\newcommand{\Rmnum}[1]{\expandafter\@slowromancap\romannumeral #1@}
\theoremstyle{definition}
\newtheorem{theorem}{Theorem}[section]
\newtheorem{lemma}{Lemma}[section]
\newtheorem{proposition}{Proposition}[section]
\newtheorem{remark}{Remark}[section]
\newtheorem{example}{Example}[section]
\newtheorem{assumption}{Assumption}[section]
\DeclareMathOperator*{\argmin}{arg\,min}
\DeclareMathOperator*{\arginf}{arg\,inf}
\numberwithin{equation}{section}
\begin{document}

\title[]{A model reduction method for solving the eigenvalue problem of semiclassical random Schr\"odinger operators}

\author{Panchi Li}
\address{Department of Mathematics, The University of Hong Kong, Hong Kong}
\email{lipch@hku.hk}
\author{Zhiwen Zhang$^{\ast}$}\thanks{*Corresponding author}
\address{Department of Mathematics, The University of Hong Kong, Hong Kong.
 Materials Innovation Institute for Life Sciences and Energy (MILES), HKU-SIRI, Shenzhen, P.R. China.}
\email[Corresponding author]{zhangzw@hku.hk}

\date{\today}

\dedicatory{}
\keywords{Eigenvalue problem, Semiclassical random Schr\"odinger operator, Proper orthogonal decomposition, multiscale model reduction method, convergence analysis.}

\subjclass[2010]{35J10, 65N25, 65D30, 65N30, 81Q05}

\maketitle

\begin{abstract}
  In this paper, we compute the eigenvalue problem (EVP) for the semiclassical random Schr\"odinger operators, where the random potentials are parameterized by an infinite series of random variables. After truncating the series, we introduce the multiscale finite element method (MsFEM) to approximate the resulting parametric EVP. We then use the quasi-Monte Carlo (qMC) method to calculate empirical statistics within a finite-dimensional random space. Furthermore, using a set of low-dimensional proper orthogonal decomposition (POD) basis functions, the referred degrees of freedoms for constructing multiscale basis are independent of the spatial mesh. Given the bounded assumption on the random potentials, we then derive and prove an error estimate for the proposed method. Finally, we conduct numerical experiments to validate the error estimate. In addition, we investigate the localization of eigenfunctions for the Schr\"odinger operator with spatially random potentials. The results show that our method provides a practical and efficient solution for simulating complex quantum systems governed by semiclassical random Schr\"odinger operators.
\end{abstract}

\section{Introduction}
The approximation of the eigenvalue problem (EVP) of the Schr\"odinger operator is a crucial computation task in quantum physics. When a spatially disordered potential is adopted, eigenfunctions may remain essentially localized in a small physical domain. A celebrated example is the Anderson localization~\cite{PhysRev.109.1492}, which has been extensively used to explain experimental observations, such as the metal-insulator transition of the cold atomic gas~\cite{PhysRevLett.101.255702,PhysRevLett.105.090601}, localization of optical~\cite{Riboli:11,Segev2013} and electromagnetic system~\cite{PhysRevLett.99.253902,doi:10.1126/science.1185080}.

In this paper, we consider the EVP as follows:
\begin{equation}
  \left(-\frac{\epsilon^2}{2}\Delta + V(\mathbf{x}, \boldsymbol{\omega})\right)\psi(\mathbf{x}, \boldsymbol{\omega}) = \lambda(\boldsymbol{\omega})\psi(\mathbf{x}, \boldsymbol{\omega})
  \label{equ:eigenvalue-problem}
\end{equation}
over a bounded convex domain $D \subset \mathds{R}^d$ ($d = 1,2,3$) with the periodic boundary condition, where $\epsilon$ is the semiclassical constant and $V(\mathbf{x}, \boldsymbol{\omega})$ is the random potential with $\boldsymbol{\omega} \in \Omega$ being the stochastic parameter in an infinity dimensional space $\Omega$. Here the differential operator $\Delta$ is with respect to the spatial variable $\mathbf{x}$.

We consider the stochastic parameter
\begin{equation*}
  \boldsymbol{\omega} = (\omega_j)_{j\in\mathds{N}} \in \Omega := [-\frac 12, \frac 12]^{\mathds{N}}
\end{equation*}
to be the infinite-dimensional vector of i.i.d. uniformly random variables on $[-\frac 12, \frac 12]$, and random potentials are bounded and admit the series expansion
\begin{equation}
  V(\mathbf{x}, \boldsymbol{\omega}) = v_0(\mathbf{x}) + \sum_{j=1}^{\infty} \omega_jv_j(\mathbf{x}),
  \label{equ:infinite-series-potential}
\end{equation}
where $v_j(\mathbf{x})$ ($j = 1, 2, \cdots$) are deterministic functions.

We are interested in the statistics of the eigenvalues and linear functionals of the eigenfunctions in the uncertainty quantification (UQ). More precisely, for the minimal eigenvalue $\lambda: \Omega \rightarrow \mathds{R}^+$, we aim to compute the expectation with respect to the countable product of uniform density, which is an infinite-dimensional integral defined as
\begin{equation}
  \mathds{E}_{\boldsymbol{\omega}}[\lambda] = \int_{\Omega}\lambda(\boldsymbol{\omega}) \mathrm{d}\boldsymbol{\omega} = \lim_{s\rightarrow \infty}\int_{[-\frac 12, \frac 12]^s}\lambda(\omega_1, \cdots, \omega_s, 0, \cdots)\mathrm{d}\omega_1\cdots \mathrm{d}\omega_s,
  \label{equ:integral-infinity-lambda}
\end{equation}
as well as the counterpart of the ground state $\psi$ to be
\begin{equation}
  \mathds{E}_{\boldsymbol{\omega}}[\mathcal{G}(\psi)] = \lim_{s\rightarrow \infty}\int_{[-\frac 12, \frac 12]^s}\mathcal{G}(\psi)(\cdot, \omega_1, \cdots, \omega_s, 0, \cdots)\mathrm{d}\omega_1\cdots \mathrm{d}\omega_s,
  \label{equ:integral-infinity-psi}
\end{equation}
where $\mathcal{G}$ is a linear functional in $L^2(D; \Omega)$. Numerically, the integrals \eqref{equ:integral-infinity-lambda} and \eqref{equ:integral-infinity-psi} are calculated with the setup $\omega_j = 0$ for $j > s$, which is consistent with the truncation of the potential \eqref{equ:infinite-series-potential}. Then the Monte Carlo (MC) and quasi-Monte Carlo (qMC) methods are employed to generate the random points in the high-dimensional random space. Using $N$ i.i.d. random points, MC method approximates an integral with $\mathcal{O}({N}^{-\frac 12})$ rate~\cite{liu2001monte}. Instead, using $N$ carefully chosen (deterministic) points (see example \cite{dick_kuo_sloan_2013,Tuffin+2004+617+628}), the convergence rate of qMC method can reach almost $\mathcal{O}(N^{-1})$.

To declare the challenge in computations of UQ for the random EVP \eqref{equ:eigenvalue-problem}, we denote $\boldsymbol{\omega}_s = (\omega_1, \cdots, \omega_s)$ and apply the parametric potential
\begin{equation}
  V(x, \boldsymbol{\omega}_s) = v_0(x) + \sigma\sum_{j=1}^s \frac{1}{j^q}\sin(j\pi x)\omega_j,
  \label{equ:random-potential-expansion-example}
\end{equation}
where $\sigma$ controls the strength of the randomness, and $q$ controls the decay rates of the components with different frequencies. We then need to resolve features with various frequencies in the parametric problem. For sufficiently large $s$, the degrees of freedom (dofs) required for the finite element method (FEM) would be significantly large, and this poses the computational burden on both the time and memory. Therefore, our primary task is to efficiently solve the EVP parameterized by \eqref{equ:random-potential-expansion-example}.

When the coefficients of EVP are parameterized by \eqref{equ:random-potential-expansion-example} with specifically chosen parameter values, such as the spatially disordered coefficients and multiscale coefficients,
reduced basis methods~\cite{fumagalli2016reduced,horger2017simultaneous,machiels2000output,pau2007reduced} were developed to decrease the computational complexity. Some recent progress includes the data-driven proper orthogonal decomposition (POD) methods for elliptic problems~\cite{BERTRAND2023112503,BERTRAND2023115696,https://doi.org/10.1002/nme.4533}, the localized orthogonal decomposition (LOD) and the super-LOD for the nonlinear Bose-Einstein condensate~\cite{doi:10.1137/130921520,doi:10.1137/22M1516300,doi:10.1137/1.9781611976458,PETERSEIM2024113097}, and the multiscale FEM (MsFEM) for the Schr\"odinger operator~\cite{CiCP-24-1073}. On the other hand, when the random Schr\"odinger operator is specifically considered, there is a novel approach to efficiently predict the eigenvalues and the localization of eigenstates by the localization landscape and effective potential~\cite{doi:10.1137/17M1156721,arnold2016effective,filoche2012universal}, in which only the homogeneous elliptic equation is solved. After that, a localized computation technique was also developed to resolve the localization of eigenfunctions with better accuracy for non-periodic potentials~\cite{altmann2019localized}. In further exploration of UQ problems, a combined approach, the qMC-FEM method, has been developed and thoroughly analyzed in~\cite{gilbert2019analysis}. Nevertheless, there is rarely work related to the model reduction method for the UQ problem of random EVPs, even though the model reduction methods for UQ problems of partial differential equations (PDEs) with random coefficients have made continuous progress recently, e.g., see~\cite{chen2017reduced,CHUNG2018606,doi:10.1137/18M1205844,doi:10.1137/110843253,doi:10.1137/130948136} and reference therein.

For the UQ problem of \eqref{equ:eigenvalue-problem}, our approach consists of several key steps. Firstly, random potentials are approximated by truncated series with the parameterization of stochastic parameters, and the qMC method is employed to generate the stochastic parameters. In the offline stage, we prepare the low-dimensional POD basis, which will be utilized to construct the multiscale basis corresponding to random potentials. Then in the online stage, we solve the EVP in an order-reduced system approximated by the multiscale basis. After that, the empirical statistics of eigenpairs are calculated. Although the multiscale basis is typically approximated using the standard FEM on the refined mesh, we emphasize that in our approach, the dofs in constructing the multiscale basis only rely on the dimensions of the POD basis, and are set to be $3$ in our implementations.

The approximation error of the proposed method, dubbed the MsFEM-POD method, for the EVP of random Schr\"odinger operator is a combined form that simultaneously depends on the truncated dimension $s$, the coarse mesh size $H$, the number of qMC samples $N$ and the POD error $\rho$. In particular, it exhibits the superconvergence rates with respect to $H$ in the physical space. Hence, we first prove the error bounds (\Cref{thm:convergence-rate-MsFEM}) for the multiscale solution $\lambda_{ms}$ and $\psi_{ms}$ as
\begin{equation}
    \|\psi_{ms} - \psi\|_1 \leq CH^3, \quad \|\psi_{ms} - \psi\| \leq CH^4,
    \label{equ:estimate-MsFEM-exact-eigenfunctions}
  \end{equation}
and
  \begin{equation}
    |\lambda_{ms} - \lambda| \leq CH^6,
    \label{equ:estimate-MsFEM-exact-eigenvalues}
  \end{equation}
where $\lambda$ and $\psi$ are the minimal eigenvalue and ground state, respectively. Throughout this paper, we use $(\cdot, \cdot)$ to denote the inner product in $L^2(D)$, then $\|\cdot\|$ and $\|\cdot\|_r$ ($r = 1, 2$) denote the norm in  $L^2(D)$ and $H^r(D)$ sense, respectively. In addition, we denote $H_P^1(D) = \{ v | v \in H^1(D), \text{and $v$ is periodic over $D$} \}$.

As random potentials are further considered, the corresponding multiscale basis is approximated by the POD basis. Hence, two classes of optimal problems will be repeatedly referred to hereafter in which one has been extensively used in prior studies with the dofs depending on the fine mesh, and the other one is proposed here with the referred dofs relying on the POD basis. Let $\phi_i(\mathbf{x}, \boldsymbol{\omega})$ be the reference basis function obtained by solving the original optimal problems. Then for the multiscale basis $\hat{\phi}_i(\mathbf{x}, \boldsymbol{\omega})$ approximated by the POD basis, the error bound is
\begin{equation}
    \|\phi_i(\mathbf{x}, \boldsymbol{\omega}) - \hat{\phi}_i(\mathbf{x}, \boldsymbol{\omega})\| \leq C\sqrt{\rho},
\end{equation}
where $i = 1, \cdots, N_H$, and $C$ is a constant independent of the stochastic parameter $\boldsymbol{\omega}$ and $i$. Consequently, the estimates \eqref{equ:estimate-MsFEM-exact-eigenfunctions} and \eqref{equ:estimate-MsFEM-exact-eigenvalues} are updated with an inclusion of the POD error $\rho$; for the detail see \Cref{thm:error-MsFEM-POD}.

The total error of the proposed for the UQ of EVP \eqref{equ:eigenvalue-problem} is therefore
\begin{equation}
  \sqrt{\mathds{E}_{\boldsymbol{\Delta}}\left[ |\mathds{E}_{\boldsymbol{\omega}}[\lambda] - Q_{N, s}\lambda_{s,ms}^{pod}|^2 \right]} \leq C\left( H^6 + \sqrt{\rho} + s^{-2/p+1} + N^{-\alpha} \right),
\end{equation}
where $\alpha = \min\{ 1-\delta, 1/p - 1/2 \}$ for arbitrary $\delta \in (0, 1/2)$. This result, presented in its complete form in \Cref{thm:total-error}, is given with similar results for the linear functional of the eigenfunctions.
Compared to the qMC-FEM provided in \cite{gilbert2019analysis}, by leveraging low-dimensional approximations and constructing reduced basis functions, our approach significantly reduces computational costs while maintaining high accuracy.

At the end of this paper, we conduct numerical experiments to validate the theoretical error estimate and the advantage of the efficiency of the model reduction method. Furthermore, we investigate the localization of eigenfunctions for spatially random potentials in 1D and 2D problems. An important observation is that for parameterized potentials possessing non-decaying amplitudes of high-frequency components $(q = 0)$, it strictly requires the coarse mesh size such that $H < \epsilon$. on the contrary, no such stringent constraint is needed for parameterized potentials with $q > 1$. These results showcase that our approach offers a practical and efficient solution for simulating complex quantum systems governed by semiclassical random Schr\"odinger operators.

The paper is organized as follows. We first give some useful preliminaries in Section \ref{sec:preliminaries}. Numerical algorithms are detailed in Section \ref{sec:algorithms}. The regularity of the minimal eigenvalue and ground state with respect to the stochastic parameter is analyzed in Section \ref{sec:parameteric-regularity}. The convergence analysis is given in Section \ref{sec:error-analysis}. Some experimental results are in Section \ref{sec:experiments}. Conclusions are drawn in Section \ref{sec:conclusions}.

\section{Preliminaries on the semiclassical Schr\"odinger operator with random potentials}
\label{sec:preliminaries}

Let $\hat{H}_{\omega} = -\frac{\epsilon^2}{2}\Delta + V(\mathbf{x}, \boldsymbol{\omega})$ be the random Hamiltonian operator. The solutions of \eqref{equ:eigenvalue-problem} given by $(\lambda_k, \psi_k)$ are the eigenpairs of $\hat{H}_{\omega}$, which satisfy the random weak form
\begin{equation}
  \frac{\epsilon^2}{2}\int_{D}\nabla\psi(\mathbf{x}, \boldsymbol{\omega})\nabla\phi(\mathbf{x}) \mathrm{d}\mathbf{x} + \int_D V(\mathbf{x}, \boldsymbol{\omega})\psi(\mathbf{x}, \boldsymbol{\omega})\phi(\mathbf{x}) \mathrm{d}\mathbf{x} = \lambda(\boldsymbol{\omega})\int_{D}\psi(\mathbf{x}, \boldsymbol{\omega})\phi(\mathbf{x}) \mathrm{d}\mathbf{x}.
\end{equation}
Denote the symmetric bilinear forms $\mathcal{A}(\boldsymbol{\omega}; \cdot, \cdot): H_P^1(D) \times H_P^1(D) \rightarrow \mathds{R}$ by
\begin{equation}
  \mathcal{A}(\boldsymbol{\omega}; \psi, \phi) = \frac{\epsilon^2}{2}\int_D\nabla \psi(\mathbf{x})\cdot\nabla \phi(\mathbf{x})\mathrm{d}\mathbf{x} + \int_D V(\mathbf{x}, \boldsymbol{\omega})\psi(\mathbf{x}) \phi(\mathbf{x})\mathrm{d}\mathbf{x}.
\end{equation}
Then for each $\boldsymbol{\omega} \in \Omega$, we find $\psi(\boldsymbol{\omega}) \in H_P^1(D)$ and $\lambda(\boldsymbol{\omega}) \in \mathds{R}$ such that
\begin{align}
    &\mathcal{A}(\boldsymbol{\omega}; \psi(\boldsymbol{\omega}), \phi)= \lambda(\boldsymbol{\omega})(\psi(\boldsymbol{\omega}), \phi), \qquad \forall\phi \in H_P^1(D) \label{equ:weak-form-random-EVP}
\end{align}
with a normalization constraint $\|\psi(\boldsymbol{\omega})\|= 1$.

In quantum systems, a crucial task involves identifying the minimum eigenvalue and its corresponding eigenfunction, commonly known as the ground state. We define the energy functional
\begin{equation}
  E(\phi) = \frac 12\int_{D} \frac{\epsilon^2}{2}|\nabla\phi|^2 + V(\mathbf{x}, \boldsymbol{\omega})\phi^2 \mathrm{d}\mathbf{x}.
\end{equation}
Then the ground state $\psi$ of the system is characterized as the minimizer of this energy functional, subject to the normalization constraint $\|\psi \| = 1$, i.e.,
\begin{equation}
  E(\psi) = \inf_{\|\phi\| = 1} E(\phi).
\end{equation}

We will refer to the eigenvalues of $-\Delta$ equipped with the periodic boundary condition. They are strictly positive and counting multiplicities. We denote them by
\begin{equation}
  0 < \chi_1 < \chi_2 < \cdots.
  \label{equ:eigenvalues-Laplacian}
\end{equation}
Assume random potentials are uniformly bounded with $V_{\mathrm{max}} \geq V(\mathbf{x}, \boldsymbol{\omega}) \geq V_{\mathrm{min}} \geq 0$ but $V(\mathbf{x}, \boldsymbol{\omega}) \not\equiv 0$, and we easily get the coercivity and boundedness of the bilinear form $\mathcal{A}(\boldsymbol{\omega}; \cdot, \cdot)$, which is uniform with respect to the stochastic parameter $\boldsymbol{\omega}$, i.e.,
\begin{align}
  \mathcal{A}(\boldsymbol{\omega}; v, v) &\ge c_1\|v\|_1^2, \qquad \text{for all }v \in H_P^1(D) \label{equ:coercive-A}\\
  \mathcal{A}(\boldsymbol{\omega}; u, v) &\le c_2\|u\|_1\|v\|_1, \qquad \text{for all }u, v \in H_P^1(D).\label{equ:upper-bounded-A}
\end{align}
To establish \eqref{equ:upper-bounded-A}, we use the upper bound of potentials and the Poincar\'e inequality
\begin{equation}
  \label{equ:Poincare-inequality}
  \|v\| \leq \chi_1^{-1/2}\|v\|_1, \quad \text{ for } v \in H_P^1(D).
\end{equation}
And we also have $c_2 = V_{\mathrm{max}} (1 + \epsilon^2/(2\chi_1))$.

Since the Hamiltonian operator $\hat{H}_{\omega}$ is self-adjoint and $V_{\mathrm{min}} \geq 0$, the EVP has countable-many eigenvalues $(\lambda_{k}(\boldsymbol{\omega}))_{k\in\mathds{N}}$. They are positive, have finite multiplicity, and accumulate only at infinity. We write them as
\begin{equation*}
  0 < \lambda_1(\boldsymbol{\omega}) \leq \lambda_2(\boldsymbol{\omega}) \leq \cdots
\end{equation*}
with $\lambda_k(\boldsymbol{\omega}) \rightarrow \infty$ as $k \rightarrow \infty$. For the eigenvalue $\lambda(\boldsymbol{\omega})$ we define the corresponding eigenspace
\begin{equation*}
  E(\boldsymbol{\omega}, \lambda(\boldsymbol{\omega})) := \{ \psi | \psi \text{ is an eigenfunction corresponding to }\lambda(\boldsymbol{\omega}) \}.
\end{equation*}
And for the minimal eigenvalue $\lambda_1(\boldsymbol{\omega})$, we have the following coercive-type estimate.
\begin{lemma}[\cite{gilbert2019analysis}, Lemma 3.1]
  For all $\boldsymbol{\omega} \in \Omega$ and $\lambda \in \mathds{R}$, define $\mathcal{A}_{\lambda}(\boldsymbol{\omega}; \cdot, \cdot): H_P^1(D)\times H_P^1(D) \rightarrow \mathds{R}$ to be the shifted bilinear form
  \begin{equation}
    \mathcal{A}_{\lambda}(\boldsymbol{\omega}; u, v) = \mathcal{A}(\boldsymbol{\omega}; u, v) - \lambda(u,v).
  \end{equation}
  Restricted to the $L^2$-orthogonal complement of the eigenspace corresponding to $\lambda_1(\boldsymbol{\omega})$, denoted by $E(\boldsymbol{\omega}, \lambda_1(\boldsymbol{\omega}))^{\bot}$, the $\lambda_1(\boldsymbol{\omega})$-shifted bilinear form is uniformly coercive in $\boldsymbol{\omega}$, i.e., there exists a constant $C_{gap}$ such that
  \begin{equation}
    \mathcal{A}_{\lambda_1}(\boldsymbol{\omega}; u, u) \geq C_{gap}\|u\|_1^2 \text{ for all } u \in E(\boldsymbol{\omega}, \lambda_1(\boldsymbol{\omega}))^{\bot}.
    \label{equ:coercive-A-lambda}
  \end{equation}
  \label{lem:coercive-A-lambda}
\end{lemma}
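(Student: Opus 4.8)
The plan is to establish the claim in two stages: first prove the shifted form is coercive in the weaker $L^2$ sense using the spectral (min--max) characterization of the eigenvalues, and then bootstrap this to the required $H^1$ coercivity by forming a convex combination with the uniform coercivity \eqref{equ:coercive-A} of the unshifted form $\mathcal{A}$; throughout the delicate issue is uniformity in $\boldsymbol{\omega}$.

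First I would record the variational gap estimate. Since $\hat{H}_{\boldsymbol{\omega}}$ is self-adjoint with discrete spectrum $0 < \lambda_1(\boldsymbol{\omega}) \le \lambda_2(\boldsymbol{\omega}) \le \cdots$ and a simple ground state, any $u \in E(\boldsymbol{\omega}, \lambda_1(\boldsymbol{\omega}))^{\bot}$ expands in the eigenbasis using only modes whose eigenvalue is at least $\lambda_2(\boldsymbol{\omega})$. Writing $u = \sum_{k\ge 2} c_k \psi_k(\cdot, \boldsymbol{\omega})$ and using $\mathcal{A}(\boldsymbol{\omega}; \psi_k, \psi_\ell) = \lambda_k \delta_{k\ell}$ yields
\[
  \mathcal{A}(\boldsymbol{\omega}; u, u) = \sum_{k\ge 2}\lambda_k c_k^2 \ge \lambda_2(\boldsymbol{\omega})\sum_{k\ge 2} c_k^2 = \lambda_2(\boldsymbol{\omega})\|u\|^2,
\]
equivalently $\mathcal{A}_{\lambda_1}(\boldsymbol{\omega}; u, u) \ge (\lambda_2(\boldsymbol{\omega}) - \lambda_1(\boldsymbol{\omega}))\|u\|^2$. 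This is the $L^2$-type gap, but on its own it controls only $\|u\|$ rather than $\|u\|_1$.

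Next I would upgrade to $H^1$. For a parameter $\theta \in (0,1)$ I split $\mathcal{A} = \theta\mathcal{A} + (1-\theta)\mathcal{A}$, applying the coercivity \eqref{equ:coercive-A} to the first piece and the gap estimate to the second:
\[
  \mathcal{A}_{\lambda_1}(\boldsymbol{\omega}; u, u) = \theta\mathcal{A}(\boldsymbol{\omega}; u, u) + \bigl[(1-\theta)\mathcal{A}(\boldsymbol{\omega}; u, u) - \lambda_1\|u\|^2\bigr] \ge \theta c_1\|u\|_1^2 + \bigl[(1-\theta)\lambda_2 - \lambda_1\bigr]\|u\|^2.
\]
Choosing $\theta = (\lambda_2 - \lambda_1)/\lambda_2 \in (0,1)$ (admissible since $0 < \lambda_1 < \lambda_2$) annihilates the bracketed $L^2$ coefficient and leaves $\mathcal{A}_{\lambda_1}(\boldsymbol{\omega}; u, u) \ge c_1\,\tfrac{\lambda_2 - \lambda_1}{\lambda_2}\|u\|_1^2$.

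The remaining and, I expect, genuinely delicate point is that the prefactor must be bounded below \emph{uniformly} in $\boldsymbol{\omega}$ so that a single $C_{gap}$ serves for all parameters. Here I would invoke the uniform bound $V_{\mathrm{min}} \le V(\cdot, \boldsymbol{\omega}) \le V_{\mathrm{max}}$: comparing Rayleigh quotients with those of $-\tfrac{\epsilon^2}{2}\Delta$ and using the min--max principle gives $\tfrac{\epsilon^2}{2}\chi_k + V_{\mathrm{min}} \le \lambda_k(\boldsymbol{\omega}) \le \tfrac{\epsilon^2}{2}\chi_k + V_{\mathrm{max}}$ for every $k$, so $\lambda_2(\boldsymbol{\omega})$ is uniformly bounded above and it suffices to secure a uniform lower bound on the gap $\lambda_2(\boldsymbol{\omega}) - \lambda_1(\boldsymbol{\omega})$. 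The main obstacle is exactly this: the crude two-sided bounds only yield $\lambda_2 - \lambda_1 \ge \tfrac{\epsilon^2}{2}(\chi_2 - \chi_1) - (V_{\mathrm{max}} - V_{\mathrm{min}})$, which need not be positive at large disorder strength, so the uniform gap must instead come from the standing simplicity of the ground state for every $\boldsymbol{\omega}$ combined with a continuity/compactness argument over the parameter domain $\Omega$ (or an a priori structural hypothesis that the gap survives the perturbation). Setting $C_{gap} = c_1 \inf_{\boldsymbol{\omega}\in\Omega} (\lambda_2(\boldsymbol{\omega}) - \lambda_1(\boldsymbol{\omega}))/\lambda_2(\boldsymbol{\omega}) > 0$ then completes the argument.
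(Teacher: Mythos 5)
Your argument is correct and is essentially the proof given in the cited source \cite{gilbert2019analysis}; the paper itself does not prove this lemma but imports it by reference. The $L^2$ gap via the spectral expansion on $E(\boldsymbol{\omega},\lambda_1(\boldsymbol{\omega}))^{\bot}$, the convex split $\mathcal{A}=\theta\mathcal{A}+(1-\theta)\mathcal{A}$ with $\theta=(\lambda_2-\lambda_1)/\lambda_2$, and the reduction of uniformity to a uniform lower bound on the spectral gap are all exactly the steps of that proof. The one piece you sketch rather than prove --- the uniform positivity of $\inf_{\boldsymbol{\omega}}(\lambda_2(\boldsymbol{\omega})-\lambda_1(\boldsymbol{\omega}))$ --- is indeed the substantive content: it follows from simplicity of the ground state for each $\boldsymbol{\omega}$ (positivity of the ground state / Krein--Rutman) together with Lipschitz continuity of $\boldsymbol{\omega}\mapsto\lambda_k(\boldsymbol{\omega})$ and compactness of $\Omega=[-\tfrac12,\tfrac12]^{\mathds{N}}$ in the product topology, where the continuity is where the summability assumption on $\|v_j\|_{L^\infty}$ in \Cref{assump:assumption-for-potentials} enters; you correctly flag that the crude two-sided eigenvalue bounds alone cannot deliver this.
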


Furthermore, according to the min-max principle, the $k$th eigenvalue is to be a minimum over all the subspace $S_k \subset H_P^1(D)$:
\begin{equation}
  \lambda_k(\boldsymbol{\omega}) = \min_{S_k \subset H_P^1(D)}\max_{ 0 \neq u \in S_k } \frac{\mathcal{A}(\boldsymbol{\omega}; u, u)}{(u, u)},
\end{equation}
where $\mathrm{dim}(S_k) = k$. It can be equivalently written as
\begin{equation}
  \lambda_k(\boldsymbol{\omega}) = \min_{S_k \subset H_P^1(D)}\max_{ \substack{u \in S_k, \\ \|u\| = 1}} \mathcal{A}(\boldsymbol{\omega}; u, u),
\end{equation}
Consequently, we obtain the bound of the $k$th eigenvalue
\begin{align*}
  \lambda_{k}(\boldsymbol{\omega}) \ge c_1 \min_{S_k \subset H_P^1(D)}\max_{  \substack{u \in S_k, \\ \|u\| = 1} } \|\nabla u\|^2, \quad
  \lambda_{k}(\boldsymbol{\omega}) \le c_2 \min_{S_k \subset H_P^1(D)}\max_{  \substack{u \in S_k, \\ \|u\| = 1} } \|u\|_1^2.
\end{align*}
Using the $k$th eigenvalue of the Laplacian operator, we get the bounds of $\lambda_k(\boldsymbol{\omega})$ as
\begin{equation}
  \underline{\lambda_k} := c_1\chi_k \leq \lambda_k(\boldsymbol{\omega}) \leq c_2(\chi_k + 1) := \overline{\lambda_k}.
  \label{equ:bound-of-eigenvalue}
\end{equation}
Furthermore, since $\lambda_k(\boldsymbol{\omega}) = \mathcal{A}(\boldsymbol{\omega}; \psi_k(\boldsymbol{\omega}), \psi_k(\boldsymbol{\omega}))$, the estimate of the corresponding eigenfunction satisfies
\begin{equation}
  \|\psi_k(\boldsymbol{\omega})\|_1 \leq \sqrt{\lambda_k(\boldsymbol{\omega})/c_1} \leq \sqrt{c_2(\chi_k + 1)/c_1} := \overline{\psi_k}.
\end{equation}

\section{Numerical approximations}
\label{sec:algorithms}

\subsection{Stochastic dimension truncation}
As defined in \eqref{equ:infinite-series-potential}, the random potential $V(\mathbf{x}, \boldsymbol{\omega})$ is assumed to be an infinite series expansion.
We first truncate the infinite-dimensional problem into a $s$-dimensional problem by setting $\omega_j = 0$ for $j > s$. Denote $\boldsymbol{\omega}_s = (\omega_1, \cdots, \omega_s)$, and the random potential is truncated as
\begin{equation}
  V(\mathbf{x}, \boldsymbol{\omega}_s) = v_0 + \sum_{j=1}^s\omega_jv_j(\mathbf{x}).
\end{equation}
We then deduce a truncated symmetric bilinear form
\begin{equation*}
  \mathcal{A}_s(\boldsymbol{\omega}; u, v) = \int_D \frac{\epsilon^2}{2}\nabla u(\mathbf{x}) \cdot \nabla v(\mathbf{x}) + V(\mathbf{x}, \boldsymbol{\omega}_s) u(\mathbf{x}) v(\mathbf{x}) \mathrm{d}\mathbf{x}.
\end{equation*}
The corresponding eigenpairs $(\lambda_s(\boldsymbol{\omega}), \psi_s(\boldsymbol{\omega}))$ satisfy the parametric EVP
\begin{equation}
  \mathcal{A}_s(\boldsymbol{\omega}; \psi_s(\boldsymbol{\omega}), v) = \lambda_s(\boldsymbol{\omega})(\psi_s(\boldsymbol{\omega}), v) \qquad \text{ for all } v\in H_P^1(D)
\end{equation}
with $\|\psi_s(\boldsymbol{\omega})\| = 1$.

\subsection{MsFEM approximation}
For clarity, we consider the deterministic potential $v_0 := v_0(\mathbf{x})$ and the corresponding weak form
\begin{equation}
 a(\psi, \phi) := \frac{\epsilon^2}{2}(\nabla\psi, \nabla\phi) + (v_0\psi, \phi) = \lambda(\psi, \phi), \qquad \forall\; \phi \in H_P^1(D).
 \label{equ:weak-from}
\end{equation}

For the MsFEM, the FE basis on a coarse mesh with mesh size $H$ and the refined mesh with mesh size $h$ are required simultaneously. We consider the regular mesh $\mathcal{T}_H$ of ${D}$ and the standard $P_1$ FE space on the mesh $\mathcal{T}_H$
\begin{equation*}
  P_1(\mathcal{T}_H) = \{v\in L^2(\bar{D})| \text{ for all } K\in\mathcal{T}_H, v|_K \text{ is a polynomial of total degree} \leq 1\}.
\end{equation*}
Then the corresponding $H^1_P({D})$-confirming FE spaces are $V_h = P_1(\mathcal{T}_h)\cap H^1_P({D})$ and $V_H = P_1(\mathcal{T}_H)\cap H^1_P({D})$.

The multiscale basis functions are obtained by solving the optimal problems
\begin{align}
  \phi_i = &\argmin_{\phi\in H^1_P(D)} a(\phi, \phi),\label{equ:optimal-problem-objective}\\
  \text{s.t.}&\int_{D}\phi\phi_j^H\mathrm{d}\mathbf{x} = \alpha\delta_{ij}, \quad \forall\; 1\leq j \leq N_H,
  \label{equ:optimal-problem}
\end{align}
where $\phi_j^H \in V_H$ and $\alpha = (1, \phi_j^H)$. Here $\alpha$ is a factor to eliminate the dependence of basis functions on the mesh size, which has been elucidated by the Cl\'ement-type quasi-interpolation operator \cite{LIZHANG2024MsFEMNLSE}. Define the patches $\{D_{\ell}\}$ associated with $\mathbf{x}_i \in \mathcal{N}_H$
\begin{align*}
  &D_0(\mathbf{x}_i) := \mathrm{supp}\{\phi_i\} = \cup\{K\in\mathcal{T}_H\;|\; \mathbf{x}_i\in K\}, \\
  &D_{\ell} := \cup\{K\in\mathcal{T}_H\;|\;K\cap\overline{D_{\ell-1}}\neq \emptyset\},\quad \ell = 1,2,\cdots.
\end{align*}
The multiscale basis functions decay exponentially over the domain $D$; see the Theorem 4.2 in \cite{CiCP-24-1073}.


In this numerical framework, three fundamental assumptions on potentials are required.
\begin{assumption}
  \label{assump:assumption-for-potentials}
  \begin{enumerate}
    \item For the potential in the random Schr\"odinger operator, we assume $\|V\|_{L^{\infty}(D; \Omega)} = V_{max} < \infty$ and $H\sqrt{V_{max}}/\epsilon \lesssim 1$.
    \item For some $0 < p < 1$, it holds $\sum_{j=1}^{\infty}\|v_j\|_{L^{\infty}}^p < \infty$.
    \item $v_j \in W^{1, \infty}(D)$ for $j \ge 0$ and $\sum_{j=1}^{\infty} \|v_j\|_{W^{1, \infty}(D)} < \infty$.
  \end{enumerate}
\end{assumption}

The first assumption gives a necessary condition to the optimal problems \eqref{equ:optimal-problem-objective}-\eqref{equ:optimal-problem}. And the others ensure that the parameterized EVP is well-posed.

On the refined mesh, the multiscale basis functions are expressed as
\begin{equation}
  \phi_i = \sum_{k=1}^{N_h}c_k^i\phi_k^h,
  \label{equ:multiscal-basis-function-fine-mesh}
\end{equation}
where $i$ traverses all the coarse grid nodes. The eigenfunction is therefore approximated by $\psi_{ms} = \sum_{i=1}^{N_H}u_i\phi_i$ in the space $V_{ms} = span\{\phi_1, \cdots, \phi_{N_H}\}$, and the corresponding discretized equations are
\begin{equation*}
  \frac{\epsilon^2}{2}\sum_{i=1}^{N_H}(\nabla\phi_i, \nabla\phi_j)u_i + \sum_{i=1}^{N_H}(v_0\phi_i, \phi_j)u_i = \lambda \sum_{i=1}^{N_H}(\phi_i, \phi_j)
\end{equation*}
with $j = 1, \cdots, N_H$.
Denote the matrices $M^h = [M_{ij}^h]$ with $M_{ij}^h = (\phi_i^h, \phi_j^h)$, $S^h = [S_{ij}^h]$ with $S_{ij}^h = (\nabla\phi_i^h, \nabla\phi_j^h)$, $V_{ij}^h = (v_0\phi_i^h, \phi_j^h)$, $A = [A_{ij}]$ with $A_{ij} = (\phi_i^H, \phi_j^h)$, $C = [C_{ij}]$ with $C_{ij} = c_i^j$. The coefficients in multiscale basis function \eqref{equ:multiscal-basis-function-fine-mesh} are solved from the equality-constrained quadratic programming
\begin{equation}
  \left\{\begin{aligned}
    \min\; &C^T G C \\
    s.t. \; &AC = \alpha I,
  \end{aligned}\right.
\end{equation}
where $G = \frac{\epsilon^2}{2}S^h + V^h$ and $I$ is the unit matrix with size of $N_H\times N_H$.

With the random potential further considered, the direct combination of the MsFEM and qMC method is outlined as the following algorithm.
\begin{algorithm}[ht]
  \caption{The qMC-MsFEM for the EVP of the random Schr\"odinger operator.}
  \label{alg:MsFEM-deterministic-EVP}
  \begin{algorithmic}[1]
    \REQUIRE Stochastic samples $\{{\omega}^j\}_{j=1}^N$, coarse mesh $\mathcal{T}_H$, refined mesh $\mathcal{T}_h$
    \ENSURE Expectation of eigenpairs $(\mathds{E}(\lambda_{ms}), \mathds{E}(\psi_{ms}))$
    \FOR{each $j \in [1, N]$}
    \STATE Solve optimal problems \eqref{equ:optimal-problem-objective}-\eqref{equ:optimal-problem} and construct multiscale basis $\{\phi_i({\omega}^j)\}_{i=1}^{N_H}$;
    \STATE Find $\lambda_{ms}({\omega}^j) \in \mathds{R}^{+}$ and $\psi_{ms}({\omega}^j) \in V_{ms} := span\{\phi_i({\omega}^j)\}_{i=1}^{N_H}$ such that
    \begin{equation}
      \mathcal{A}({\omega}^j; \psi_{ms}({\omega}^j), \phi_{ms}) = \lambda_{ms}({\omega}^j) (\psi_{ms}({\omega}^j), \phi_{ms}), \quad\forall \phi_{ms}\in V_{ms}.
      \label{equ:numerical-solution-msfem}
    \end{equation}
    \ENDFOR
    \STATE Compute the expectation $(\mathds{E}(\lambda_{ms}), \mathds{E}(\psi_{ms}))$;
  \end{algorithmic}
\end{algorithm}

\subsection{A POD reduction method}
In \Cref{alg:MsFEM-deterministic-EVP}, the construction of the multiscale basis is repeated for all realizations of the random potential. In the worst case, the dofs of each optimal problem are $N_h$. This takes the computational burden for computations. Here we propose a POD reduction method to construct the multiscale basis, where the dofs involved are independent of the spatial partitions.

Before the formal algorithm is given, we briefly review the POD method. Let $X$ be a Hilbert space equipped with the inner product $(\cdot, \cdot)_{X}$ and norm $\|\cdot\|_X$. For $u_1, \cdots, u_n \in X$ we refer to $\mathcal{V} = span\{u_1, \cdots, u_n\}$ as ensemble consisting of the snapshots $\{u_j\}_{j=1}^n$.  Let $\{\varphi_k\}_{k=1}^{m}$ be an orthonormal basis of $\mathcal{V}$ with $m = \dim\mathcal{V}$. Then the snapshots can be expressed into $u_j = \sum_{k=1}^m (u_j, \varphi_k)_X\varphi_k$
for $j = 1, \cdots, n$.
The method consists of choosing the orthonormal basis such that for every $\ell \in \{1, \cdots, m\}$ the following mean square error is minimized
\begin{equation}
  \begin{aligned}
    &\min_{\{\varphi_k\}_{k=1}^{\ell}} \frac 1n\sum_{j=1}^n \left\| u_j - \sum_{k=1}^{\ell}(u_j, \varphi_k)_X \varphi_k \right\|_X^2 \\
    &\quad \mathrm{s. t. } \quad  (\varphi_i, \varphi_j)_X = \delta_{ij}, \text{ for } 1 \leq i, j \leq \ell.
  \end{aligned}
\end{equation}
A solution $\{\varphi_k\}_{k=1}^{\ell}$ is called a POD basis of rank $\ell$.

Define the correlation matrix $K = [K_{ij}]$ with $K_{ij} = \frac 1n(u_i, u_j)_X$.
It is positive semi-definite and has rank $m$. Let $\sigma_1 \ge \cdots \ge \sigma_m > 0$ denote positive eigenvalues of $K$ and $v_1, \cdots, v_m$ denote eigenvectors. Then a POD basis is given by
\begin{equation*}
  \varphi_k = \frac{1}{\sqrt{\sigma_k}}\sum_{j=1}^n (v_k)_j u_j,
\end{equation*}
where $(v_k)_j$ is the $j$-th component of the eigenvector $v_k$. 
Consequently, the POD approximation is described by the proposition as follows.
\begin{proposition}[\cite{benner2015survey,Holmes_Lumley_Berkooz_1996}]
  \label{prop:POD-approximation}
  For all of the snapshots, the approximation error of the POD basis with dimension $m$ satisfies
  \begin{equation}
    \frac{\sum_{j=1}^n \left\| u_j - \sum_{k=1}^{\ell}(u_j, \varphi_k)_X\varphi_k \right\|_X^2}{\sum_{j=1}^n \left\| u_j \right\|_X^2} = \frac{\sum_{k=\ell + 1}^m\sigma_k}{\sum_{k=1}^m\sigma_k}.
  \end{equation}
\end{proposition}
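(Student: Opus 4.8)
The plan is to exploit the orthonormality of the POD basis $\{\varphi_k\}_{k=1}^m$ to turn the projection error into a clean sum over the discarded modes, and then to evaluate that sum through the eigenstructure of the correlation matrix $K$. First I would observe that, since $\{\varphi_k\}_{k=1}^m$ is an orthonormal basis of $\mathcal{V}$ and each snapshot admits the expansion $u_j = \sum_{k=1}^m (u_j,\varphi_k)_X \varphi_k$, the rank-$\ell$ truncation error is exactly the tail of this expansion, $u_j - \sum_{k=1}^\ell (u_j,\varphi_k)_X\varphi_k = \sum_{k=\ell+1}^m (u_j,\varphi_k)_X\varphi_k$. By Pythagoras (Parseval), its squared norm is $\sum_{k=\ell+1}^m |(u_j,\varphi_k)_X|^2$, so after summing over $j$ and interchanging the two finite sums the numerator becomes $\sum_{k=\ell+1}^m \sum_{j=1}^n |(u_j,\varphi_k)_X|^2$.

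The heart of the argument is the identity $\sum_{j=1}^n |(u_j,\varphi_k)_X|^2 = c\,\sigma_k$ with a constant $c$ independent of $k$. To get it I would substitute the explicit form $\varphi_k = \sigma_k^{-1/2}\sum_i (v_k)_i u_i$ into $(u_j,\varphi_k)_X$, use $K_{ij} = \tfrac1n (u_i,u_j)_X$ to rewrite $(u_j,\varphi_k)_X$ in terms of the matrix--vector product $(K v_k)_j$, and then invoke the eigenrelation $K v_k = \sigma_k v_k$. This collapses $(u_j,\varphi_k)_X$ to a multiple of $\sqrt{\sigma_k}\,(v_k)_j$; squaring, summing over $j$, and using that $v_k$ is a unit eigenvector ($\sum_j (v_k)_j^2 = 1$) leaves exactly $c\,\sigma_k$. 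Hence the numerator equals $c\sum_{k=\ell+1}^m \sigma_k$.

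For the denominator I would apply the same Parseval identity in full: $\|u_j\|_X^2 = \sum_{k=1}^m |(u_j,\varphi_k)_X|^2$, because the $\varphi_k$ span $\mathcal{V}\ni u_j$, so $\sum_{j=1}^n \|u_j\|_X^2 = \sum_{k=1}^m \sum_{j=1}^n |(u_j,\varphi_k)_X|^2 = c\sum_{k=1}^m \sigma_k$ with the very same constant $c$. Forming the quotient, the factor $c$ cancels and yields the claimed ratio $\big(\sum_{k=\ell+1}^m \sigma_k\big)\big/\big(\sum_{k=1}^m \sigma_k\big)$. The step I expect to require the most care is the bookkeeping of normalization constants: the factor $1/n$ in the definition of $K$ together with the scaling $\sigma_k^{-1/2}$ in $\varphi_k$ must be tracked so that the proportionality constant $c$ is genuinely the same in numerator and denominator (equivalently, one must verify that the stated $\varphi_k$ are truly orthonormal, on which the Pythagorean reductions rest); once that normalization is pinned down, the eigenrelation does all the remaining work.
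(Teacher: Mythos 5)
The paper does not prove this proposition at all---it is quoted from the literature \cite{benner2015survey,Holmes_Lumley_Berkooz_1996}---so there is no in-paper argument to compare against; your proof is the standard one and it is correct. The chain Parseval $\rightarrow$ interchange of sums $\rightarrow$ eigenrelation $Kv_k=\sigma_k v_k$ $\rightarrow$ $\sum_j|(u_j,\varphi_k)_X|^2=c\,\sigma_k$ with $c$ independent of $k$ is exactly what is needed, and the ratio is insensitive to the overall constant $c$. One remark on the point you rightly flag as delicate: with the paper's stated conventions ($K_{ij}=\tfrac1n(u_i,u_j)_X$ together with $\varphi_k=\sigma_k^{-1/2}\sum_j(v_k)_ju_j$) one computes $(\varphi_k,\varphi_l)_X=n\,\delta_{kl}$, so these $\varphi_k$ are orthogonal but have norm $\sqrt{n}$, not $1$; either the factor $1/n$ must be dropped from $K$ or a factor $1/\sqrt{n}$ inserted into $\varphi_k$ for the Parseval expansions $u_j=\sum_k(u_j,\varphi_k)_X\varphi_k$ to be literally valid. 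Once that normalization is made consistent, your computation gives $\sum_j|(u_j,\varphi_k)_X|^2=c\,\sigma_k$ with the same $c$ in numerator and denominator, and the claimed identity follows.
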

In the POD method, $\ell (\ll n)$ is typically determined such that
\begin{equation}
  \frac{\sum_{k=\ell + 1}^m\sigma_k}{\sum_{1}^m\sigma_k} < \rho,
\end{equation}
where $\rho$ is a user-specified tolerance, often taken to be 0.1\% or less.

When the POD method is employed, we let $\{ V(\mathbf{x}, \omega^j) \}_{j = 1}^Q$ be the parameterized potentials with $Q$ the number of samples. Solve the optimal problem \eqref{equ:optimal-problem-objective}-\eqref{equ:optimal-problem} and we obtain random multiscale functions $\phi_i(\mathbf{x}, \omega^j)$, where $i = 1, \cdots, N_H$ and $j = 1, \cdots, Q$. At $\mathbf{x}_i$, $\zeta_i^0 = \frac 1{Q}\sum_{j=1}^Q \phi_i(\mathbf{x}, \omega^j)$ is the mean of random multiscale functions, and $\tilde{\phi}_i(\mathbf{x}, \omega^j) = \phi_i(\mathbf{x}, \omega^j) - \zeta_i^0$ are fluctuations. For each $i$, employ the POD method to $\{\tilde{\phi}_i(\mathbf{x}, \omega^j)\}_{j=1}^Q$ and build the order-reduced set $\{\zeta_i^1(\mathbf{x}), \cdots, \zeta_i^{m_i}(\mathbf{x})\}$ with $m_i \ll Q$.

Then, for a stochastic sample $\boldsymbol{\omega}$, the multiscale basis can be approximated as
\begin{equation}
  \phi_i(\mathbf{x}, \boldsymbol{\omega}) \approx \sum_{l = 0}^{m_i} c_i^l(\boldsymbol{\omega}) \zeta_i^l(\mathbf{x}),
  \label{equ:basis-functions-random-operator}
\end{equation}
where $c_i^l(\boldsymbol{\omega})$ are to be determined with $i = 1, \cdots, N_H$ and $l = 0, \cdots, m_i$. The eigenfunction is further approximated by
\begin{equation}
  \psi^{\epsilon}(\mathbf{x}, \boldsymbol{\omega}) \approx \sum_{i=1}^{N_H}u_i\sum_{l = 0}^{m_i} c_i^l(\boldsymbol{\omega}) \zeta_i^l(\mathbf{x}),
\end{equation}
in which $u_i$ and $c_i^l$ are unknown. Next, we determine the unknowns $c_i^l$, leaving the discrete EVP with dofs $N_H$ to be solved.

Notice that the POD basis can be expressed into
\begin{equation}
  \zeta_i^l(\mathbf{x}) = \sum_{j=1}^{N_h} c_{i,j}^l \phi_j^h,
\end{equation}
and we can easily get $a(\zeta_i^l, \zeta_j^l) = \frac{\epsilon^2}{2}(\nabla\zeta_i^l, \nabla\zeta_j^l) + (v_0\zeta_i^l, \zeta_j^l)$. Meanwhile, owing to $\zeta_i^0 = \frac 1{Q}\sum_{j=1}^Q \phi_i(\mathbf{x}, \omega^j)$, it holds $(\zeta_i^0, \phi_j^H) = \alpha\delta_{i,j}$. And for $k \neq 0$, since
\begin{equation*}
  \zeta_i^k = \frac{1}{\sqrt{\sigma_k}}\sum_{j=1}^{Q} (v_k)_j \tilde{\phi}_i(\mathbf{x}, \omega^j) = \frac{1}{\sqrt{\sigma_k}}\sum_{j=1}^{Q} (v_k)_j \left( {\phi}_i(\mathbf{x}, \omega^j) - \frac 1{Q}\sum_{l=1}^Q \phi_i(\mathbf{x}, \omega^l) \right),
\end{equation*}
there holds $(\zeta_i^k, \phi_{j}^H) = 0$ for all $i, j = 1, \cdots, N_H$.
We therefore get the reduced optimal problem that the dofs depend on the dimension of the POD basis:
\begin{equation}
  \begin{aligned}
    &\min a\left( \sum_{l = 0}^{m_i} c_i^l \zeta_i^l(\mathbf{x}), \sum_{l = 0}^{m_i} c_i^l \zeta_i^l(\mathbf{x}) \right),\\
    &\text{s.t.}\qquad\int_{D} \sum_{l = 0}^{m_i} c_i^l \zeta_i^l(\mathbf{x}) \phi_i^H\mathrm{d}\mathbf{x} = \alpha.
  \end{aligned}
  \label{equ:optimal-problem2}
\end{equation}
We emphasized that here only one constraint is effective. The formal algorithm is then outlined as follows.
\begin{algorithm}[ht]
  \caption{The qMC with MsFEM-POD method for the EVP of the random Schr\"odinger operator.}
  \label{alg:MsFEM-POD-random-EVP}
  \begin{algorithmic}[1]
    \REQUIRE Random samples $\{\omega^j\}_{j=1}^N$, $Q$, coarse mesh $\mathcal{T}_H$, fine mesh $\mathcal{T}_h$, $i = 1, \cdots, N_H$
    \ENSURE Expectation of eigenpairs $(\mathds{E}(\lambda_{ms}), \mathds{E}(\psi_{ms}))$
    \FOR{each $j \in [1, Q]$}
    \STATE Solve optimal problems \eqref{equ:optimal-problem-objective}-\eqref{equ:optimal-problem} and generate basis sets $\{\phi_i^j\}_{j=1}^Q$ for all $i$;
    \ENDFOR
    \STATE Employ POD method to construct the order-reduced set $\Xi_i = \{\zeta_i^0(\mathbf{x}), \zeta_i^1(\mathbf{x}), \cdots, \zeta_i^{m_i}(\mathbf{x})\}$;
    \STATE Construct the new optimal problems \eqref{equ:optimal-problem2} by $\Xi_i$;
    \FOR{each $j \in [1, N]$}
    \STATE For the potential parameterized by $\omega^j$, solve the optimal problem \eqref{equ:optimal-problem2} and generate the multiscale basis $\{\hat{\phi}_i(\omega^j)\}_{i=1}^{N_H}$;
    \STATE Find $\lambda_{ms}({\omega}^j) \in \mathds{R}^{+}$ and $\psi_{ms}({\omega}^j) \in V_{ms}^{pod} := span\{\hat{\phi}_i({\omega}^j)\}_{i=1}^{N_H}$ such that
    \begin{equation}
      \mathcal{A}({\omega}^j; \psi_{ms}({\omega}^j), \phi_{ms}) = \lambda_{ms}({\omega}^j) (\psi_{ms}({\omega}^j), \phi_{ms}), \quad\forall \phi_{ms}\in V_{ms}^{pod}.
    \end{equation}
    \ENDFOR
    \STATE Compute the expectation $(\mathds{E}(\lambda_{ms}), \mathds{E}(\psi_{ms}))$.
  \end{algorithmic}
\end{algorithm}

\subsection{The qMC method}
The qMC method is a popular approach for approximating high-dimensional integrals, primarily due to its better convergence rate than the conventional MC method.
We consider the $s$-dimensional integral ($s$ usually very large)
\begin{equation}
  I_s(f) = \int_{[-\frac 12, \frac 12]^s} f(\boldsymbol{\omega}) \mathrm{d}\boldsymbol{\omega}.
  \label{equ:s-dimensional-integral}
\end{equation}
This integral cannot be analytically calculated and we use a class of qMC rules called randomly shifted rank-1 lattice rules to calculate it numerically. The integral points are constructed using a generating vector $z \in \mathds{N}^s$ and a uniformly distributed random shift $\boldsymbol{\Delta} \in [0, 1]^s$. We therefore obtain the approximation of \eqref{equ:s-dimensional-integral}
\begin{equation*}
  Q_{N, s}(f) = \frac 1N\sum_{j=1}^{N-1}f\left(\Big\{ \frac{jz}{N} + \boldsymbol{\Delta} \Big\} - \frac 12\right),
\end{equation*}
in which the braces indicate that the fractional part of each component is taken.

The error estimate of randomly shifted lattice rules requires the integrand to belong to a weighted Sobolev space. Denote $\mathcal{W}_{s, \boldsymbol{\gamma}}$ the $s$-dimensional weighted Sobolev space in which functions are square-integrable mixed first derivatives, and the concerned norm depends on a set of positive real weights. Let $\boldsymbol{\gamma} = \{\gamma_{\mathfrak{u}} > 0: \mathfrak{u} \subset\{1,2,\cdots, s\}\}$ be a collection of weights, and $W_{s, \boldsymbol{\gamma}}$ be the $s$-dimensional "unanchored" weighted Sobolev space equipped the norm
\begin{equation}
  \|f\|_{s, \boldsymbol{\gamma}} = \sum_{\mathfrak{u}\subset \{1:s\}} \frac 1{\gamma_{\mathfrak{u}}} \int_{[-\frac 12, \frac 12]^{|\mathfrak{u}|}} \left( \int_{[-\frac 12, \frac 12]^{s-|\mathfrak{u}|}} \frac{\partial^{|\mathfrak{u}|}}{\partial\omega_{\mathfrak{u}}} f(\omega) \mathrm{d}\omega_{-\mathfrak{u}} \right)^2 \mathrm{d}\omega_{\mathfrak{u}},
  \label{equ:qMC-general-estimate}
\end{equation}
where $\{ 1:s \} = \{1,2,\cdots, s\}$, $\omega_{\mathfrak{u}} = (\omega_j)_{j\in\mathfrak{u}}$ and $\omega_{-\mathfrak{u}} = (\omega_j)_{j\in\{1:s\}\backslash\mathfrak{u}}$. The root-mean-square error of such qMC approximation is
\begin{equation}
  \sqrt{ \mathds{E}_{\boldsymbol{\Delta}}\left( |I_s(f) - Q_{N,s}(f)| \right) } \leq \left( \frac 1{\varphi(N)} \sum_{\emptyset \neq \mathfrak{u}\subset \{1:s\}}\gamma_{\mathfrak{u}}^{\eta} \left( \frac{2\zeta(2\eta)}{(2\pi^2)^{\eta}} \right)^{|\mathfrak{u}|} \right)^{\frac 1{2\eta}} \|f\|_{s,\boldsymbol{\gamma}}
\end{equation}
for all $\eta \in (\frac 12, 1]$, where the expectation $\mathds{E}_{\boldsymbol{\Delta}}$ is taken with respect to the random shift $\boldsymbol{\Delta}$, $\varphi(N)$ is the Euler totient function with $\varphi(N) = |\{1 \leq \xi \leq N: \mathrm{gcd}(\xi, N) = 1\}|$, and $\zeta(x) = \sum_{k=1}^{\infty}k^{-x}$ for $x > 1$ is the Riemann zeta function. Note that $\varphi(N) = N - 1$ for prime $N$.

\section{Parametric regularity}
\label{sec:parameteric-regularity}
As indicated by \eqref{equ:qMC-general-estimate}, the norms $\|\lambda\|_{s, \boldsymbol{\gamma}}$ and $\|\psi\|_{s, \boldsymbol{\gamma}}$ are required for the error analysis of the qMC approximation. Let $\mathbf{m} = (m_j)_{j\in \mathds{N}}$, $\boldsymbol{\nu} = (\nu_j)_{j\in \mathds{N}}$. We define the multi-index notations: $\boldsymbol{\nu}! = \prod_{j\ge 1}\nu_j!$; $\boldsymbol{\nu} - \mathbf{m} = (\nu_j - m_j)_{j\in\mathds{N}}$; $\mathbf{m} \leq \boldsymbol{\nu}$ if $m_j \leq \nu_j$ for all $j \in \mathds{N}$. The following lemma gives the bound on the derivative of minimal eigenvalue $\lambda$ with respect to the stochastic variable $\boldsymbol{\omega}$, as well as the $L^2$-bound and $H^1$-bound on the derivative of the ground state.

\begin{lemma}
  Let $\boldsymbol{\nu}$ be a multi-index satisfying $|\boldsymbol{\nu}| \ge 0$. Then, for all $\boldsymbol{\omega} \in \Omega$, the derivative of the minimal eigenvalue with respect to $\boldsymbol{\omega}$ is bounded by
  \begin{equation}
    |\partial_{\boldsymbol{\omega}}^{\boldsymbol{\nu}}\lambda| \leq \frac{C_1(|\boldsymbol{\nu}|!)^{1+\epsilon}}{C_{gap}^{|\boldsymbol{\nu}|-1}} \prod_{j}(\|v_j\|_{\infty})^{\nu_j},
    \label{equ:bound-eigenvalue}
  \end{equation}
  and the derivative of the ground state satisfies
  \begin{align}
    \|\partial_{\boldsymbol{\omega}}^{\boldsymbol{\nu}}\psi\| \leq \frac{C_2 (|\boldsymbol{\nu}|!)^{1+\epsilon}}{C_{gap}^{|\boldsymbol{\nu}|}} \prod_{j}(\|v_j\|_{\infty})^{\nu_j}, \quad \|\partial_{\boldsymbol{\omega}}^{\boldsymbol{\nu}}\psi\|_1 \leq \frac{C_3\overline{\psi}(|\boldsymbol{\nu}|!)^{1+\epsilon}}{(C_{gap}\chi_1)^{|\boldsymbol{\nu}|}} \prod_{j}(\|v_j\|_{\infty})^{\nu_j},
    \label{equ:bound-eigenfunction}
  \end{align}
  where $\epsilon \in (0, 1)$, $C_1$, $C_2$ and $C_3$ are finite constants.
  \label{lem:parametric-regularity}
\end{lemma}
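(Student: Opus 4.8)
The plan is to establish the bounds by induction on the order $|\boldsymbol{\nu}|$, differentiating the eigenvalue equation \eqref{equ:weak-form-random-EVP} with respect to $\boldsymbol{\omega}$ and exploiting the uniform coercivity of the shifted bilinear form from \Cref{lem:coercive-A-lambda}. The starting point is the identity $\lambda = \mathcal{A}(\boldsymbol{\omega}; \psi, \psi)$ together with the linear dependence of $\mathcal{A}$ on $\boldsymbol{\omega}$: since $V(\mathbf{x}, \boldsymbol{\omega}) = v_0 + \sum_j \omega_j v_j$, the bilinear form satisfies $\partial_{\omega_j}\mathcal{A}(\boldsymbol{\omega}; u, w) = (v_j u, w)$, and all higher mixed derivatives in distinct directions vanish. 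This is the structural fact that makes the Leibniz expansions below terminate cleanly.

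\textbf{Step 1 (eigenvalue derivatives).} I would apply $\partial_{\boldsymbol{\omega}}^{\boldsymbol{\nu}}$ to the scalar identity $\lambda(\boldsymbol{\omega}) = \mathcal{A}(\boldsymbol{\omega}; \psi, \psi)$ using the Leibniz rule. Because $\mathcal{A}$ is affine in $\boldsymbol{\omega}$, only terms with at most one derivative landing on the bilinear form survive, producing a sum over splittings $\boldsymbol{\nu} = \mathbf{m} + (\boldsymbol{\nu}-\mathbf{m})$ of quantities like $(v_j\,\partial_{\boldsymbol{\omega}}^{\mathbf{m}}\psi,\, \partial_{\boldsymbol{\omega}}^{\boldsymbol{\nu}-\mathbf{m}}\psi)$ and $\lambda$-weighted inner products of lower-order $\psi$-derivatives. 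Bounding each factor by $\|v_j\|_{\infty}$ and the inductive $L^2$-estimates of $\psi$ yields the claimed factor $\prod_j \|v_j\|_\infty^{\nu_j}$; the combinatorial count of the Leibniz multinomial coefficients is what generates the $(|\boldsymbol{\nu}|!)^{1+\epsilon}$ growth and the power $C_{gap}^{-(|\boldsymbol{\nu}|-1)}$.

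\textbf{Step 2 (eigenfunction derivatives).} Differentiating the weak form \eqref{equ:weak-form-random-EVP} gives, after rearranging, a variational problem of the form $\mathcal{A}_{\lambda_1}(\boldsymbol{\omega};\, \partial_{\boldsymbol{\omega}}^{\boldsymbol{\nu}}\psi,\, \phi) = (\text{lower-order terms})$, where the right-hand side collects the derivatives that have already fallen on $V$, on $\lambda$, and on lower-order $\psi$-derivatives. Testing against $\phi = \partial_{\boldsymbol{\omega}}^{\boldsymbol{\nu}}\psi$ restricted to $E(\boldsymbol{\omega},\lambda_1(\boldsymbol{\omega}))^{\bot}$ (the normalization $\|\psi\|=1$ forces the derivatives into this orthogonal complement, since $(\partial_{\boldsymbol{\omega}}^{\boldsymbol{\nu}}\psi, \psi)$ is controlled by lower-order data) and invoking \eqref{equ:coercive-A-lambda} converts coercivity into the $H^1$-bound, while the Poincaré inequality \eqref{equ:Poincare-inequality} and the factor $\chi_1^{-1}$ relate the $H^1$- and $L^2$-estimates, explaining the $(C_{gap}\chi_1)^{-|\boldsymbol{\nu}|}$ denominator and the $\overline{\psi}$ prefactor. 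The $L^2$-bound follows similarly, losing one power of $C_{gap}$ relative to the eigenvalue.

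\textbf{The hard part} will be the bookkeeping in the inductive step: one must verify that the sum of products of lower-order bounds, each carrying its own factorial and $C_{gap}$-power, reassembles into exactly $(|\boldsymbol{\nu}|!)^{1+\epsilon}$ with the stated power of $C_{gap}$. This hinges on a multinomial inequality of the type $\sum_{\mathbf{m}\le\boldsymbol{\nu}} \binom{\boldsymbol{\nu}}{\mathbf{m}} (|\mathbf{m}|!)^{1+\epsilon}(|\boldsymbol{\nu}-\mathbf{m}|!)^{1+\epsilon} \lesssim (|\boldsymbol{\nu}|!)^{1+\epsilon}$, which is where the $\epsilon>0$ slack is genuinely needed to absorb the number of terms in the splitting. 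The separation of the normalization constraint into its orthogonal component—ensuring the test function legitimately lies in $E(\boldsymbol{\omega},\lambda_1)^{\bot}$ so \Cref{lem:coercive-A-lambda} applies—is the other delicate point I would treat carefully rather than gloss over.
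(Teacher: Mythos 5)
Your proposal is correct and follows essentially the same route as the paper: Leibniz differentiation of the eigenvalue equation exploiting the affine dependence of $V$ on $\boldsymbol{\omega}$, control of the component of $\partial_{\boldsymbol{\omega}}^{\boldsymbol{\nu}}\psi$ along $\psi$ by differentiating the normalization $\|\psi\|^2=1$, control of the orthogonal component via the spectral-gap coercivity of \Cref{lem:coercive-A-lambda}, and an induction closing the factorial and $C_{gap}$-power bookkeeping (which the paper likewise delegates to the argument of Lemma 3.4 in \cite{gilbert2019analysis}). The only cosmetic difference is that you differentiate the Rayleigh-quotient identity $\lambda=\mathcal{A}(\boldsymbol{\omega};\psi,\psi)$ directly, whereas the paper differentiates the PDE and pairs with $\psi$ so that the leading term cancels by self-adjointness; the resulting recursions are the same.
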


\begin{proof}
  According to the definitions of $\overline{\lambda}$ and $\overline{\psi}$, the bounds \eqref{equ:bound-eigenvalue} and \eqref{equ:bound-eigenfunction} clearly hold for $\boldsymbol{\nu} = \mathbf{0}$.
  For $\boldsymbol{\nu} \ne \mathbf{0}$, taking the derivatives for the \eqref{equ:eigenvalue-problem} with respect to $\boldsymbol{\omega}$, and employing the Leibniz general product rule yields
  \begin{equation}
    -\frac{\epsilon^2}{2} \Delta \partial_{\boldsymbol{\omega}}^{\boldsymbol{\nu}}\psi + V(\mathbf{x}, \boldsymbol{\omega})\partial_{\boldsymbol{\omega}}^{\boldsymbol{\nu}}\psi + \sum_{j=1}^{\infty}v_j\partial_{\boldsymbol{\omega}}^{\boldsymbol{\nu} - \mathbf{e}_j}\psi = \sum_{\mathbf{m}\leq \boldsymbol{\nu}} \binom{\boldsymbol{\nu}}{\mathbf{m}}\partial_{\boldsymbol{\omega}}^{\mathbf{m}}\lambda \partial_{\boldsymbol{\omega}}^{\boldsymbol{\nu} - \mathbf{m}}\psi.
    \label{equ:derivative-EVP-wrt-omega}
  \end{equation}
  Separating out the $\partial_{\boldsymbol{\omega}}^{\boldsymbol{\nu}}\lambda$ and using $\|\psi\| = 1$ yields
  \begin{align*}
    \partial_{\boldsymbol{\omega}}^{\boldsymbol{\nu}}\lambda = &\frac{\epsilon^2}{2} (\nabla \partial_{\boldsymbol{\omega}}^{\boldsymbol{\nu}}\psi, \nabla\psi) + (V(\mathbf{x}, \boldsymbol{\omega})\partial_{\boldsymbol{\omega}}^{\boldsymbol{\nu}}\psi, \psi)
    \\
    &+ \sum_{j=1}^{\infty} (v_j\partial_{\boldsymbol{\omega}}^{\boldsymbol{\nu} - \mathbf{e}_j}\psi, \psi) - \sum_{ \mathbf{m} < \boldsymbol{\nu}} \binom{\boldsymbol{\nu}}{\mathbf{m}} \partial_{\boldsymbol{\omega}}^{\mathbf{m}}\lambda (\partial_{\boldsymbol{\omega}}^{\boldsymbol{\nu} - \mathbf{m}}\psi, \psi).
  \end{align*}
  Due to $\frac{\epsilon^2}{2} (\nabla \partial_{\boldsymbol{\omega}}^{\boldsymbol{\nu}}\psi, \nabla\psi) + (V(\mathbf{x}, \boldsymbol{\omega})\partial_{\boldsymbol{\omega}}^{\boldsymbol{\nu}}\psi, \psi) - \lambda(\partial_{\boldsymbol{\omega}}^{\boldsymbol{\nu}}\psi, \psi) = 0$,
  we have
  \begin{align}
    |\partial_{\boldsymbol{\omega}}^{\boldsymbol{\nu}}\lambda| &\leq \sum_{j=1}^{\infty} \nu_j (v_j\partial_{\boldsymbol{\omega}}^{\boldsymbol{\nu} - \mathbf{e}_j}\psi, \psi) - \sum_{\mathbf{0} \neq \mathbf{m} < \boldsymbol{\nu}} {\boldsymbol{\nu} \choose \mathbf{m}} \partial_{\boldsymbol{\omega}}^{\mathbf{m}}\lambda (\partial_{\boldsymbol{\omega}}^{\boldsymbol{\nu} - \mathbf{m}}\psi, \psi) \nonumber\\
    &\leq \sum_{j=1}^{\infty} \nu_j \|v_j\|_{\infty} \|\partial_{\boldsymbol{\omega}}^{\boldsymbol{\nu} - \mathbf{e}_j}\psi\| + \sum_{\mathbf{0} \neq \mathbf{m} < \boldsymbol{\nu}} {\boldsymbol{\nu} \choose \mathbf{m}} |\partial_{\boldsymbol{\omega}}^{\mathbf{m}}\lambda| \| \partial_{\boldsymbol{\omega}}^{\boldsymbol{\nu} - \mathbf{m}}\psi\| \label{equ:L2-bound-partial-nu-lambda}\\
    &\leq \chi_1^{-1/2}\sum_{j=1}^{\infty} \nu_j \|v_j\|_{\infty} \|\partial_{\boldsymbol{\omega}}^{\boldsymbol{\nu} - \mathbf{e}_j}\psi\|_1 + \chi_1^{-1/2}\sum_{\mathbf{0} \neq \mathbf{m} < \boldsymbol{\nu}} {\boldsymbol{\nu} \choose \mathbf{m}} |\partial_{\boldsymbol{\omega}}^{\mathbf{m}}\lambda| \| \partial_{\boldsymbol{\omega}}^{\boldsymbol{\nu} - \mathbf{m}}\psi\|_1.
    \label{equ:H1-bound-partial-nu-lambda}
  \end{align}
  This indicates that the bound $|\partial_{\boldsymbol{\omega}}^{\boldsymbol{\nu}}\lambda|$ depends on the lower order derivatives of both the minimal eigenvalue $\lambda$ and the ground state $\psi$.

  Next, we compute the bound of $\|\partial_{\boldsymbol{\omega}}^{\boldsymbol{\nu}}\psi\|_1$. Since $\|\psi\| = 1$, we have
  \begin{equation*}
    0 = \partial_{\boldsymbol{\omega}}^{\boldsymbol{\nu}} (\psi, \psi) = \sum_{\mathbf{m}\leq \boldsymbol{\nu}} {\boldsymbol{\nu} \choose \mathbf{m}} (\partial_{\boldsymbol{\omega}}^{\mathbf{m}}\psi, \partial_{\boldsymbol{\omega}}^{\boldsymbol{\nu} - \mathbf{m}}\psi),
  \end{equation*}
  which infers that
  \begin{align}
    |(\partial_{\boldsymbol{\omega}}^{\boldsymbol{\nu}}\psi, \psi)| &= \Big| -\frac 12 \sum_{\mathbf{0} \ne \mathbf{m} < \boldsymbol{\nu}} {\boldsymbol{\nu} \choose \mathbf{m}} (\partial_{\boldsymbol{\omega}}^{\mathbf{m}}\psi, \partial_{\boldsymbol{\omega}}^{\boldsymbol{\nu} - \mathbf{m}}\psi) \Big| \nonumber \\
    &\leq \frac{1}{2}\sum_{\mathbf{0} \ne \mathbf{m} < \boldsymbol{\nu}} {\boldsymbol{\nu} \choose \mathbf{m}} \|\partial_{\boldsymbol{\omega}}^{\mathbf{m}}\psi\| \|\partial_{\boldsymbol{\omega}}^{\boldsymbol{\nu} - \mathbf{m}}\psi\|
    \leq \frac{1}{2\chi_1}\sum_{\mathbf{0} \ne \mathbf{m} < \boldsymbol{\nu}} {\boldsymbol{\nu} \choose \mathbf{m}} \|\partial_{\boldsymbol{\omega}}^{\mathbf{m}}\psi\|_1 \|\partial_{\boldsymbol{\omega}}^{\boldsymbol{\nu} - \mathbf{m}}\psi\|_1.
    \label{equ:bound-inner-product-nu-derivative}
  \end{align}
  In the eigenspace, we have the decomposition
  \begin{equation}
    \partial_{\boldsymbol{\omega}}^{\boldsymbol{\nu}}\psi = \sum_{k\in\mathds{N}}(\partial_{\boldsymbol{\omega}}^{\boldsymbol{\nu}}\psi_k, \psi_k)\psi_k = (\partial_{\boldsymbol{\omega}}^{\boldsymbol{\nu}}\psi, \psi)\psi + \tilde{\psi},
    \label{equ:eigenstate-decomposition}
  \end{equation}
  so that $\tilde{\psi} \in E(\boldsymbol{\omega}, \lambda(\boldsymbol{\omega}))^{\bot}$, which infers that
  \begin{align}
    \| \partial_{\boldsymbol{\omega}}^{\boldsymbol{\nu}}\psi \| \leq |(\partial_{\boldsymbol{\omega}}^{\boldsymbol{\nu}}\psi, \psi)| + \|\tilde{\psi}\|,
    \label{equ:bound-psi-L2}
  \end{align}
  as well as
  \begin{align}
    \| \partial_{\boldsymbol{\omega}}^{\boldsymbol{\nu}}\psi \|_1 \leq |(\partial_{\boldsymbol{\omega}}^{\boldsymbol{\nu}}\psi, \psi)| \overline{\psi} + \|\tilde{\psi}\|_1.
    \label{equ:bound-psi-H1}
  \end{align}
  We first compute the $L^2$-bound. Owing to
  \begin{align*}
    \mathcal{A}(\partial_{\boldsymbol{\omega}}^{\boldsymbol{\nu}}\psi, \tilde{\psi}) - \lambda(\partial_{\boldsymbol{\omega}}^{\boldsymbol{\nu}}\psi, \tilde{\psi}) &= (\partial_{\boldsymbol{\omega}}^{\boldsymbol{\nu}}\psi, \tilde{\psi})(\mathcal{A}(\psi, \tilde{\psi}) - \lambda(\psi, \tilde{\psi})) + \mathcal{A}(\tilde{\psi}, \tilde{\psi}) - \lambda(\tilde{\psi}, \tilde{\psi}) \\
    &= \mathcal{A}(\tilde{\psi}, \tilde{\psi}) - \lambda(\tilde{\psi}, \tilde{\psi}) \ge C_{gap}\|\tilde{\psi}\|_1^2 \ge C_{gap}\|\tilde{\psi}\|^2.
  \end{align*}
  Taking inner product of \eqref{equ:derivative-EVP-wrt-omega} with $\tilde{\psi}$ yields
  \begin{equation*}
    \mathcal{A}(\partial_{\boldsymbol{\omega}}^{\boldsymbol{\nu}}\psi, \tilde{\psi}) - \lambda(\partial_{\boldsymbol{\omega}}^{\boldsymbol{\nu}}\psi, \tilde{\psi}) = \sum_{\mathbf{0}\ne \mathbf{m} < \boldsymbol{\nu}}{\boldsymbol{\nu} \choose \mathbf{m}}\partial_{\boldsymbol{\omega}}^{\mathbf{m}}\lambda (\partial_{\boldsymbol{\omega}}^{\boldsymbol{\nu}-\mathbf{m}}\psi, \tilde{\psi}) - \sum_{j=1}^{\infty} \nu_j(v_j\partial_{\boldsymbol{\omega}}^{\boldsymbol{\nu} - \mathbf{e}_j}\psi, \tilde{\psi}),
  \end{equation*}
  in which we have used the fact that $(\psi, \tilde{\psi}) = 0$.
  We then arrive at
  \begin{equation}
    \|\tilde{\psi}\| \leq \frac{1}{C_{gap}} \left( \sum_{\mathbf{0}\ne \mathbf{m} < \boldsymbol{\nu}}{\boldsymbol{\nu} \choose \mathbf{m}} |\partial_{\boldsymbol{\omega}}^{\mathbf{m}}\lambda| \|\partial_{\boldsymbol{\omega}}^{\boldsymbol{\nu}-\mathbf{m}}\psi\| + \sum_{j=1}^{\infty} \nu_j \|v_j\|_{\infty} \|\partial_{\boldsymbol{\omega}}^{\boldsymbol{\nu} - \mathbf{e}_j}\psi\| \right).
    \label{equ:bound-tilde-psi}
  \end{equation}

  Substituting the two bounds \eqref{equ:bound-inner-product-nu-derivative} and \eqref{equ:bound-tilde-psi} into \eqref{equ:bound-psi-L2}, we derive the bound on the derivative of the ground state
  \begin{equation}
    \|\partial_{\boldsymbol{\omega}}^{\boldsymbol{\nu}}\psi\| \leq \sum_{\mathbf{0} \ne \mathbf{m} < \boldsymbol{\nu}} {\boldsymbol{\nu}
    \choose \mathbf{m}} \left( \frac{1}{2} \|\partial_{\boldsymbol{\omega}}^{\mathbf{m}}\psi\| + \frac{|\partial_{\boldsymbol{\omega}}^{\mathbf{m}}\lambda|}{C_{gap}} \right) \|\partial_{\boldsymbol{\omega}}^{\boldsymbol{\nu} - \mathbf{m}}\psi\| +
    \frac{1}{C_{gap}} \sum_{j=1}^{\infty} \nu_j \|v_j\|_{\infty} \|\partial_{\boldsymbol{\omega}}^{\boldsymbol{\nu} - \mathbf{e}_j}\psi\|.
    \label{equ:bound-partial-nu-psi-L2}
  \end{equation}

  Furthermore, applying the Poincar\'e inequality and repeating the above procedures yields the $H^1$-bound
  \begin{align*}
    \|\partial_{\boldsymbol{\omega}}^{\boldsymbol{\nu}}\psi\|_1 \leq &\sum_{\mathbf{0} \ne \mathbf{m} < \boldsymbol{\nu}} {\boldsymbol{\nu}
    \choose \mathbf{m}} \left( \frac{\overline{\psi}
    }{2\chi_1} \|\partial_{\boldsymbol{\omega}}^{\mathbf{m}}\psi\|_1 + \frac{|\partial_{\boldsymbol{\omega}}^{\mathbf{m}}\lambda|}{C_{gap}\chi_1} \right) \|\partial_{\boldsymbol{\omega}}^{\boldsymbol{\nu} - \mathbf{m}}\psi\|_1 \\
    &+
    \frac{1}{C_{gap}\chi_1} \sum_{j=1}^{\infty} \nu_j \|v_j\|_{\infty} \|\partial_{\boldsymbol{\omega}}^{\boldsymbol{\nu} - \mathbf{e}_j}\psi\|_1.
    \label{equ:bound-partial-nu-psi-H1}
  \end{align*}
  Next, with similar induction steps as Lemma 3.4 in \cite{gilbert2019analysis}, an application of the induction argument yields \eqref{equ:bound-eigenvalue} and \eqref{equ:bound-eigenfunction}.
\end{proof}

\section{Convergence analysis}
\label{sec:error-analysis}
The error analysis of this part mainly concerns the approximation error of \Cref{alg:MsFEM-POD-random-EVP}. Remove the POD error from the results and produce the error estimate for \Cref{alg:MsFEM-deterministic-EVP}.
To begin with, we derive a priory estimate for the variational approximation of the deterministic Schr\"odinger operator.

\subsection{Convergence analysis of the MsFEM for the EVP}
\label{subsec:MsFEM-deternimistic-operator}

Denote $V_H$ a family of finite-dimensional subspace of $H_P^1(D)$ such that
\begin{equation}
  \min\{ \|\psi - \psi_H\|_1; \psi_H \in V_H \} \xrightarrow[H\rightarrow 0^+]{} 0
\end{equation}
and define the variational approximation of the deterministic Schr\"odinger operator
\begin{equation}
  \inf \{ E(\psi_H); \psi_H \in V_H, \|\psi_H\| = 1 \}.
\end{equation}
This problem has at least one minimizer $\psi_H$ such that for some $\lambda \in \mathds{R}$
\begin{equation}
  \langle \hat{H}\psi_H - \lambda\psi_H, v \rangle = 0, \quad \forall v \in V_H,
\end{equation}
where $\hat{H} = \frac{\epsilon^2}{2}\Delta + v_0(\mathbf{x})$.
Assume $v_0(\mathbf{x}) \in L^{\infty}(D)$, for all $v \in H_P^1(D)$ it holds
\begin{equation*}
  \langle (\hat{H} - \lambda)v, v \rangle \leq \frac{\epsilon^2}{2}\|\nabla v\|^2 + \|v_0\|_{L^{\infty}}\|v\|^2.
\end{equation*}
Meanwhile, let $\lambda$ be the minimal eigenvalue of $\hat{H}$. We take the decomposition for $v = (v, \psi)\psi + \tilde{\psi}$,
which implies that $\langle (\hat{H}-\lambda)v, v \rangle = (\tilde{\psi}, \tilde{\psi}) \ge 0$.
Hence, there exists a nonnegative constant $M$ such that for all $v\in H_P^1(D)$
\begin{equation}
  0 \leq \langle (\hat{H} - \lambda)v, v \rangle \leq M\|v\|_1^2.
    \label{equ:upper-lower-bound}
\end{equation}

Before the formal convergence estimate for the EVP is given, we consider the elliptic problem
\begin{equation}
    a(u, v) = f(v),
    \label{equ:elliptic-problem}
\end{equation}
where $a(u, v) = \frac{\epsilon^2}{2}(\nabla u, \nabla v) + (v_0u, v)$, which has been defined in \eqref{equ:weak-from}.
\begin{lemma}{\cite{sun2016finite}}
  \label{lem:L2-leq-H1-connection}
  Given $f \in L^2(D)$, let $u_H$ be the solution of
  \begin{equation*}
    a(u_H, v_H) = f(v_H),\quad \forall v_H \in V_H.
  \end{equation*}
  The numerical solution $u_H \in V_H$ satisfies
  \begin{equation}
    \|u - u_H\| \leq C\|u - u_H\|_1 \sup_{g\in L^2(D), \|g\| \neq 0} \left\{ \frac 1{\|g\|}\inf_{v\in V_H} \|\phi_g - v\| \right\},
  \end{equation}
  where, for every $g \in L^2(D)$, $\phi_g \in H_P^1(D)$ denotes the corresponding unique solution of the equation
  \begin{equation}
    \langle \hat{H}w, \phi_g \rangle := a(w, \phi_g) = (g, w), \text{ for all } w \in H_P^1(D).
    \label{equ:dual-problem-source-problem}
  \end{equation}
\end{lemma}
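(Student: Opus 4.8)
The plan is to carry out the classical Aubin--Nitsche duality argument, using the coercivity \eqref{equ:coercive-A} and boundedness \eqref{equ:upper-bounded-A} of the bilinear form $a(\cdot,\cdot)$ (inherited from $\mathcal{A}$ upon fixing the deterministic potential $v_0$). First I would represent the $L^2$-error by duality, writing
\begin{equation*}
  \|u - u_H\| = \sup_{g \in L^2(D),\, \|g\| \neq 0} \frac{(g,\, u - u_H)}{\|g\|},
\end{equation*}
so that it is enough to estimate $(g, u - u_H)$ for an arbitrary fixed $g$. For each such $g$, the associated dual solution $\phi_g$ of \eqref{equ:dual-problem-source-problem} exists and is unique by the Lax--Milgram theorem, again thanks to \eqref{equ:coercive-A}--\eqref{equ:upper-bounded-A}. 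Testing \eqref{equ:dual-problem-source-problem} with the admissible choice $w = u - u_H \in H_P^1(D)$ then gives the key representation $(g, u - u_H) = a(u - u_H, \phi_g)$.

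The second step is Galerkin orthogonality. Since $u$ solves \eqref{equ:elliptic-problem} and $u_H$ is its Galerkin approximation in $V_H$, subtracting $a(u, v_H) = f(v_H) = a(u_H, v_H)$ shows that $a(u - u_H, v_H) = 0$ for every $v_H \in V_H$. Hence the dual solution $\phi_g$ in the representation above may be replaced by $\phi_g - v$ for any $v \in V_H$ without changing the value, i.e.
\begin{equation*}
  (g, u - u_H) = a(u - u_H, \phi_g - v), \qquad \forall\, v \in V_H.
\end{equation*}
Applying the continuity \eqref{equ:upper-bounded-A} to the right-hand side gives $(g, u - u_H) \le c_2 \|u - u_H\|_1 \|\phi_g - v\|_1$; taking the infimum over $v \in V_H$, dividing by $\|g\|$, and taking the supremum over $g$ then yields the asserted bound with $C = c_2$.

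The argument is short, so the only delicate points are organizational. One must verify that the dual problem \eqref{equ:dual-problem-source-problem} is well posed and that substituting $w = u - u_H$ into it reproduces precisely the error functional $(g, u - u_H)$, which then pairs cleanly with Galerkin orthogonality in the second argument of $a$. The more substantive observation is that the continuity bound \eqref{equ:upper-bounded-A} controls the residual $\phi_g - v$ in the $H^1$-norm rather than the $L^2$-norm, because $a$ carries the gradient term $\tfrac{\epsilon^2}{2}(\nabla\,\cdot, \nabla\,\cdot)$; I would therefore read the right-hand factor in the statement as the best-approximation error of the dual solutions measured in $\|\cdot\|_1$. This factor is exactly the mechanism that later converts the $H^1$ convergence rate of $u - u_H$ into the sharper $L^2$ rate: together with the elliptic regularity $\|\phi_g\|_2 \lesssim \|g\|$ on the convex domain $D$ and the approximation properties of $V_H$, it bounds $\sup_g \|g\|^{-1}\inf_{v\in V_H}\|\phi_g - v\|_1$ by a power of $H$.
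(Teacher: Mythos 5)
Your proposal is correct and follows essentially the same route as the paper's own proof: the Aubin--Nitsche duality representation of $\|u-u_H\|$, the substitution $w = u-u_H$ into the dual problem, Galerkin orthogonality to replace $\phi_g$ by $\phi_g - v$, and continuity of $a(\cdot,\cdot)$. Your reading of the right-hand factor as the $H^1$ best-approximation error $\inf_{v\in V_H}\|\phi_g - v\|_1$ is also what the paper's proof actually derives (the $L^2$ norm in the displayed statement appears to be a typo).
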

\begin{proof}
  By the Riesz Representation Theorem, we can define
  \begin{equation}
    \|w\| = \sup_{g\in L^2(D), g\neq 0} \frac{(g, w)}{\|g\|}.
    \label{equ:RRT-result}
  \end{equation}
  Letting $w = u - u_H$ in \eqref{equ:dual-problem-source-problem}, since $a(u-u_H, v_H) = 0$, we get
  \begin{equation*}
    (g, u-u_H) = a(u-u_H, \phi_g) = a(u-u_H, \phi_g-v_H) \leq C\|u-u_H\|_1 \|\phi_g-v_H\|_1.
  \end{equation*}
  It follows that $(g, u-u_H) \leq C\|u-u_H\|_1 \inf_{v_H \in V_H}\|\phi_g-v_H\|_1$.
  Then the duality argument \eqref{equ:RRT-result} implies
  \begin{equation*}
    \|u - u_H\| \leq C \|u - u_H\|_1 \sup_{g\in L^2(D), g \neq 0} \left\{ \inf_{v_H \in V_H}\frac{\|\phi_g-v_H\|_1}{\|g\|} \right\}.
  \end{equation*}
  Furthermore, since $\phi_g$ solves \eqref{equ:dual-problem-source-problem}, if $u \in H^r(D) \cap H_P^1(D)$ with $1 \leq r \leq 2$, we have
  \begin{equation*}
    \sup_{g\in L^2(D), g \neq 0} \left\{ \inf_{v_H \in V_H}\frac{\|\phi_g-v_H\|_1}{\|g\|} \right\} \leq CH^{s-1}.
  \end{equation*}
\end{proof}

Hence, for $w \in H^{-1}(D)$, denote $\Psi_w$ the unique solution of the adjoint problem
\begin{equation}
  \langle (\hat{H} - \lambda)\Psi_w, v\rangle = ( w, v ) \text{ for all } v \in \psi^{\bot},
  \label{equ:adjoint-problem-EVP}
\end{equation}
where $\Psi_w \in \psi^{\bot} := \{ v \in H_P^1(D) | (\psi, v) = 0\}$.
Since $\lambda$ is the minimal eigenvalue, there exists a non-negative constant $\beta$ such that
\begin{equation*}
  \beta \| v\|_1^2 \leq \left( (\hat{H} - \lambda)v, v \right).
\end{equation*}
We then get the existence and uniqueness of the solution to \eqref{equ:adjoint-problem-EVP} and the bound
\begin{equation}
  \|\Psi_w\|_1 \leq \beta^{-1}\| w \|.
\end{equation}

\begin{lemma}
  Assume that there exist a family $(V_H)_{H>0}$ of finite dimensional subspace of $H_P^1(D)$ such that
  \begin{equation}
    \min \{\|\psi - \psi_H\|_1, \psi_H\in V_H\} \xrightarrow[H\rightarrow 0^+]{} 0,
    \label{equ:subspace-convergence}
  \end{equation}
  and then it holds $\|\psi - \psi_H\|_1 \xrightarrow[H\rightarrow 0^+]{} 0$.
  The FEM approximation for the EVP satisfies
  \begin{equation}
    E(\psi_H) - E(\psi) \leq C\|\psi_H - \psi\|_1^2,
  \end{equation}
  and
  \begin{equation}
    |\lambda_H - \lambda| \leq C\|\psi_H - \psi\|_1^2,
    \label{equ:eigenvalue-bounded-by-H1}
  \end{equation}
  where $C$ is a constant $C$ and $H > 0$.
  Besides, there exists $H_0 > 0$ and $C > 0$ such that for all $0 < H < H_0$,
  \begin{equation}
    \|\psi_H - \psi\| \leq CH^{r-1}\|\psi_H - \psi\|_1.
  \end{equation}
\end{lemma}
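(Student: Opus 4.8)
The plan is to dispatch the two energy/eigenvalue estimates together through one algebraic identity, and to handle the $L^2$ estimate by an Aubin--Nitsche duality argument tailored to the eigenvalue problem. Write $e := \psi_H - \psi$. The normalizations $\|\psi\| = \|\psi_H\| = 1$ give, upon expanding $1 = \|\psi+e\|^2$, the relation $(\psi, e) = -\tfrac12\|e\|^2$, so the $\psi$-component of $e$ is quadratically small. Since $E(\phi) = \tfrac12 a(\phi,\phi)$ and both functions are normalized, $E(\psi)=\tfrac12\lambda$ and $E(\psi_H)=\tfrac12\lambda_H$; hence the first two claims are the same estimate up to a factor $\tfrac12$, and it suffices to control $\lambda_H-\lambda$.

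For that, I would expand
\begin{equation*}
  \lambda_H - \lambda = a(\psi_H,\psi_H) - a(\psi,\psi) = 2a(\psi,e) + a(e,e),
\end{equation*}
invoke the eigen-relation $a(\psi,e)=\lambda(\psi,e)$ together with $(\psi,e)=-\tfrac12\|e\|^2$, and collect terms to obtain the clean identity $\lambda_H-\lambda = a(e,e)-\lambda(e,e) = \langle(\hat H-\lambda)e,e\rangle$. The two-sided bound \eqref{equ:upper-lower-bound} immediately yields $0\le \lambda_H-\lambda\le M\|e\|_1^2$, which is \eqref{equ:eigenvalue-bounded-by-H1}, and halving gives $E(\psi_H)-E(\psi)\le C\|e\|_1^2$. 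The convergence $\|\psi-\psi_H\|_1\to 0$ under \eqref{equ:subspace-convergence} I would take from the classical variational theory of spectral approximation (min--max principle plus density); combined with the coercivity $\beta\|v\|_1^2\le\langle(\hat H-\lambda)v,v\rangle$ on $\psi^{\bot}$ it upgrades, via a C\'ea-type quasi-optimality argument and interpolation of $\psi\in H^r$, to the rate $\|e\|_1\le CH^{r-1}$ needed below.

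For the $L^2$ estimate I would decompose $e=\alpha\psi+\eta$ with $\alpha=(\psi,e)=-\tfrac12\|e\|^2$ and $\eta\in\psi^{\bot}$, so that $\|e\|^2=\alpha^2+\|\eta\|^2$ and it suffices to bound $\|\eta\|$. Let $\Psi\in\psi^{\bot}$ solve the adjoint problem \eqref{equ:adjoint-problem-EVP} with datum $w=\eta$, so $\|\Psi\|_1\le\beta^{-1}\|\eta\|$ and, by elliptic regularity, $\|\Psi\|_r\le C\|\eta\|$. Testing with $v=\eta$ and using $(\hat H-\lambda)\psi=0$ gives
\begin{equation*}
  \|\eta\|^2 = \langle(\hat H-\lambda)\eta,\Psi\rangle = \langle(\hat H-\lambda)e,\Psi\rangle.
\end{equation*}
Subtracting the discrete and continuous weak forms furnishes the Galerkin identity $\langle(\hat H-\lambda)e,v_H\rangle=(\lambda_H-\lambda)(\psi_H,v_H)$ for all $v_H\in V_H$. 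Splitting $\Psi=(\Psi-v_H)+v_H$, I would bound the first piece by $C\|e\|_1\|\Psi-v_H\|_1$ and choose $v_H$ quasi-optimal so that $\inf_{v_H}\|\Psi-v_H\|_1\le CH^{r-1}\|\Psi\|_r\le CH^{r-1}\|\eta\|$, and bound the second piece by $|\lambda_H-\lambda|\,\|\psi_H\|\,\|v_H\|\le C\|e\|_1^2\|\eta\|$, which together give $\|\eta\|\le CH^{r-1}\|e\|_1 + C\|e\|_1^2$.

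Finally, absorbing the quadratic term with the rate $\|e\|_1\le CH^{r-1}$ (valid for $0<H<H_0$) leaves $\|\eta\|\le CH^{r-1}\|e\|_1$, and the elementary identity $\|e\|^2=\tfrac14\|e\|^4+\|\eta\|^2$ gives $\|e\|\le C\|\eta\|$ once $\|e\|<1$, completing $\|\psi_H-\psi\|\le CH^{r-1}\|\psi_H-\psi\|_1$. I expect the $L^2$ step to be the main obstacle: because this is an eigenvalue (not a linear source) problem, Galerkin orthogonality fails and leaves the residual $(\lambda_H-\lambda)(\psi_H,v_H)$, which must be shown to be genuinely higher order. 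This is exactly where the already-established estimate $|\lambda_H-\lambda|\le C\|e\|_1^2$ and the $H^1$ rate are indispensable to fold it into the leading $CH^{r-1}\|e\|_1$ term; the non-orthogonality of $e$ to $\psi$ is a minor, purely quadratic correction.
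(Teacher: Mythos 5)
Your proof is correct and its architecture coincides with the paper's: the same identity $\lambda_H-\lambda=\langle(\hat H-\lambda)(\psi_H-\psi),\psi_H-\psi\rangle$ (obtained from $(\psi,e)=-\tfrac12\|e\|^2$) together with \eqref{equ:upper-lower-bound} gives the first two bounds, and the $L^2$ estimate is the same Aubin--Nitsche argument via the adjoint problem \eqref{equ:adjoint-problem-EVP}; your decomposition $e=\alpha\psi+\eta$ is literally the paper's construction, since its $\psi_H^*-\psi$ equals your $\eta$. The one substantive divergence is how the eigenvalue residual $(\lambda_H-\lambda)(\psi_H,v_H)$ is tamed. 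You bound $(\psi_H,v_H)$ by $\|\psi_H\|\,\|v_H\|\le C\|\eta\|$, which leaves $\|\eta\|\le CH^{r-1}\|e\|_1+C\|e\|_1^2$ and forces you to import the a priori rate $\|e\|_1\le CH^{r-1}$ from Babu\v{s}ka--Osborn-type spectral approximation theory in order to fold the quadratic term into the leading one; that rate is not established in the lemma (which only asserts the relative bound) and requires $\psi\in H^r$ as an extra standing hypothesis. The paper instead restricts the discrete test function to $V_H\cap\psi^{\bot}$, so that $(\psi_H,\Psi_H)=(\psi_H-\psi,\Psi_H)$ and the residual becomes $(\lambda_H-\lambda)\,\|e\|\,\|\Psi_H\|\lesssim\|e\|_1^2\|e\|^2$; this extra factor of $\|e\|$ makes the term absorbable into the left-hand side $\|e\|^2$ using only the qualitative convergence $\|e\|_1\to 0$ guaranteed by \eqref{equ:subspace-convergence}, with no rate needed. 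Your route is legitimate but less self-contained; if you adopt the orthogonality restriction on $v_H$ (checking, as the paper implicitly must, that $\Psi_{\psi_H-\psi}\in\psi^{\bot}$ can still be approximated from $V_H\cap\psi^{\bot}$ at the rate of \Cref{lem:L2-leq-H1-connection}), the external input disappears.
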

\begin{proof}
  Let $P_H\psi \in V_H$ be such that
  \begin{equation*}
    \|\psi - P_H\psi\|_1 = \min\{ \|\psi - v_H\|_1, \forall v_H \in V_H \}.
  \end{equation*}
  From \eqref{equ:subspace-convergence}, we deduce that $(P_H\psi)_{H>0}$ converges to $\psi$ in $H_P^1(D)$ with $H\rightarrow 0$.

  Since $\lambda(\psi_H, \psi) = (\psi_H, \hat{H}\psi) = \lambda_H(\psi, \psi_H)$, we get
  \begin{gather*}
      \lambda_H - \lambda = \langle (\hat{H} - \lambda)(\psi_H - \psi), (\psi_H - \psi) \rangle, \\
      E(\psi_H) - E(\psi) = \frac 12 \langle \hat{H}\psi_H, \psi_H \rangle - \frac 12 \langle \hat{H}\psi, \psi \rangle
    = \frac 12\langle (\hat{H} - \lambda)(\psi_H - \psi), (\psi_H - \psi)\rangle.
  \end{gather*}
  According to \eqref{equ:upper-lower-bound}, we have
  \begin{equation*}
    E(\psi_H) - E(\psi) \leq \| \psi_H - \psi \|_1^2, \quad |\lambda_H - \lambda| \leq \| \psi_H - \psi \|_1^2.
  \end{equation*}
  Next, we estimate the error $\|\psi_H - \psi\|$. Let $\psi_H^*$ be the orthogonal projection of $\psi_H$ on the affine space $\{v \in L^2(D)| (\psi, v) = 1\}$. One has
  \begin{equation}
    \psi_H^* \in H_P^1(D), \psi_H^* - \psi \in \psi^{\bot}, \psi_H^* - \psi_H = \frac 12\|\psi_H - \psi\|^2\psi,
  \end{equation}
  from which we infer that
  \begin{align*}
    \|\psi_H - \psi\|^2 &= \int_D(\psi_H - \psi)(\psi_H^* - \psi) + \int_D(\psi_H - \psi)(\psi_H - \psi_H^*) \\
    &= \int_D(\psi_H - \psi)(\psi_H^* - \psi) + \frac 14\|\psi_H - \psi\|^4 \\
    &= \langle (\hat{H} - \lambda)\Psi_{\psi_H-\psi}, \psi_H^* - \psi\rangle + \frac 14\|\psi_H - \psi\|^4 \\
    &= \langle (\hat{H} - \lambda)\Psi_{\psi_H-\psi}, \psi_H^* - \psi_H\rangle + \langle (\hat{H} - \lambda)\Psi_{\psi_H-\psi}, \psi_H - \psi\rangle + \frac 14\|\psi_H - \psi\|^4 \\
    &= \langle (\hat{H} - \lambda)\Psi_{\psi_H-\psi}, \psi_H - \psi\rangle + \frac 14\|\psi_H - \psi\|^4.
  \end{align*}
  Therefore, for all $\Psi_H \in V_H$, it holds
  \begin{equation*}
    \|\psi_H - \psi\|^2 = \langle (\hat{H} - \lambda)(\psi_H - \psi), \Psi_{\psi_H-\psi} - \Psi_H \rangle + \langle (\hat{H} - \lambda)(\psi_H - \psi), \Psi_{H}\rangle + \frac 14\|\psi_H - \psi\|^4.
  \end{equation*}
  For the first term of the above equation, we obtain an estimate
  \begin{equation}
    \langle (\hat{H} - \lambda)(\psi_H - \psi), \Psi_{\psi_H-\psi} - \Psi_H \rangle \leq C \|\psi_H - \psi\|_1 \|\Psi_{\psi_H-\psi} - \Psi_H\|_1.
  \end{equation}
  Furthermore, let $\Psi_H \in V_H\cap \psi^{\bot}$, and we obtain
  \begin{align*}
    &\langle (\hat{H} - \lambda)(\psi_H - \psi), \Psi_{H}\rangle = \langle (\hat{H} - \lambda)\psi_H, \Psi_{H}\rangle - \langle (\hat{H} - \lambda) \psi, \Psi_{H}\rangle \\
    = &\langle (\hat{H} - \lambda)\psi_H, \Psi_{H}\rangle - 0
    = \left( (\lambda_{H} - \lambda)\psi_H, \Psi_{H}\right) - \left( (\lambda_{H} - \lambda)\psi, \Psi_{H}\right) \\
    = &(\lambda_{H} - \lambda)\left( (\psi_H - \psi), \Psi_{H} \right),
  \end{align*}
  which implies
  \begin{equation*}
    \left|\langle (\hat{H} - \lambda)(\psi_H - \psi), \Psi_{H}\rangle \right| \leq (\lambda_H - \lambda)\|\psi_H - \psi\|\|\Psi_{H}\|.
  \end{equation*}
  Then, for all $\Psi_H \in V_H\cap \psi^{\bot}$, we get
  \begin{equation*}
    \|\psi_H - \psi\|^2 \leq C \|\psi_H - \psi\|_1 \|\Psi_{\psi_H-\psi} - \Psi_H\|_1 + \|\psi_H - \psi\|_1^2 \|\psi_H - \psi\|\|\Psi_{H}\| + \frac 14\|\psi_H - \psi\|^4.
  \end{equation*}
  By \Cref{lem:L2-leq-H1-connection}, we have $\|\Psi_{\psi_H-\psi} - \Psi_H\|_1 \leq CH^{r-1} \|\psi_H-\psi\|$ for $\psi \in H^r(D)\cap H_P^1(D)$ with $1 \leq r \leq 2$. Hence, owing to \eqref{equ:subspace-convergence}, we can conclude that there exists $H_0 > 0$ and a positive constant $C$ such that for all $0 < H < H_0$,
  \begin{equation}
    \|\psi_H - \psi\| \leq CH^{r-1}\|\psi_H - \psi\|_1.
  \end{equation}
\end{proof}

Next, we estimate the MsFEM approximation error. Let $P_H$ be the classical $L^2$-projection onto $V_H$ and $W = ker(P_H) = \{v\in H_P^1(D)| P_H(v) = 0\}$ be the kernel space. There exists an orthogonal splitting $ H_P^1(D) = V_H \oplus W$,
in which $W$ captures the fine mesh details from $H_P^1(D)$ that are not captured by $V_H$. Similarity, denote
\begin{equation}
  V_{ms} = \{ v\in H_P^1(D) | a(v, w) = 0 \text{ for all } w\in W\},
\end{equation}
and wherein there is another orthogonal decomposition, namely
\begin{equation}
  H_P^1(D) = V_{ms} \oplus W.
\end{equation}
We then seek the eigenvalues and the eigenfunctions in $V_{ms}$ such that
\begin{equation}
  a(\psi_{ms}, \phi) = \lambda_{ms}(\psi_{ms}, \phi), \quad \forall \phi\in V_{ms}
  \label{equ:discretized-MsFEM-EVP_deterministic}
\end{equation}
with $\|\psi_{ms}\| = 1$.

We revisit the elliptic problem \eqref{equ:elliptic-problem}. Let $u \in H_P^1(D)$, and we have $u - u_{ms} \in W$, i.e., $a(u-u_{ms}, v) = 0$ for any $v \in V_{ms}$. Owing to this orthogonality,
\begin{equation}
  a(u_{ms}-u, w) = f(w), \quad \forall w \in W.
\end{equation}
Since $u_{ms} - u \in W \subset H_P^1(D)$, we have $P_H(u_{ms} - u) = 0$, which implies
\begin{equation}
  \|u_{ms} - u\| \leq \|u_{ms} - u - P_H(u_{ms} - u)\| \leq CH\|u_{ms} - u\|_1.
\end{equation}
Furthermore, let $\beta > 0$ denotes the coercivity constant of $a(\cdot, \cdot)$, and then the variational equation gives
\begin{equation*}
  \beta\|u_{ms} - u\|_1^2 \leq a(u_{ms} - u, u_{ms} - u) = f(u_{ms} - u).
\end{equation*}
Meanwhile, we also have
\begin{equation*}
  f(u_{ms} - u) = (f, u_{ms} - u) = (f - P_H(f), u_{ms} - u - P_H(u_{ms} - u)) \leq CH^3\|f\|_2\|u_{ms} - u\|_1.
\end{equation*}
These indicate
\begin{equation}
  \|u_{ms} - u\|_1 \leq CH^3\|f\|_2
\end{equation}
and
\begin{equation}
  \|u_{ms} - u\| \leq CH\|u_{ms} - u\|_1 \leq CH^4.
\end{equation}

Therefore, we obtain the error estimate of the MsFEM for the EVP of the deterministic Schr\"odinger operator.
\begin{theorem}
  \label{thm:convergence-rate-MsFEM}
  Let $\psi$ and $\psi_{ms}$ be the ground states of \eqref{equ:weak-from} and \eqref{equ:discretized-MsFEM-EVP_deterministic}, respectively. We have the approximation error
  \begin{equation}
    \|\psi_{ms} - \psi\|_1 \leq CH^3, \quad \|\psi_{ms} - \psi\| \leq CH^4,
    \label{equ:error-direct-MsFEM-EVP}
  \end{equation}
  and
  \begin{equation}
    |\lambda_{ms} - \lambda| \leq CH^6.
  \end{equation}
\end{theorem}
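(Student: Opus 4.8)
The plan is to exploit the fact, established just above, that the multiscale space $V_{ms}$ is superconvergent for the source problem \eqref{equ:elliptic-problem}, namely $\|u_{ms}-u\|_1\le CH^3\|f\|_2$ and $\|u_{ms}-u\|\le CH^4$, and then to transfer these rates to the eigenpair. The key observation is that the ground state $\psi$ solves the source problem $a(\psi,v)=(\lambda\psi,v)$ with data $f=\lambda\psi$. Under Assumption \ref{assump:assumption-for-potentials} and the convexity of $D$, elliptic regularity gives $\psi\in H^2(D)\cap H_P^1(D)$, while $\lambda$ is bounded by \eqref{equ:bound-of-eigenvalue}, so that $\|\lambda\psi\|_2\le C$. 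Introducing the elliptic projection $R_{ms}\psi\in V_{ms}$ defined by $a(R_{ms}\psi,v)=a(\psi,v)$ for all $v\in V_{ms}$, the two elliptic estimates apply verbatim with $u=\psi$ and $u_{ms}=R_{ms}\psi$, yielding the backbone bounds
\begin{equation*}
  \|\psi-R_{ms}\psi\|_1\le CH^3, \qquad \|\psi-R_{ms}\psi\|\le CH^4.
\end{equation*}

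The central step is to pass from the elliptic projection to the genuine discrete eigenfunction $\psi_{ms}$ and to establish $\|\psi_{ms}-\psi\|_1\le CH^3$. Setting $e=\psi_{ms}-R_{ms}\psi\in V_{ms}$, testing \eqref{equ:discretized-MsFEM-EVP_deterministic} against $v\in V_{ms}$ and using $a(R_{ms}\psi,v)=a(\psi,v)=\lambda(\psi,v)$ produces the error equation
\begin{equation*}
  a(e,v)-\lambda(e,v)=(\lambda_{ms}-\lambda)(\psi_{ms},v)+\lambda(R_{ms}\psi-\psi,v), \qquad\forall v\in V_{ms}.
\end{equation*}
I would split $e$ into its $\psi$-component and its part $e^{\bot}\in\psi^{\bot}$, test with $e^{\bot}$, and invoke the spectral gap estimate $\beta\|e^{\bot}\|_1^2\le\langle(\hat H-\lambda)e^{\bot},e^{\bot}\rangle$ recorded above to control $\|e^{\bot}\|_1$ by the right-hand side, which is of order $|\lambda_{ms}-\lambda|+\|R_{ms}\psi-\psi\|$. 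The component $(e,\psi)$ is controlled separately using the normalizations $\|\psi_{ms}\|=\|\psi\|=1$, which force $|(e,\psi)|$ to be quadratically small in $\|\psi_{ms}-\psi\|$. Combining these with the triangle inequality $\|\psi_{ms}-\psi\|_1\le\|e\|_1+\|\psi-R_{ms}\psi\|_1$ gives $\|\psi_{ms}-\psi\|_1\le CH^3$. Equivalently, this is the Babu\v{s}ka--Osborn mechanism: for a well-separated minimal eigenvalue the eigenfunction $H^1$-error is bounded by the best approximation error $\inf_{v\in V_{ms}}\|\psi-v\|_1\le\|\psi-R_{ms}\psi\|_1$.

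Granting the $H^1$ rate, the remaining two estimates are immediate. The $L^2$ rate follows from the preceding lemma, which gives $\|\psi_{ms}-\psi\|\le CH^{r-1}\|\psi_{ms}-\psi\|_1$ with $r=2$, hence $\|\psi_{ms}-\psi\|\le CH^4$; its hypothesis \eqref{equ:subspace-convergence} for $V_{ms}$ holds because $R_{ms}\psi\to\psi$ in $H_P^1(D)$. The eigenvalue bound then follows from the quadratic estimate \eqref{equ:eigenvalue-bounded-by-H1} applied with $V_H=V_{ms}$, namely $|\lambda_{ms}-\lambda|\le C\|\psi_{ms}-\psi\|_1^2\le CH^6$.

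The main obstacle will be the apparent circularity in the central step: the error equation bounds $\|e\|_1$ in terms of $|\lambda_{ms}-\lambda|$ and $\|\psi_{ms}-\psi\|$, which themselves depend on the quantity being estimated. The resolution is a bootstrap argument, exactly as in the induction of \Cref{lem:parametric-regularity}: the preceding lemma already guarantees $\|\psi_{ms}-\psi\|_1\to0$ and $|\lambda_{ms}-\lambda|\to0$ as $H\to0$, so for $H$ small enough these factors are subdominant and can be absorbed into the left-hand side, leaving $\|e\|_1\le C\|\psi-R_{ms}\psi\|$. One must also verify that the spectral gap constant $\beta$ and the decomposition onto $\psi^{\bot}$ remain available uniformly as $H\to0$; the self-adjointness of $\hat H$ together with the ordering of the eigenvalues in \eqref{equ:eigenvalues-Laplacian} and \eqref{equ:bound-of-eigenvalue} secures this.
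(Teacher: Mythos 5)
Your proposal follows essentially the same route as the paper: the superconvergent source-problem estimates $\|u_{ms}-u\|_1\le CH^3\|f\|_2$ and $\|u_{ms}-u\|\le CH^4$ applied with $f=\lambda\psi$, combined with the preceding lemma's bounds $\|\psi_H-\psi\|\le CH^{r-1}\|\psi_H-\psi\|_1$ and $|\lambda_H-\lambda|\le C\|\psi_H-\psi\|_1^2$ specialized to $V_{ms}$. In fact you are more complete than the paper, which passes directly from the elliptic estimates to the theorem: your error equation for $e=\psi_{ms}-R_{ms}\psi$, the spectral-gap/Babu\v{s}ka--Osborn step converting best-approximation error into the eigenfunction $H^1$ error, and the absorption argument for small $H$ are exactly the links the paper leaves implicit, and they are sound (modulo the standard care needed because $e^{\bot}\notin V_{ms}$).
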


\begin{remark}
  The convergence rate $\mathcal{O}(H^6)$ of the minimal eigenvalue can also be obtained via a high-order interpolation in Theorem 4.1 in~\cite{CiCP-24-1073}. These super-convergence rates stated in the theorem above are also demonstrated in \cite{doi:10.1137/22M1516300}, in which the LOD is applied to the nonlinear Schr\"odinger equation with a cubic term. The comparable approximation accuracy is due to the analogous orthogonal decomposition of $H_P^1(D)$ employed by the LOD method and the MsFEM introduced in this work.
\end{remark}

\subsection{Dimension truncation error}
Here we denote $\lambda_s = \lambda_s(\boldsymbol{\omega}_s; \mathbf{0})$ the truncated eigenvalue and $\psi_s = \psi_s(\boldsymbol{\omega}_s; \mathbf{0})$ the truncated eigenfunction. The truncation error with respect to $s$ is described as in \Cref{thm:truncated-approximation}.
\begin{proposition}[Theorem 4.1, \cite{gilbert2019analysis}]
  \label{thm:truncated-approximation}
  Suppose that \cref{assump:assumption-for-potentials} holds. There exist constants $C_1$, $C_2$, $C_3$, $C_4 > 0$ such that for sufficiently large $s$ and for all $\boldsymbol{\omega} \in \Omega$, the truncation errors of the minimal eigenvalue and the ground state are bounded with
  \begin{equation}
    |\lambda(\boldsymbol{\omega}) - \lambda_s(\boldsymbol{\omega}_s)| \leq C_1s^{-1/p + 1}, \quad
    \|\psi(\boldsymbol{\omega}) - \psi_s(\boldsymbol{\omega}_s)\|_1 \leq C_2s^{-1/p + 1}.
  \end{equation}
  Furthermore, the weak truncation error is bounded by
  \begin{equation}
    |\mathds{E}_{\boldsymbol{\omega}}[\lambda - \lambda_s]| \leq C_3 s^{-2/p + 1},
  \end{equation}
  and for any continuous linear functional $\mathcal{G} \in L^2(D; \Omega)$, we have
  \begin{equation}
    |\mathds{E}_{\boldsymbol{\omega}}[\mathcal{G}(\psi) - \mathcal{G}(\psi_s)]| \leq C_4 s^{-2/p + 1}.
  \end{equation}
  Here $C_1$, $C_2$, $C_3$ and $C_4$ are independent of $s$ and $\boldsymbol{\omega}$.
\end{proposition}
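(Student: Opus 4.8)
The plan is to separate the pathwise (deterministic) truncation bounds from the statistical (expectation) bounds, since the latter gains an extra factor of $s^{-1/p}$ through a cancellation of the first-order term. The starting point for everything is to control the perturbation of the potential. Writing $B(\mathbf{x}, \boldsymbol{\omega}) := V(\mathbf{x}, \boldsymbol{\omega}) - V(\mathbf{x}, \boldsymbol{\omega}_s) = \sum_{j>s}\omega_j v_j(\mathbf{x})$ and using $|\omega_j|\le \tfrac12$, I get $\|B\|_{L^{\infty}} \le \tfrac12\sum_{j>s}\|v_j\|_{L^{\infty}}$. Assuming (without loss of generality) that the $\|v_j\|_{L^{\infty}}$ are nonincreasing, \cref{assump:assumption-for-potentials}(2) gives $s\,\|v_s\|_{L^{\infty}}^p \le \sum_{j\le s}\|v_j\|_{L^{\infty}}^p =: M$, hence $\|v_s\|_{L^{\infty}}\le (M/s)^{1/p}$. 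A H\"older-type tail estimate then yields $\sum_{j>s}\|v_j\|_{L^{\infty}} \le M^{1/p}s^{-1/p+1}$ and, by the same computation with exponent $2$, $\sum_{j>s}\|v_j\|_{L^{\infty}}^2 \le M^{2/p}s^{-2/p+1}$. These two tail rates are precisely the sources of the two exponents appearing in the statement.

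For the pathwise eigenvalue bound I would use the Rayleigh quotient characterization: testing $\psi$ in $\mathcal{A}_s$ and $\psi_s$ in $\mathcal{A}$, and using $\mathcal{A}(\boldsymbol{\omega}; \phi, \phi) - \mathcal{A}_s(\boldsymbol{\omega}; \phi, \phi) = \int_D B\phi^2$ for normalized $\phi$, gives directly $|\lambda - \lambda_s| \le \|B\|_{L^{\infty}} \le C s^{-1/p+1}$. For the eigenfunction I would subtract the two weak forms to obtain, for all $v \in H_P^1(D)$, the identity $\mathcal{A}_{\lambda_1}(\boldsymbol{\omega}; \psi - \psi_s, v) = (\lambda - \lambda_s)(\psi_s, v) - \int_D B\,\psi_s v$, where $\lambda = \lambda_1$ is the minimal eigenvalue. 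Decomposing $\psi - \psi_s = c\,\psi + \eta$ with $\eta \in E(\boldsymbol{\omega}, \lambda_1(\boldsymbol{\omega}))^{\bot}$, noting $\mathcal{A}_{\lambda_1}(\boldsymbol{\omega}; \psi, \eta) = 0$, and testing with $v = \eta$, \Cref{lem:coercive-A-lambda} yields $C_{gap}\|\eta\|_1^2 \le \mathcal{A}_{\lambda_1}(\boldsymbol{\omega}; \psi - \psi_s, \eta) \le (|\lambda - \lambda_s| + \|B\|_{L^{\infty}})\chi_1^{-1/2}\|\eta\|_1$, so $\|\eta\|_1 \le C s^{-1/p+1}$. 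Since $|c| = |1 - (\psi, \psi_s)| = \tfrac12\|\psi - \psi_s\|^2$ is of higher order (via the Poincar\'e inequality $\|\psi-\psi_s\|\le \chi_1^{-1/2}\|\psi-\psi_s\|_1$), a short bootstrap absorbs it and gives $\|\psi - \psi_s\|_1 \le C_2 s^{-1/p+1}$.

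The main work, and the main obstacle, lies in the weak estimates, where the rate doubles to $s^{-2/p+1}$. Here I would exploit that $\lambda_s(\boldsymbol{\omega}_s)$ and $\psi_s(\boldsymbol{\omega}_s)$ depend only on $\boldsymbol{\omega}_s$, whereas the tail variables $(\omega_j)_{j>s}$ are i.i.d., mean-zero, and independent of $\boldsymbol{\omega}_s$. Viewing $\lambda(\boldsymbol{\omega})$ as a function of the tail variables and Taylor expanding about $(\omega_j)_{j>s} = \mathbf{0}$ (where it reduces to $\lambda_s(\boldsymbol{\omega}_s)$), the first-order term is $\sum_{j>s}\omega_j\,\partial_{\omega_j}\lambda\big|_{(\omega_{>s}=\mathbf{0})}$; each coefficient depends only on $\boldsymbol{\omega}_s$ and $\mathds{E}[\omega_j] = 0$, so this term vanishes in expectation. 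Thus $\mathds{E}_{\boldsymbol{\omega}}[\lambda - \lambda_s]$ equals the expectation of the quadratic remainder, whose mixed derivatives are bounded termwise by $C\|v_j\|_{L^{\infty}}\|v_k\|_{L^{\infty}}$ thanks to \Cref{lem:parametric-regularity} applied with $|\boldsymbol{\nu}| = 2$. Independence and mean-zero eliminate all cross terms $j \ne k$, leaving a diagonal sum controlled by $\mathds{E}[\omega_j^2]\sum_{j>s}\|v_j\|_{L^{\infty}}^2 \le C s^{-2/p+1}$.

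The functional estimate for $\mathcal{G}(\psi)$ follows the same template, replacing the eigenvalue derivative bounds by the $L^2$-bounds on $\partial_{\boldsymbol{\omega}}^{\boldsymbol{\nu}}\psi$ from \Cref{lem:parametric-regularity} and invoking the boundedness of $\mathcal{G}$ on $L^2(D; \Omega)$; the first-order term again has mean zero and the diagonal second-order sum is again governed by $\sum_{j>s}\|v_j\|_{L^{\infty}}^2$. The delicate point throughout is justifying the infinite-dimensional Taylor expansion and obtaining a uniform-in-$\boldsymbol{\omega}$ bound on the second-order remainder. The product-form derivative bounds of \Cref{lem:parametric-regularity}, together with the $\ell^p$-summability of $(\|v_j\|_{L^{\infty}})$ from \cref{assump:assumption-for-potentials}, are exactly the ingredients that make this rigorous and that, through the $\ell^p \hookrightarrow \ell^2$ tail improvement, upgrade the naive rate $s^{-2/p+2}$ to the claimed $s^{-2/p+1}$.
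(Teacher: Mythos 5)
The paper itself offers no proof of this proposition: it simply states that the argument is identical to that of Theorem 4.1 in \cite{gilbert2019analysis}, so the benchmark here is the cited proof. Your reconstruction follows the same strategy as that reference: the $\ell^p$-summability of $(\|v_j\|_{L^\infty}(D))_j$ converted into the tail rates $\sum_{j>s}\|v_j\|_{L^\infty}\lesssim s^{-1/p+1}$ and $\sum_{j>s}\|v_j\|_{L^\infty}^2\lesssim s^{-2/p+1}$; the Rayleigh-quotient comparison for the pathwise eigenvalue bound; the decomposition of $\psi-\psi_s$ along $E(\boldsymbol{\omega},\lambda_1)^{\bot}$ together with the spectral-gap coercivity of \Cref{lem:coercive-A-lambda} and a bootstrap on the quadratic component $|1-(\psi,\psi_s)|=\tfrac12\|\psi-\psi_s\|^2$ for the pathwise eigenfunction bound; and the vanishing of the first-order tail term in expectation, combined with the second-derivative bounds of \Cref{lem:parametric-regularity}, for the improved weak rates. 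All of these steps are sound and are exactly the ingredients used in \cite{gilbert2019analysis}.

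The one place where your argument as written does not close is the claim that ``independence and mean-zero eliminate all cross terms $j\neq k$'' in the second-order Taylor remainder. In the integral (Lagrange) form of the remainder the coefficients $\partial^{\boldsymbol{\nu}}\lambda$ are evaluated at intermediate points $t\,\omega_{>s}$, which depend on all tail variables, so $\mathds{E}[\omega_j\omega_k\,\partial^{\mathbf{e}_j+\mathbf{e}_k}\lambda(\cdot\,;t\omega_{>s})]$ does not factor and does not vanish; bounding the off-diagonal sum crudely gives only $(\sum_{j>s}\|v_j\|_{L^\infty})^2\lesssim s^{-2/p+2}$, which is weaker than the claimed $s^{-2/p+1}$ for all $p\in(0,1)$. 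Expanding to third order to freeze the quadratic coefficients at $\omega_{>s}=\mathbf{0}$ does make the cross terms vanish, but the cubic remainder then decays like $s^{-3/p+3}$, which is again too slow for $p\in(\tfrac12,1)$. The fix used in the reference is to stop at the first-order expansion $\lambda-\lambda_s=\int_0^1\sum_{j>s}\omega_j\,\partial_{\omega_j}\lambda(\boldsymbol{\omega}_s;t\omega_{>s})\,\mathrm{d}t$ and, for each $j$, to subtract the value of the integrand at $\omega_j=0$ before taking expectations: since $\mathds{E}[\omega_j\,g(\boldsymbol{\omega}|_{\omega_j=0})]=0$, one gets $|\mathds{E}[\omega_j\,\partial_{\omega_j}\lambda]|\leq\mathds{E}[\omega_j^2]\sup|\partial^{2\mathbf{e}_j}_{\boldsymbol{\omega}}\lambda|$, which produces exactly the diagonal sum $\sum_{j>s}\|v_j\|_{L^\infty}^2$ you want with no off-diagonal contribution at all. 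With that single repair (applied verbatim to $\mathcal{G}(\psi)$ using the $L^2$-derivative bounds), your proof is complete and coincides with the paper's intended argument.
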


\subsection{QMC error}

Given the regularity as in~\Cref{lem:parametric-regularity}, we derive the upper bound of the root-mean-square error for the qMC approximation.
\begin{proposition}
    [Theorem 4.2, \cite{gilbert2019analysis}]
  \label{thm:qMC-approximaation}
  Let $N \in \mathds{N}$ be prime, $\mathcal{G} \in L^2(D; \Omega)$. Suppose that \cref{assump:assumption-for-potentials} holds. The root-mean-square errors of a component-by-component generated randomly shifted lattice rule approximations of $\mathds{E}_{\boldsymbol{\omega}}[\lambda_s]$ and $\mathds{E}_{\boldsymbol{\omega}}[\mathcal{G}(\psi_s)]$ are bounded by
  \begin{equation}
    \sqrt{\mathds{E}_{\boldsymbol{\Delta}}\left[ |\mathds{E}_{\boldsymbol{\omega}}[\lambda_s] - Q_{N, s}\lambda_s|^2 \right]} \leq C_{1,\alpha} N^{-\alpha},
  \end{equation}
  and
  \begin{equation}
    \sqrt{\mathds{E}_{\boldsymbol{\Delta}}\left[ |\mathds{E}_{\boldsymbol{\omega}}[\mathcal{G}(\psi_s)] - Q_{N, s}\mathcal{G}(\psi_s)|^2 \right]} \leq C_{2,\alpha} N^{-\alpha},
  \end{equation}
  where
  \begin{equation}
    \label{equ:definition-alpha}
    \alpha = \left\{\begin{aligned}
      &1 - \delta, \quad \text{for arbitrary }\delta\in (0, \frac 12), \quad &p \in (0, \frac 23], \\
      &\frac 1p - \frac 12 &p \in (\frac 23, 1),
    \end{aligned}\right.
  \end{equation}
  and the constants $C_{1, \alpha}$ and $C_{2, \alpha}$ are independent of $s$.
\end{proposition}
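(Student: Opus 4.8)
The plan is to derive both estimates from a single computation by feeding the parametric regularity bounds of \Cref{lem:parametric-regularity} into the general root-mean-square error bound \eqref{equ:qMC-general-estimate} and then optimizing over the weights $\boldsymbol{\gamma}$. I would treat $f = \lambda_s$ as the model case; the estimate for $f = \mathcal{G}(\psi_s)$ follows identically once one observes that, by linearity and boundedness of $\mathcal{G}$ on $L^2(D)$, $\partial_{\boldsymbol{\omega}}^{\boldsymbol{\nu}}[\mathcal{G}(\psi_s)] = \mathcal{G}(\partial_{\boldsymbol{\omega}}^{\boldsymbol{\nu}}\psi_s)$ and hence $|\partial_{\boldsymbol{\omega}}^{\boldsymbol{\nu}}[\mathcal{G}(\psi_s)]| \leq \|\mathcal{G}\|_{L^2}\,\|\partial_{\boldsymbol{\omega}}^{\boldsymbol{\nu}}\psi_s\|$, so the $L^2$-bound in \eqref{equ:bound-eigenfunction} is exactly what is needed. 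Throughout, the truncation to $s$ variables only restricts the sums below to $\mathfrak{u}\subset\{1:s\}$, and all bounds will be uniform in $s$.

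First I would reduce the weighted Sobolev norm to the derivatives actually controlled. In \eqref{equ:qMC-general-estimate} only mixed \emph{first}-order derivatives enter, so only multi-indices of the form $\boldsymbol{\nu} = \mathbf{1}_{\mathfrak{u}}$ (the indicator of $\mathfrak{u}$) contribute; for these, $|\boldsymbol{\nu}| = |\mathfrak{u}|$ and $\prod_j(\|v_j\|_\infty)^{\nu_j} = \prod_{j\in\mathfrak{u}}\|v_j\|_\infty$. Plugging the bound \eqref{equ:bound-eigenvalue} of \Cref{lem:parametric-regularity} into the norm and using that the parameter domain has unit measure and the estimate is uniform in $\boldsymbol{\omega}$, I obtain
\[
  \|\lambda_s\|_{s,\boldsymbol{\gamma}}^2 \;\leq\; \sum_{\mathfrak{u}\subset\{1:s\}}\frac{b_{\mathfrak{u}}^2}{\gamma_{\mathfrak{u}}},
  \qquad
  b_{\mathfrak{u}} := \frac{C_1\,(|\mathfrak{u}|!)^{1+\epsilon}}{C_{gap}^{\,|\mathfrak{u}|-1}}\prod_{j\in\mathfrak{u}}\|v_j\|_\infty .
\]

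Next I would carry out the weight optimization. Writing $A_{\mathfrak{u}} := \big(2\zeta(2\eta)/(2\pi^2)^{\eta}\big)^{|\mathfrak{u}|}$ and squaring \eqref{equ:qMC-general-estimate}, the bound on $\mathds{E}_{\boldsymbol{\Delta}}[|I_s-Q_{N,s}|^2]$ is the product of $\big(\varphi(N)^{-1}\sum_{\mathfrak{u}}\gamma_{\mathfrak{u}}^{\eta}A_{\mathfrak{u}}\big)^{1/\eta}$ and $\sum_{\mathfrak{u}}b_{\mathfrak{u}}^2/\gamma_{\mathfrak{u}}$. A standard Lagrange argument minimizes this over $\gamma_{\mathfrak{u}}>0$ at $\gamma_{\mathfrak{u}}\propto (b_{\mathfrak{u}}^2/A_{\mathfrak{u}})^{1/(1+\eta)}$, making both factors proportional to the single quantity $\Sigma_\eta := \sum_{\mathfrak{u}}b_{\mathfrak{u}}^{2\eta/(1+\eta)}A_{\mathfrak{u}}^{1/(1+\eta)}$ and collapsing the right-hand side to $\varphi(N)^{-1/(2\eta)}\,\Sigma_\eta^{(1+\eta)/(2\eta)}$. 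Since $N$ is prime, $\varphi(N)=N-1$, which yields the $N^{-1/(2\eta)}$ decay. Note that this optimal choice automatically embeds an order-dependent (factorial) factor into $\gamma_{\mathfrak{u}}$, i.e.\ it is precisely the SPOD-weight construction.

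The crux—and the main obstacle—is to show $\Sigma_\eta$ is finite \emph{uniformly in $s$}, which simultaneously fixes the rate. Setting $\theta := 2\eta/(1+\eta)\in(\tfrac23,1]$, I would group subsets by cardinality and bound the elementary symmetric sum $\sum_{|\mathfrak{u}|=\ell}\prod_{j\in\mathfrak{u}}\|v_j\|_\infty^{\theta} \leq (\sum_j\|v_j\|_\infty^{\theta})^{\ell}/\ell!$, turning $\Sigma_\eta$ into a series in $\ell$ of the form $\sum_\ell (\ell!)^{(1+\epsilon)\theta-1}\,(c\,\Sigma_1)^\ell$ with $\Sigma_1=\sum_j\|v_j\|_\infty^{\theta}$. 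The factorial in $b_{\mathfrak{u}}$ is the genuine difficulty here; it is tamed by taking the regularity loss $\epsilon$ small so that $(1+\epsilon)\theta-1<0$, making the $\ell$-series converge super-exponentially (the detailed bookkeeping is as in \cite{gilbert2019analysis}). Finiteness of $\Sigma_1$ then forces the binding summability constraint $\theta\geq p$, i.e.\ $\eta\geq p/(2-p)$, which is guaranteed by \Cref{assump:assumption-for-potentials}(2). Optimizing the rate means taking $\eta$ as small as summability permits: for $p\in(\tfrac23,1)$ the constraint binds at $\eta = p/(2-p)\in(\tfrac12,1)$, giving $\alpha = 1/(2\eta) = 1/p - 1/2$; for $p\in(0,\tfrac23]$ it is slack down to the admissibility floor $\eta>\tfrac12$, so letting $\eta\to\tfrac12^+$ (and $\epsilon\to0^+$) gives $\alpha = 1-\delta$ for any $\delta\in(0,\tfrac12)$. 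The constants depend only on $\eta$, $C_1$, $C_{gap}$ and $\sum_j\|v_j\|_\infty^{p}$ (and on $\|\mathcal{G}\|_{L^2}$ in the second estimate), hence are independent of $s$, completing both bounds.
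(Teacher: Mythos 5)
Your argument is correct and is essentially a faithful reconstruction of the proof of Theorem 4.2 in \cite{gilbert2019analysis}, which is exactly the proof this paper adopts (the paper itself gives no independent argument, stating only that the proof carries over verbatim to the linear Schr\"odinger case). The key steps — restricting to first-order mixed derivatives $\boldsymbol{\nu}=\mathbf{1}_{\mathfrak{u}}$, inserting the regularity bounds of \Cref{lem:parametric-regularity}, choosing SPOD-type weights $\gamma_{\mathfrak{u}}\propto(b_{\mathfrak{u}}^2/A_{\mathfrak{u}})^{1/(1+\eta)}$, and verifying $s$-uniform summability via $\theta=2\eta/(1+\eta)\geq p$ with $\epsilon$ small enough that $(1+\epsilon)\theta<1$ — all match the cited analysis, including the resulting dichotomy for $\alpha$.
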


Since a particular case of the elliptic problem, the linear Schr\"odinger operator, is considered in this work, the
proofs of \cref{thm:truncated-approximation} and \cref{thm:qMC-approximaation} are the same as those of Theorem 4.1 and Theorem 4.2 presented in \cite{gilbert2019analysis}.

\subsection{POD error}
Since the multiscale basis is approximated by the POD basis, the analysis begins with the estimation of basis function approximation. Here we consistently assume that the solutions of optimal problems \eqref{equ:optimal-problem-objective}-\eqref{equ:optimal-problem} exist and are bounded.

\begin{lemma}
  Let \cref{assump:assumption-for-potentials} hold and $\omega^1, \omega^2 \in \Omega$. The multiscale basis functions $\phi_i(\omega^1), \phi_i(\omega^2)$ are obtained by solving the optimal problem \eqref{equ:optimal-problem-objective}-\eqref{equ:optimal-problem} with random potentials $V(\omega^1)$ and $V(\omega^2)$, respectively. Then it holds that
  \begin{equation*}
    \|\phi_i(\omega^1) - \phi_i(\omega^2)\| \leq C\|V(\omega^1) - V(\omega^2)\|_{\infty} \|\phi(\omega^l)\|,
  \end{equation*}
  where $l = 1, 2$ and $i = 1, \cdots, N_H$.
  \label{lem:gap-msfem-basis-potential}
\end{lemma}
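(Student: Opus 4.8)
The plan is to exploit the saddle-point (Lagrange-multiplier) characterization of the constrained minimizers and the fact that the two optimal problems differ only through the potential term of the bilinear form. Writing $\mathcal{A}(\omega;\cdot,\cdot)$ for the form built from the potential $V(\cdot,\omega)$, the minimizer $\phi_i(\omega)$ of the $V(\cdot,\omega)$-version of \eqref{equ:optimal-problem-objective}--\eqref{equ:optimal-problem} is characterized by its Euler--Lagrange system: there exist multipliers $\mu_j(\omega)$ with
\[
\mathcal{A}(\omega; \phi_i(\omega), v) = \sum_{j=1}^{N_H}\mu_j(\omega)\,(v, \phi_j^H), \qquad \forall\, v \in H_P^1(D),
\]
together with the constraints $(\phi_i(\omega), \phi_j^H) = \alpha\delta_{ij}$. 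First I would record that, since $\alpha$ and the coarse basis $\phi_j^H$ are deterministic, these constraints are \emph{identical} for $\omega^1$ and $\omega^2$; hence the difference $e := \phi_i(\omega^1) - \phi_i(\omega^2)$ satisfies $(e, \phi_j^H) = 0$ for every $j$, i.e. $e$ is $L^2$-orthogonal to $V_H$, so $e \in W = \ker P_H$.

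Next I would subtract the two Euler--Lagrange identities. Isolating the common form and using that the two forms differ only in the potential,
\[
\mathcal{A}(\omega^1; u, v) - \mathcal{A}(\omega^2; u, v) = \int_D \big(V(\omega^1) - V(\omega^2)\big)\,u\,v\,\mathrm{d}\mathbf{x},
\]
yields, for all $v \in H_P^1(D)$,
\[
\mathcal{A}(\omega^1; e, v) + \int_D \big(V(\omega^1) - V(\omega^2)\big)\,\phi_i(\omega^2)\,v\,\mathrm{d}\mathbf{x} = \sum_{j=1}^{N_H}\big(\mu_j(\omega^1) - \mu_j(\omega^2)\big)(v, \phi_j^H).
\]
The decisive step is to test with $v = e$: because $(e, \phi_j^H) = 0$, the entire multiplier sum vanishes, eliminating all dependence on the unknown $\mu_j$, and leaving
\[
\mathcal{A}(\omega^1; e, e) = -\int_D \big(V(\omega^1) - V(\omega^2)\big)\,\phi_i(\omega^2)\,e\,\mathrm{d}\mathbf{x}.
\]

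To conclude, I would apply the uniform coercivity \eqref{equ:coercive-A} on the left and H\"older's inequality on the right:
\[
c_1\|e\|_1^2 \le \mathcal{A}(\omega^1; e, e) \le \|V(\omega^1) - V(\omega^2)\|_{\infty}\,\|\phi_i(\omega^2)\|\,\|e\|.
\]
Using $\|e\| \le \|e\|_1$ and dividing by $\|e\|_1$ gives $\|e\| \le \|e\|_1 \le c_1^{-1}\|V(\omega^1) - V(\omega^2)\|_{\infty}\|\phi_i(\omega^2)\|$, which is the claim with $C = c_1^{-1}$ and $l = 2$; exchanging the roles of $\omega^1$ and $\omega^2$ yields the symmetric version with $l=1$. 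The main obstacle is the structural first step: one must confirm that the constrained minimizers genuinely satisfy the multiplier form of the optimality system (which uses the existence/boundedness hypotheses assumed in the paper) and that the constraint sets coincide, so that the choice $v=e$ cleanly annihilates the multiplier terms. Once that orthogonality is secured, coercivity and H\"older make the remaining estimate routine.
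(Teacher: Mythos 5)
Your argument is correct, and it takes a genuinely different route from the paper. The paper works entirely at the discrete matrix level: it writes the KKT system for the coefficient vector $\mathbf{c}_i$ on the fine mesh, inverts the block saddle-point matrix explicitly, and bounds $\mathbf{c}_i(\omega^1)-\mathbf{c}_i(\omega^2)$ via $\|G_1-G_2\|\lesssim \|V(\omega^1)-V(\omega^2)\|_\infty\|M^h\|$ together with scaling arguments of the form $G_{ij}\sim o(h^d)$, $A_{ij}\sim o(h^d)$ and a bound on the factor $\|A\|^2\|G_1^{-1}\|/\|AG_1^{-1}A^T\|$. Your proof instead stays at the variational level: since the constraints $(\phi_i(\omega),\phi_j^H)=\alpha\delta_{ij}$ are the same for both parameters, the difference $e$ is $L^2$-orthogonal to $V_H$, so testing the subtracted Euler--Lagrange identities with $v=e$ annihilates the Lagrange multipliers, and coercivity plus Cauchy--Schwarz finish the job. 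What your approach buys is substantial: the constant is explicitly $c_1^{-1}$ and manifestly independent of $h$, $i$ and the conditioning of the discrete matrices (quantities the paper must separately argue are controlled), and you in fact obtain the stronger $H^1$ bound $\|e\|_1\leq c_1^{-1}\|V(\omega^1)-V(\omega^2)\|_\infty\|\phi_i(\omega^l)\|$ for free, which the paper's $L^2$-only statement does not provide. The only points to make explicit are the ones you already flag: the KKT characterization holds because the objective is a coercive quadratic form and the linear constraint map is surjective (the $\phi_j^H$ being linearly independent), and the uniform coercivity \eqref{equ:coercive-A} is exactly the paper's standing assumption on $\mathcal{A}(\boldsymbol{\omega};\cdot,\cdot)$; if the optimization is posed over the fine-mesh space $V_h$ rather than all of $H_P^1(D)$, the identical argument runs with test functions restricted to $V_h$, since $e\in V_h$ there.
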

\begin{proof}
  The optimal problem \eqref{equ:optimal-problem-objective}-\eqref{equ:optimal-problem} can be equivalently formulated into a Karush-Kuhn-Tucker (KKT) equation. At $\mathbf{x}_i$, the corresponding KKT equation is
  \begin{equation*}
    \begin{pmatrix}
      G & -A^T \\
      A & O
    \end{pmatrix}\begin{pmatrix}
      \mathbf{c}_i \\ \boldsymbol{\lambda}_i
    \end{pmatrix} = \begin{pmatrix}
      \mathbf{0} \\ \mathbf{b}_i
    \end{pmatrix}
  \end{equation*}
  where $G$ is positive definite and
  \begin{equation*}
    \begin{pmatrix}
      G & -A^T \\
      A & O
    \end{pmatrix}^{-1} = \begin{pmatrix}
      G^{-1}-G^{-1}A^T(AG^{-1}A^T)^{-1}AG^{-1} & G^{-1}A^T(AG^{-1}A^T)^{-1} \\
      (G^{-1}A^T(AG^{-1}A^T)^{-1})^T  & -(AG^{-1}A^T)^{-1}
    \end{pmatrix}.
  \end{equation*}
  We therefore get the solution $\mathbf{c}_i = G^{-1}A^T(AG^{-1}A^T)^{-1}\mathbf{b}_i$. The matrix $G$ depends on the stochastic parameter, i.e.,
  \begin{equation*}
    \begin{pmatrix}
      G_1 & -A^T \\
      A & O
    \end{pmatrix}\begin{pmatrix}
      \mathbf{c}_i(\omega^1) \\ \boldsymbol{\lambda}_i(\omega^1)
    \end{pmatrix} = \begin{pmatrix}
      \mathbf{0} \\ \mathbf{b}_i
    \end{pmatrix}, \quad
    \begin{pmatrix}
      G_2 & -A^T \\
      A & O
    \end{pmatrix}\begin{pmatrix}
      \mathbf{c}_i(\omega^2) \\ \boldsymbol{\lambda}_i(\omega^2)
    \end{pmatrix} = \begin{pmatrix}
      \mathbf{0} \\ \mathbf{b}_i
    \end{pmatrix},
  \end{equation*}
  where $G_1 = G(\omega^1)$ and $G_2 = G(\omega^2)$.
  A straightforward derivation yields
  \begin{equation*}
    \begin{pmatrix}
      G_1 & -A^T \\
      A & O
    \end{pmatrix}\begin{pmatrix}
      \mathbf{c}_i(\omega^1) - \mathbf{c}_i(\omega^2)  \\ \boldsymbol{\lambda}_i(\omega^1) - \boldsymbol{\lambda}_i(\omega^2)
    \end{pmatrix} = \begin{pmatrix}
      (G_2 - G_1)\mathbf{c}_i(\omega^2) \\ \mathbf{0}
    \end{pmatrix},
  \end{equation*}
  which infers that
  \begin{equation}
    \mathbf{c}_i(\omega^1) - \mathbf{c}_i(\omega^2) = G_1^{-1} (G_2 - G_1)\mathbf{c}_i(\omega^2) - G_1^{-1}A^T(AG_1^{-1}A^T)^{-1}AG_1^{-1} (G_2 - G_1)\mathbf{c}_i(\omega^2).
    \label{equ:gap-basis-different-samples}
  \end{equation}

  Adopting the truncated expansion of random potentials yields
  \begin{equation*}
    G_{ij} = \frac{\epsilon^2}{2}(\nabla\phi_i^h, \nabla\phi_j^h) + (v_0\phi_i^h, \phi_j^h) + \sum_{k=1}^{s} \omega_k(v_k\phi_i^h, \phi_j^h).
  \end{equation*}
  Then we get
  \begin{equation*}
    \delta G_{ij} = G_{ij}(\omega^1) - G_{ij}(\omega^2) = \sum_{k=1}^{s} (\omega^1_k - \omega^2_k)(v_k\phi_i^h, \phi_j^h).
  \end{equation*}
  Since $\|v_k(\mathbf{x})\|_{\infty}$ is bounded, we have $|(v_k\phi_i^h, \phi_j^h)| \leq Ch^d$. Let $h \leq \epsilon$, we also have $G_{ij} \sim o(h^d)$. Consequently, for bounded potentials $V(\omega^1)$ and $V(\omega^2)$, it holds
  $$|(V(\omega^1) - V(\omega^2)\phi_i^h, \phi_j^h)| \leq C\|V(\omega^1) - V(\omega^2)\|_{\infty}(\phi_i^h, \phi_j^h).$$
  We then deduce that $\|G_1 - G_2\| \leq C\|V(\omega^1) - V(\omega^2)\|_{\infty}\|M^h\|$. Now go back to \eqref{equ:gap-basis-different-samples}, and we obtain
  \begin{align*}
    \|\mathbf{c}_i(\omega^1) - \mathbf{c}_i(\omega^2)\|
    &\leq \frac{\|G_1 - G_2\|\|\mathbf{c}_i(\omega^2)\|}{\|G_1\|}\left( 1 + \frac{\|A\|^2\|G_1^{-1}\|}{\|AG_1^{-1}A^T\|}\right) \\
    &\leq C\|V(\omega^1) - V(\omega^2)\|_{\infty}\|\mathbf{c}_i(\omega^2)\|\left( 1 + \frac{\|A\|^2\|G_1^{-1}\|}{\|AG_1^{-1}A^T\|}\right).
  \end{align*}
  Since $A_{ij} \sim o(h^d)$, there exists a positive constant $C$ such that
  \begin{equation*}
    \|\mathbf{c}_i(\omega^1) - \mathbf{c}_i(\omega^2)\| \leq C\|V(\omega^1) - V(\omega^2)\|_{\infty} \|\mathbf{c}_i(\omega^2)\|.
  \end{equation*}
  This bound holds uniformly for $i = 1, \cdots, N_h$. Denote $\Phi = (\phi_1^h, \cdots, \phi_{N_h}^h)$, and then $\phi_i(\omega^l) = \Phi\mathbf{c}_i(\omega^l)$ $(l = 1,2)$. Since finite-dimensional spaces are considered in this proof, we readily deduce that
  \begin{equation*}
    \|\phi_i(\omega^1) - \phi_i(\omega^2)\| \leq C\|V(\omega^1) - V(\omega^2)\|_{\infty} \|\phi_i(\omega^l)\|,
  \end{equation*}
  where $l = 1,2$ and $C$ is independent of potentials.
  This completes the proof.
\end{proof}


In the offline stage of \Cref{alg:MsFEM-POD-random-EVP},
for all $i = 1, \cdots, N_H$, we construct the reduced POD basis $\{\zeta_i^1(\mathbf{x}), \cdots, \zeta_i^{m_i}(\mathbf{x})\}$ with $m_i \ll Q$. According to the Proposition 1~\cite{Kunisch2001}, we have for all $\ell \leq m_i$
\begin{equation}
  \frac 1{Q}\sum_{j=1}^Q\left\| \tilde{\phi}_i(\omega^j) - \sum_{k=1}^{\ell}(\tilde{\phi}_i(\omega^j), \zeta_i^k)\zeta_i^k \right\|^2 = \sum_{\ell + 1}^{m_i}\sigma_k,
\end{equation}
which infers that there exists a constant $C$ such that for all $j \in \{1, \cdots, Q\}$,
\begin{equation}
  \left\| \tilde{\phi}_i(\omega^j) - \sum_{k=1}^{\ell}(\tilde{\phi}_i(\omega^j), \zeta_i^k)\zeta_p^k \right\|^2 \leq C \sum_{\ell + 1}^{m_i}\sigma_k.
  \label{equ:POD-error-random-potential}
\end{equation}

Next, we find the optimal approximation of the multiscale basis for random potentials in the space $V_{ms, i}^{pod} = span\{\zeta_i^0(\mathbf{x}), \zeta_i^1(\mathbf{x}), \cdots, \zeta_i^{m_i}(\mathbf{x})\}$ with the form
\begin{equation}
  \hat{\phi}_i(\mathbf{x}, \boldsymbol{\omega}) = \sum_{j=0}^{m_i}c_j(\boldsymbol{\omega})\zeta_i^j(\mathbf{x}).
\end{equation}
For any given stochastic variable $\boldsymbol{\omega}$, the optimal problems \eqref{equ:optimal-problem-objective}-\eqref{equ:optimal-problem} and \eqref{equ:optimal-problem2} can be equivalently written as
\begin{gather}
  \phi_i(\mathbf{x}, \boldsymbol{\omega}) = \argmin_{\phi \in H_P^1(D), (\phi, \phi_j^{H}) = \alpha\delta_{ij}} \frac{\epsilon^2}{2}\|\nabla\phi\|^2 + (V(\mathbf{x}, \boldsymbol{\omega})\phi, \phi),  \label{equ:MsFEM-equality-form} \\
  \hat{\phi}_i(\mathbf{x}, \boldsymbol{\omega}) = \argmin_{\phi \in V_{ms,i}^{pod}, (\phi, \phi_i^{H}) = \alpha} \frac{\epsilon^2}{2}\|\nabla\phi\|^2 + (V(\mathbf{x}, \boldsymbol{\omega})\phi, \phi).  \label{equ:MsFEM-POD-equality-form}
\end{gather}
Due to $V_{ms,i}^{pod} \subset H_P^1(D)$, we consider the optimal approximation problem
\begin{equation}
  \hat{\phi}_{i}(\mathbf{x}, \boldsymbol{\omega}) = \arginf_{\phi\in V_{ms,i}^{pod}, (\phi, \phi_i^H) = \alpha}\| \phi(\mathbf{x}, \boldsymbol{\omega}) - \phi_i(\mathbf{x}, \boldsymbol{\omega}) \|,
  \label{equ:optimal-approximation-POD-basis}
\end{equation}
and get the below lemma.

\begin{lemma}
  Given $\boldsymbol{\omega} \in \Omega$, let $\phi_i(\mathbf{x}, \boldsymbol{\omega})$ and $\hat{\phi}_i(\mathbf{x}, \boldsymbol{\omega})$ be the solutions of \eqref{equ:MsFEM-equality-form} and \eqref{equ:MsFEM-POD-equality-form}, respectively. For sufficiently small $h$, it holds
  \begin{equation}
    \|\phi_i(\mathbf{x}, \boldsymbol{\omega}) - \hat{\phi}_i(\mathbf{x}, \boldsymbol{\omega})\| \leq C\sqrt{\rho},
    \label{equ:POD-approximation-basis}
  \end{equation}
  where $i = 1, \cdots, N_H$ and $C$ is a constant independent of $\boldsymbol{\omega}$ and mesh size $h$.
  \label{prop:L2-gap-msfem-pod-basis}
\end{lemma}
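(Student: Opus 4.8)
The plan is to bound the best-approximation error in $L^2$ directly through the auxiliary problem \eqref{equ:optimal-approximation-POD-basis}, so that $\|\phi_i(\mathbf{x},\boldsymbol{\omega}) - \hat{\phi}_i(\mathbf{x},\boldsymbol{\omega})\|$ is realized as the distance from $\phi_i(\mathbf{x},\boldsymbol{\omega})$ to the feasible set inside $V_{ms,i}^{pod}$. First I would exploit the orthogonality relations established before \eqref{equ:optimal-problem2}, namely $(\zeta_i^0, \phi_i^H) = \alpha$ and $(\zeta_i^k, \phi_i^H) = 0$ for $k \geq 1$: they imply that every element of the form $\zeta_i^0 + \sum_{k=1}^{m_i} c_k \zeta_i^k$ automatically satisfies the single active constraint $(\phi, \phi_i^H) = \alpha$. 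Hence the feasible set is the affine subspace $\zeta_i^0 + \mathrm{span}\{\zeta_i^1,\cdots,\zeta_i^{m_i}\}$, and, writing $\tilde{\phi}_i(\boldsymbol{\omega}) = \phi_i(\boldsymbol{\omega}) - \zeta_i^0$ and letting $\Pi$ be the $L^2$-orthogonal projection onto $\mathrm{span}\{\zeta_i^1,\cdots,\zeta_i^{m_i}\}$, the candidate $\zeta_i^0 + \Pi\tilde{\phi}_i(\boldsymbol{\omega})$ is feasible and yields, by optimality of $\hat{\phi}_i$,
\begin{equation*}
  \|\phi_i(\mathbf{x},\boldsymbol{\omega}) - \hat{\phi}_i(\mathbf{x},\boldsymbol{\omega})\| \leq \|\tilde{\phi}_i(\boldsymbol{\omega}) - \Pi\tilde{\phi}_i(\boldsymbol{\omega})\|.
\end{equation*}
The task thus reduces to bounding the POD projection error of the fluctuation $\tilde{\phi}_i(\boldsymbol{\omega})$ uniformly in $\boldsymbol{\omega}$.

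For the snapshot parameters $\boldsymbol{\omega}^j$ this is immediate: the truncation rule $\sum_{k>\ell}\sigma_k < \rho\sum_{k}\sigma_k$ together with the assumed boundedness of the multiscale basis functions gives $\sum_{k>\ell}\sigma_k \leq C\rho$, so that \Cref{prop:POD-approximation} and \eqref{equ:POD-error-random-potential} furnish $\|\tilde{\phi}_i(\boldsymbol{\omega}^j) - \Pi\tilde{\phi}_i(\boldsymbol{\omega}^j)\| \leq C\sqrt{\rho}$ for every $j$, with $C$ independent of $h$.

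To pass to an arbitrary $\boldsymbol{\omega}$ I would insert a nearby snapshot $\boldsymbol{\omega}^j$ and use that $\Pi$ is a contraction:
\begin{equation*}
  \|\tilde{\phi}_i(\boldsymbol{\omega}) - \Pi\tilde{\phi}_i(\boldsymbol{\omega})\| \leq \|(I-\Pi)(\tilde{\phi}_i(\boldsymbol{\omega}) - \tilde{\phi}_i(\boldsymbol{\omega}^j))\| + \|\tilde{\phi}_i(\boldsymbol{\omega}^j) - \Pi\tilde{\phi}_i(\boldsymbol{\omega}^j)\|.
\end{equation*}
Since $\tilde{\phi}_i(\boldsymbol{\omega}) - \tilde{\phi}_i(\boldsymbol{\omega}^j) = \phi_i(\boldsymbol{\omega}) - \phi_i(\boldsymbol{\omega}^j)$ (the mean $\zeta_i^0$ cancels), the first term is controlled by \Cref{lem:gap-msfem-basis-potential}, giving $C\|V(\boldsymbol{\omega}) - V(\boldsymbol{\omega}^j)\|_{\infty}\|\phi_i(\boldsymbol{\omega}^j)\|$, while the second is the already-established $C\sqrt{\rho}$. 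The uniform boundedness of $\|\phi_i(\boldsymbol{\omega}^j)\|$ and the summability of $\|v_j\|_\infty$ from \Cref{assump:assumption-for-potentials} keep the Lipschitz constant finite and $h$-independent, and choosing the snapshot ensemble representative enough that $\|V(\boldsymbol{\omega})-V(\boldsymbol{\omega}^j)\|_\infty \lesssim \sqrt{\rho}$ for the nearest snapshot closes the estimate with $C$ independent of $\boldsymbol{\omega}$ and $h$.

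The main obstacle is precisely this last step: the POD estimate \eqref{equ:POD-error-random-potential} is a statement about the snapshot ensemble only, so making the final bound uniform over all $\boldsymbol{\omega}\in\Omega$ hinges on the stability of the basis map supplied by \Cref{lem:gap-msfem-basis-potential} and on the snapshots covering the effective parameter region finely. The decay hypotheses in \Cref{assump:assumption-for-potentials} make this feasible, since the tail coordinates $\omega_j$ (for large $j$) perturb $V$ negligibly in the $L^\infty$ norm and only the leading, finitely many coordinates must be resolved by the samples; verifying the $h$-independence of all constants (in particular of the quasi-interpolation factor $\alpha$ and of the constant in \Cref{lem:gap-msfem-basis-potential}) is the remaining bookkeeping.
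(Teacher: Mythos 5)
Your proposal follows essentially the same skeleton as the paper's proof: treat the snapshot parameters $\boldsymbol{\omega}\in\Omega_0$ directly via the POD bound \eqref{equ:POD-error-random-potential}, then handle a general $\boldsymbol{\omega}$ by inserting a nearby snapshot $\omega^j$, invoking the Lipschitz stability of the basis map from \Cref{lem:gap-msfem-basis-potential}, and reusing the snapshot bound. Your reformulation of the feasible set as the affine subspace $\zeta_i^0+\mathrm{span}\{\zeta_i^1,\dots,\zeta_i^{m_i}\}$ and the use of the $L^2$-projection $\Pi$ is a cleaner way of saying what the paper does implicitly when it takes $c_i^j(\boldsymbol{\omega})=(\tilde\phi_i,\zeta_i^j)$, and it makes explicit that the argument really bounds the best approximation in the sense of \eqref{equ:optimal-approximation-POD-basis} rather than the energy minimizer of \eqref{equ:MsFEM-POD-equality-form} named in the statement. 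The one genuine divergence is the final absorption step: you close the estimate by demanding that the snapshot ensemble be $\sqrt{\rho}$-dense, so that $\|V(\boldsymbol{\omega})-V(\omega^j)\|_\infty\lesssim\sqrt{\rho}$, whereas the paper instead uses the scaling $\|\phi_i(\omega^j)\|\le Ch^d$ to write the cross term as $C\|\boldsymbol{\omega}-\omega^j\|_\infty h^d$ and absorbs it by taking $h$ sufficiently small --- which is precisely what the hypothesis ``for sufficiently small $h$'' in the lemma refers to. Your covering requirement is the more transparent assumption (it is a genuine condition on the offline sampling, which the paper leaves implicit), while the paper's device explains the role of $h$ in the statement; both routes share the same underlying issue of uniformity over $\Omega\setminus\Omega_0$, which you correctly identify as the crux.
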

\begin{proof}
  Denote $\Omega_0 = \{\omega^j\}_{j=1}^Q \subset \Omega$, and consider $\boldsymbol{\omega} \in \Omega_0$. According to \eqref{equ:POD-error-random-potential}, it is obvious that
  \begin{align*}
    \|\phi_i(\mathbf{x}, \boldsymbol{\omega}) - \hat{\phi}_i(\mathbf{x}, \boldsymbol{\omega})\| = \left\| \tilde{\phi}_i(\mathbf{x}, \boldsymbol{\omega}) -  \sum_{j=1}^{m_i}c_i^j(\boldsymbol{\omega})\zeta_i^j(\mathbf{x})\right\| \leq C\sqrt{\rho},
  \end{align*}
  where $c_i^j(\boldsymbol{\omega}) = (\tilde{\phi}_i, \zeta_i^j)$. We next consider $\boldsymbol{\omega} \in \Omega/\Omega_0$. For any $j \in \{1, \cdots, Q\}$, we have
  \begin{align*}
    &\|\phi_i(\mathbf{x}, \boldsymbol{\omega}) - \hat{\phi}_i(\mathbf{x}, \boldsymbol{\omega})\| \leq \|\phi_i(\mathbf{x}, \boldsymbol{\omega}) - \phi_i(\mathbf{x}, \omega^j)\| + \|\phi_i(\mathbf{x}, \omega^j) - \hat{\phi}_i(\mathbf{x}, \boldsymbol{\omega})\| \\
    \leq & C\|V(\mathbf{x}, \boldsymbol{\omega}) - V(\mathbf{x}, \omega^j)\|_{\infty}\|\phi_i(\mathbf{x}, \omega^j)\| + \left\| \tilde{\phi}_i(\mathbf{x}, \omega^j) - \sum_{k=1}^{m_i}c_k(\boldsymbol{\omega})\zeta_i^k(\mathbf{x}) \right\|.
  \end{align*}
  Owing to the boundedness of $V(\mathbf{x}, \boldsymbol{\omega})$ and $\|\phi_i(\mathbf{x}, \omega^j)\| \leq Ch^d$, it holds
  \begin{equation}
    \|\phi_i(\mathbf{x}, \boldsymbol{\omega}) - \hat{\phi}_i(\mathbf{x}, \boldsymbol{\omega})\| \leq C\|\boldsymbol{\omega} - \omega^j\|_{\infty}\| h^d + C\sqrt{\rho}.
  \end{equation}
  Let $h$ be sufficiently small, and we get \eqref{equ:POD-approximation-basis}. This completes the proof.
\end{proof}

Furthermore, consider the finite-dimensional representations
\begin{equation*}
  \phi_i(\mathbf{x}, \boldsymbol{\omega}) = \sum_{j=1}^{N_h}c_i^j(\boldsymbol{\omega})\phi_j^h, \quad \hat{\phi}_i(\mathbf{x}, \boldsymbol{\omega}) = \sum_{j=1}^{N_h}\hat{c}_i^j(\boldsymbol{\omega})\phi_j^h.
\end{equation*}
According to the $L^2$-bound in \Cref{prop:L2-gap-msfem-pod-basis}, there exists a constant $C$ such that
\begin{equation}
  \label{equ:H1-approximation-multiscale-basis-POD}
  \| \nabla\phi_i(\mathbf{x}, \boldsymbol{\omega}) - \nabla\hat{\phi}_i(\mathbf{x}, \boldsymbol{\omega}) \| \leq \frac{C\sqrt{\rho}}{h^2}.
\end{equation}

\begin{remark}
  For the $H^1$-error of the multiscale basis approximation, we can also consider the POD method in $H^1(D)$ (see example in \cite{refId0}), which shall provide a better estimation for \eqref{equ:H1-approximation-multiscale-basis-POD}.
\end{remark}

Next, we consider the approximation of the equation $a(u, v) = f(v)$ by MsFEM and POD-MsFEM.
Similar to \cite{MA2020112635}, we consider the algebraic equations constructed by the MsFEM and the MsFEM-POD, respectively. Denote $\mathbf{G}_{ij} = \frac{\epsilon^2}{2}(\nabla\phi_i, \nabla\phi_j) + (V(\mathbf{x}, \boldsymbol{\omega})\phi_i, \phi_j)$ and $\mathbf{f}_{i} = (f, \phi_i)$, and we get the algebraic equation discretized by the MsFEM as
\begin{equation}
  \mathbf{G} \mathbf{u} = \mathbf{f},
\end{equation}
The counterpart approximated by the MsFEM-POD method is
\begin{equation}
  \mathbf{\hat G} \mathbf{\hat{u}} = \mathbf{\hat f},
\end{equation}
where $\mathbf{\hat G}_{ij} = \frac{\epsilon^2}{2}(\nabla\hat{\phi}_i, \nabla\hat{\phi}_j) + (V(\mathbf{x}, \boldsymbol{\omega})\hat{\phi}_i, \hat{\phi}_j)$ and $\mathbf{\hat f}_{i} = (f, \hat{\phi}_i)$. Owing to \cref{assump:assumption-for-potentials}, we get
\begin{align*}
  |\mathbf{G}_{ij} - \mathbf{\hat G}_{ij}| &= \frac{\epsilon^2}{2h^2}|(\phi_i, \phi_j) - (\hat{\phi}_i, \hat{\phi}_j)| + |(V(\mathbf{x}, \boldsymbol{\omega})\phi_i, \phi_j) - (V(\mathbf{x}, \boldsymbol{\omega})\hat{\phi}_i, \hat{\phi}_j)| \\
  &\leq \left(\frac{\epsilon^2}{2h^2} + \|V(\mathbf{x}, \boldsymbol{\omega})\|_{\infty}\right)(\|\phi_i\| + \|\hat{\phi}_j\|)\sqrt{\rho} \\
  &\leq CH^{d/2}\sqrt{\rho}.
\end{align*}

Define \(\mathbf{E}\) as the error between \(\mathbf{G}\) and \(\mathbf{\hat{G}}\), i.e., \(\mathbf{E} = \mathbf{G} - \mathbf{\hat{G}}\), as well as \(\mathbf{e}_f\) as the error such that \(\mathbf{e}_f = \mathbf{f} - \mathbf{\hat{f}}\). We can see that $|\mathbf{e}_{f, i}| \leq \|f\|\sqrt{\rho}$.
Consequently, we obtain
\begin{equation}
  \| \mathbf{u} - \mathbf{\hat u} \| = \|\mathbf{G}^{-1}( \mathbf{e}_f - \mathbf{E}\mathbf{\hat u})\| \leq \frac{1}{\|G\|}(\|\mathbf{e}_f\| + \|\mathbf{E}\| \|\mathbf{\hat u}\|) \leq C_1\sqrt{\rho},
\end{equation}
where $C_1$ depends on the bounds of $\|f\|$, $\|\mathbf{\hat{u}}\|$, $\|\mathbf{G}\|$ and $H$. Since $u_{ms} = \sum_{i=1}^{N_H}u_i\phi_i$ and $u_{ms}^{pod} = \sum_{i=1}^{N_H}\hat{u}_i\hat{\phi}_i$, we further get
\begin{align}
  \|u_{ms} - u_{ms}^{pod}\| &\leq \left\| \sum_{i=1}^{N_H}u_i\phi_i - \sum_{i=1}^{N_H} {u}_i\hat{\phi}_i \right\| + \left\| \sum_{i=1}^{N_H} {u}_i\hat{\phi}_i - \sum_{i=1}^{N_H}\hat{u}_i\hat{\phi}_i \right\| \nonumber\\
  &\leq \|\mathbf{u}\|\max_{1\leq i \leq N_H}\| \phi_i - \hat{\phi}_i \| + \sqrt{\sum_{i=1}^{N_H}\|\hat{\phi}_i\|^2}\|\mathbf{u} - \mathbf{\hat u}\| \label{equ:approximation-L2-error-POD-elliptic} \\
  &\leq C_2\sqrt{\rho}, \nonumber
\end{align}
where $C_2$ depends on $H$ and $\|\mathbf{u}\|$. Meanwhile, we also have
\begin{align*}
  \|\nabla u_{ms} - \nabla u_{ms}^{pod}\| &\leq \left\| \sum_{i=1}^{N_H}u_i\nabla\phi_i - \sum_{i=1}^{N_H} {u}_i\nabla\hat{\phi}_i \right\| + \left\| \sum_{i=1}^{N_H} {u}_i\nabla\hat{\phi}_i - \sum_{i=1}^{N_H}\hat{u}_i\nabla\hat{\phi}_i \right\| \\
  &\leq \|\mathbf{u}\|\max_{1\leq i \leq N_H}\| \nabla\phi_i - \nabla\hat{\phi}_i \| + \sqrt{\sum_{i=1}^{N_H}\|\nabla\hat{\phi}_i\|^2}\|\mathbf{u} - \mathbf{\hat u}\| \\
  &\leq C_3\sqrt{\rho},
\end{align*}
where $C_3$ depends on $\|\mathbf{u}\|$, $h$ and $C_1$. Note that $\|\nabla\hat{\phi}_i\|$ are bounded due to the solvability of optimization problems. Therefore, there exists a constant $C$ such that
\begin{equation}
  \|u_{ms} - u_{ms}^{pod}\|_1 \leq C\sqrt{\rho}.
  \label{equ:approximation-H1-error-POD-elliptic}
\end{equation}

Next, consider the EVP approximated by the MsFEM-POD and MsFEM
\begin{equation}
  \mathcal{A}(\boldsymbol{\omega}; \psi_{ms}^{pod}, v) = \lambda_{ms}^{pod}(\psi_{ms}^{pod}, v), \;\forall v \in V_{ms}^{pod},
  \label{equ:discretization-MsFEM-POD}
\end{equation}
and
\begin{equation}
  \mathcal{A}(\boldsymbol{\omega}; \psi_{ms}, v) = \lambda_{ms}(\psi_{ms}, v), \;\forall v \in V_{ms}.
\end{equation}
A direct derivation similar to \eqref{equ:approximation-L2-error-POD-elliptic} and \eqref{equ:approximation-H1-error-POD-elliptic} yields
\begin{equation}
    \|\psi_{ms}^{pod} - \psi_{ms}\|, \|\psi_{ms}^{pod} - \psi_{ms}\|_1 \leq C\sqrt{\rho}.
    \label{equ:error-MsFEM-MsFEM-POD-for-EVP}
\end{equation}

The approximation error of the MsFEM-POD for the EVP \eqref{equ:weak-form-random-EVP} is estimated as the following theorem.
\begin{theorem}
  \label{thm:error-MsFEM-POD}
  Let $\psi_{ms}^{pod}$ and $\lambda_{ms}^{pod}$ be the solution of the discretized form ~\eqref{equ:discretization-MsFEM-POD}, we have
  \begin{equation}
    \|\psi_{ms}^{pod} - \psi\|_1 \leq C(H^3 + \sqrt{\rho}), \quad \|\psi_{ms}^{pod} - \psi\| \leq C(H^4 + \sqrt{\rho}),
    \label{equ:error-eigenstate-msfem-pod-EVP}
  \end{equation}
  and
  \begin{equation}
    |\lambda_{ms}^{pod} - \lambda| \leq C(H^6 + \rho).
  \end{equation}
\end{theorem}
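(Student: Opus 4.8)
The plan is to derive the result by combining the two estimates already in hand: the pure multiscale error bounds of \Cref{thm:convergence-rate-MsFEM} and the POD perturbation bound \eqref{equ:error-MsFEM-MsFEM-POD-for-EVP}. The eigenfunction estimates follow at once from a triangle inequality, whereas the eigenvalue estimate requires a quadratic Rayleigh-quotient identity so that the $\sqrt{\rho}$ present in the eigenfunction error is upgraded to $\rho$.

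For the eigenfunctions, I would insert $\psi_{ms}$ and split
\[
  \|\psi_{ms}^{pod} - \psi\|_1 \leq \|\psi_{ms}^{pod} - \psi_{ms}\|_1 + \|\psi_{ms} - \psi\|_1 .
\]
The first term is bounded by $C\sqrt{\rho}$ via \eqref{equ:error-MsFEM-MsFEM-POD-for-EVP} and the second by $CH^3$ via \eqref{equ:error-direct-MsFEM-EVP}, which gives the $H^1$-estimate; the identical split in the $L^2$-norm, using the $H^4$ rate in \eqref{equ:error-direct-MsFEM-EVP}, gives the second bound. Here I would first fix the sign of each discrete eigenfunction so that its $L^2$-inner product with $\psi$ is nonnegative, ensuring the differences measure the genuine approximation error rather than an arbitrary phase.

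For the eigenvalue, set $e := \psi_{ms}^{pod} - \psi$ with $\|\psi\| = \|\psi_{ms}^{pod}\| = 1$. Since $\lambda_{ms}^{pod} = \mathcal{A}(\boldsymbol{\omega}; \psi_{ms}^{pod}, \psi_{ms}^{pod})$ and $\lambda = \mathcal{A}(\boldsymbol{\omega}; \psi, \psi)$, expanding the bilinear form and using the weak form $\mathcal{A}(\boldsymbol{\omega}; \psi, e) = \lambda(\psi, e)$ together with the normalization identity $2(\psi, e) = -\|e\|^2$ yields the exact relation
\[
  \lambda_{ms}^{pod} - \lambda = \mathcal{A}(\boldsymbol{\omega}; e, e) - \lambda\|e\|^2 = \mathcal{A}_{\lambda}(\boldsymbol{\omega}; e, e).
\]
Bounding the right-hand side by the boundedness \eqref{equ:upper-bounded-A} and the Poincar\'e inequality \eqref{equ:Poincare-inequality}, namely $|\mathcal{A}_{\lambda}(\boldsymbol{\omega}; e, e)| \leq (c_2 + \lambda\chi_1^{-1})\|e\|_1^2 \leq C\|e\|_1^2$, and inserting the $H^1$-estimate just proved gives $|\lambda_{ms}^{pod} - \lambda| \leq C(H^3 + \sqrt{\rho})^2 = C(H^6 + H^3\sqrt{\rho} + \rho)$. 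The cross term is absorbed through $H^3\sqrt{\rho} \leq \tfrac12(H^6 + \rho)$, producing the claimed $C(H^6 + \rho)$.

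Since the computations are short, the main point to watch is not any single estimate but the \emph{uniformity} of the constant $C$ in the stochastic parameter $\boldsymbol{\omega}$. This is guaranteed because both the $\mathcal{A}$-boundedness \eqref{equ:upper-bounded-A} and the POD perturbation bound \eqref{equ:error-MsFEM-MsFEM-POD-for-EVP} hold uniformly in $\boldsymbol{\omega}$; one should additionally confirm that $\lambda$ remains in the uniform range \eqref{equ:bound-of-eigenvalue}, so that the factor $\lambda\chi_1^{-1}$ in the eigenvalue step is controlled independently of $\boldsymbol{\omega}$.
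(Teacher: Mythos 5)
Your proof is correct, and the eigenfunction part coincides with the paper's argument: the same triangle inequality through the intermediate $\psi_{ms}$, combining \eqref{equ:error-direct-MsFEM-EVP} with \eqref{equ:error-MsFEM-MsFEM-POD-for-EVP}. The only divergence is in the eigenvalue step. The paper first splits $|\lambda_{ms}^{pod}-\lambda|\leq|\lambda_{ms}^{pod}-\lambda_{ms}|+|\lambda_{ms}-\lambda|$ and then invokes the quadratic bound \eqref{equ:eigenvalue-bounded-by-H1} on each piece separately, obtaining $\|\psi_{ms}^{pod}-\psi_{ms}\|_1^2+\|\psi_{ms}-\psi\|_1^2\leq C(H^6+\rho)$ directly with no cross term. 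You instead apply the Rayleigh-quotient identity $\lambda_{ms}^{pod}-\lambda=\mathcal{A}_{\lambda}(\boldsymbol{\omega};e,e)$ once to the total error $e=\psi_{ms}^{pod}-\psi$ and absorb the cross term $H^3\sqrt{\rho}$ by Young's inequality. Your route is arguably cleaner in one respect: the identity behind \eqref{equ:eigenvalue-bounded-by-H1} is derived for a normalized approximant measured against the \emph{exact} eigenpair $(\lambda,\psi)$, so the paper's application of it to the pair $(\lambda_{ms}^{pod},\lambda_{ms})$, where neither is exact and $V_{ms}^{pod}\not\subset V_{ms}$, is a slight abuse that your single-identity version avoids entirely. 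The price is the extra Young step, which costs nothing in the final rate. Your remarks on fixing the sign of the discrete eigenfunctions and on the uniformity of the constants in $\boldsymbol{\omega}$ (via \eqref{equ:upper-bounded-A}, \eqref{equ:bound-of-eigenvalue} and the $\boldsymbol{\omega}$-uniform POD bound) are sound points of care that the paper leaves implicit.
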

\begin{proof}
  Since $\|\psi_{ms}^{pod} - \psi\|_1 \leq \|\psi_{ms}^{pod} - \psi_{ms}\|_1 + \|\psi_{ms} - \psi\|_1$ and $\|\psi_{ms}^{pod} - \psi\| \leq \|\psi_{ms}^{pod} - \psi_{ms}\| + \|\psi_{ms} - \psi\|$.  A combination of \eqref{equ:error-direct-MsFEM-EVP} and \eqref{equ:error-MsFEM-MsFEM-POD-for-EVP} yields the error bounds in \eqref{equ:error-eigenstate-msfem-pod-EVP}. Additionally, an application of \eqref{equ:eigenvalue-bounded-by-H1} yields
  \begin{align*}
    |\lambda_{ms}^{pod} - \lambda| &\leq |\lambda_{ms}^{pod} - \lambda_{ms}| + |\lambda_{ms} - \lambda| \\
    &\leq \|\psi_{ms}^{pod} - \psi_{ms}\|_1^2 + \|\psi_{ms} - \psi\|_1^2\\
    &\leq C(H^6 + \rho).
  \end{align*}
  These complete the proof.
\end{proof}

\subsection{Total error}
In the above, we outline the error of MsFEM approximation error in physic space, the truncation error of the model, the qMC approximation error, and the MsFEM-POD approximation error. Combine these errors and we get the following theorem for the total error.
\begin{theorem}
  Suppose \cref{assump:assumption-for-potentials} holds, $s \in \mathds{N}$, $N \in \mathds{N}$ be prime and ${z} \in \mathds{N}^{s}$ be a generating vector constructed using the component-by-component algorithm with weights. The root-mean-square error with respect to the random shift $\boldsymbol{\Delta} \in [0, 1]^s$, of the MsFEM-POD with the qMC method for the minimal eigenvalue $\lambda$ is bounded by
  \begin{equation}
    \sqrt{\mathds{E}_{\boldsymbol{\Delta}}\left[ |\mathds{E}_{\boldsymbol{\omega}}[\lambda] - Q_{N, s}\lambda_{s,ms}^{pod}|^2 \right]} \leq C\left( H^6 + \rho + s^{-2/p+1} + N^{-\alpha} \right).
  \end{equation}
  Meanwhile, for any $\mathcal{G} \in L^2(D; \Omega)$ applying to the ground state $\psi$, the counterpart error approximation of its mean is bounded by
  \begin{equation}
    \sqrt{\mathds{E}_{\boldsymbol{\Delta}}\left[ |\mathds{E}_{\boldsymbol{\omega}}[\mathcal{G}(\psi)] - Q_{N, s}\mathcal{G}(\psi_{s,ms}^{pod})|^2 \right]} \leq C\left( H^3 + \sqrt{\rho} + s^{-2/p+1} + N^{-\alpha} \right).
  \end{equation}
  Here $\alpha$ is defined as the \eqref{equ:definition-alpha}.
  \label{thm:total-error}
\end{theorem}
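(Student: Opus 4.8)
The plan is to split the total error into three independent contributions---dimension truncation, qMC quadrature, and MsFEM-POD discretization---bound each by a result already established, and recombine them using the triangle (Minkowski) inequality in the $L^2(\boldsymbol{\Delta})$ norm. Writing $\lambda_s$ for the exact truncated eigenvalue and $\lambda_{s,ms}^{pod}$ for its MsFEM-POD approximation, I would insert these intermediate quantities to obtain the telescoping decomposition
\begin{equation*}
  \mathds{E}_{\boldsymbol{\omega}}[\lambda] - Q_{N,s}\lambda_{s,ms}^{pod} = \big(\mathds{E}_{\boldsymbol{\omega}}[\lambda] - \mathds{E}_{\boldsymbol{\omega}}[\lambda_s]\big) + \big(\mathds{E}_{\boldsymbol{\omega}}[\lambda_s] - Q_{N,s}\lambda_s\big) + Q_{N,s}\big(\lambda_s - \lambda_{s,ms}^{pod}\big).
\end{equation*}
Since the first summand carries no dependence on the random shift $\boldsymbol{\Delta}$, applying Minkowski's inequality in $L^2(\boldsymbol{\Delta})$ reduces the root-mean-square of the left-hand side to the sum of the root-mean-squares of the three summands, and each may be treated separately.

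For the first (truncation) term I would invoke the \emph{weak} truncation bound $|\mathds{E}_{\boldsymbol{\omega}}[\lambda - \lambda_s]| \leq C_3\, s^{-2/p+1}$ from \Cref{thm:truncated-approximation}; being deterministic, its $L^2(\boldsymbol{\Delta})$ contribution equals $C_3\, s^{-2/p+1}$ exactly. For the second (qMC) term, \Cref{thm:qMC-approximaation} supplies directly $\sqrt{\mathds{E}_{\boldsymbol{\Delta}}[|\mathds{E}_{\boldsymbol{\omega}}[\lambda_s] - Q_{N,s}\lambda_s|^2]} \leq C_{1,\alpha}\,N^{-\alpha}$ with $\alpha$ as in \eqref{equ:definition-alpha}; this is the only place where the parametric regularity of \Cref{lem:parametric-regularity} and the weighted-space apparatus are used. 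The crux is the third (discretization) term: here I would exploit that the MsFEM-POD bound $|\lambda_s(\boldsymbol{\omega}) - \lambda_{s,ms}^{pod}(\boldsymbol{\omega})| \leq C(H^6 + \rho)$ of \Cref{thm:error-MsFEM-POD} holds \emph{uniformly} in $\boldsymbol{\omega}$, with constant independent of the stochastic parameter (cf.\ \Cref{lem:gap-msfem-basis-potential} and \Cref{prop:L2-gap-msfem-pod-basis}). Since $Q_{N,s}$ is a quadrature with nonnegative weights of total mass at most one, it is stable in the supremum norm, whence $|Q_{N,s}(\lambda_s - \lambda_{s,ms}^{pod})| \leq \sup_{\boldsymbol{\omega}}|\lambda_s(\boldsymbol{\omega}) - \lambda_{s,ms}^{pod}(\boldsymbol{\omega})| \leq C(H^6 + \rho)$ for every realization of $\boldsymbol{\Delta}$, so its root-mean-square obeys the same bound. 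Summing the three pieces yields the claimed eigenvalue estimate.

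For the linear functional of the ground state I would repeat the identical decomposition with $\lambda$ replaced by $\mathcal{G}(\psi)$, using the functional truncation bound of \Cref{thm:truncated-approximation} and the corresponding qMC bound of \Cref{thm:qMC-approximaation}, and controlling the discretization term through the boundedness of $\mathcal{G}$ on $L^2(D)$ together with the uniform estimate $\|\psi_{ms}^{pod} - \psi\| \leq C(H^4 + \sqrt{\rho}) \leq C(H^3 + \sqrt{\rho})$ from \Cref{thm:error-MsFEM-POD}. The main obstacle, and essentially the only nontrivial point, is verifying that the physical-space discretization constant in \Cref{thm:error-MsFEM-POD} is genuinely uniform in $\boldsymbol{\omega}$, so that it passes through the quadrature $Q_{N,s}$ unchanged; once that uniformity is in hand, the remainder is a straightforward assembly of bounds already proved.
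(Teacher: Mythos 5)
Your proposal is correct and is essentially the argument the paper intends: the paper gives no explicit proof of \Cref{thm:total-error}, merely stating that the theorem follows by "combining" the truncation, qMC, and MsFEM-POD error bounds, and your telescoping decomposition with Minkowski's inequality in $L^2(\boldsymbol{\Delta})$, together with the sup-norm stability of the equal-weight quadrature $Q_{N,s}$ applied to the $\boldsymbol{\omega}$-uniform discretization bound, is precisely that combination spelled out. Your explicit flagging of the $\boldsymbol{\omega}$-uniformity of the constant in \Cref{thm:error-MsFEM-POD} as the one point requiring verification is a fair and accurate reading of where the paper's argument is thinnest.
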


\section{Numerical experiments}
\label{sec:experiments}
In this section, we numerically check the convergence rates of the proposed method. After that, we investigate the localization of the eigenstates for the Schr\"odinger operator with spatially random potentials. In all cases, we compute the eigenvalues using MATLAB's $eigs$ with the option $smallestabs$.

\subsection{Superconvergence of the MsFEM discretization}
The 1D double-well potential and 2D checkboard potential are adopted to verify the superconvergence rates of the MsFEM method. In these experiments, we fix $\epsilon = 1$, and calculate the reference solution $(\lambda_{ref,l}, \psi_{ref,l})$ ($l = 1, \cdots, 5$) by the FEM with mesh size $h$.

\begin{example}
  Consider the 1D double-well potential $v_0(x) = (x^2 - 4)^2$ over the domain $D = [-4, 4]$. We fix $h = 1/256$ and vary $N_H = 8, 16, 32, 64$ and record the errors $| \lambda_{ref,l} - \lambda_{ms,l} |$, $\| \psi_{ref,l} - \psi_{ms,l} \|$ and $\| \psi_{ref,l} - \psi_{ms,l} \|_1$. As shown in \Cref{fig:comparsion-FEM_MsFEM}, the second-order convergence rates of FEM approximation and the superconvergence rates of the MsFEM approximation are depicted. In this experiment, the minimal eigenvalue and the ground state are calculated.
  \begin{figure}[htbp]
    \centering
    \subfloat[Eigenvalue.]{\includegraphics[width=1.7in]{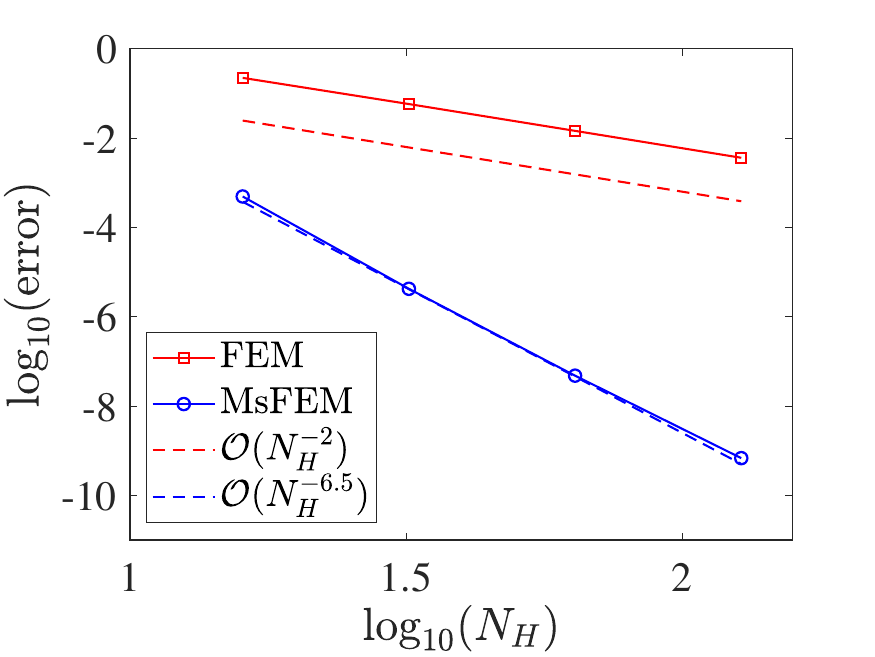}}
    \subfloat[$L^2$-error.]{\includegraphics[width=1.7in]{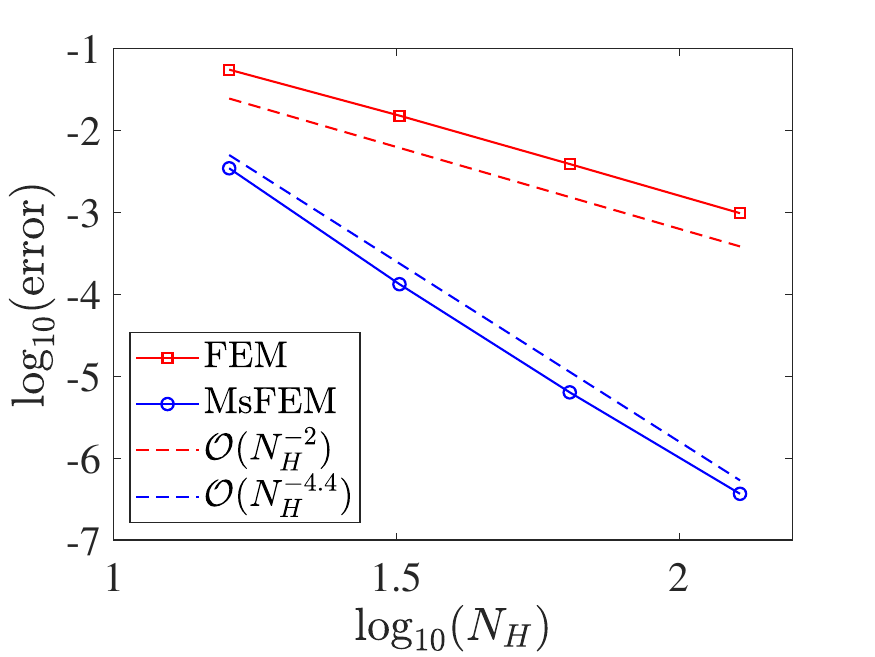}}
    \subfloat[$H^1$-error.]{\includegraphics[width=1.7in]{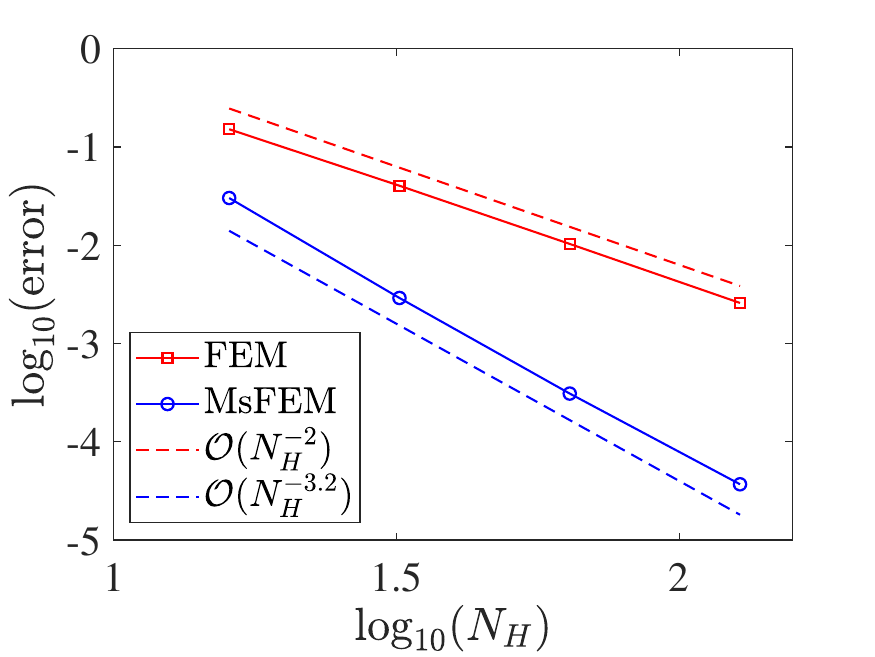}}
    \caption{Numerical convergence rates of the FEM and MsFEM approximation for the EVP of the Schr\"odinger operator with the 1D double-well potential.}
    \label{fig:comparsion-FEM_MsFEM}
  \end{figure}

  Furthermore, we check the approximation error of the MsFEM method for the first five eigenvalues and the corresponding eigenfunctions. Numerical results are depicted in \Cref{tab:5eigenvalues-convergence-double-well}, \Cref{tab:5eigenstates-L2-convergence-double-well} and \Cref{tab:5eigenstates-H1-convergence-double-well}. In \Cref{tab:5eigenstates-L2-convergence-double-well} and \Cref{tab:5eigenstates-H1-convergence-double-well}, the convergence rates of $\| \psi_{ref} - \psi_{ms} \|$ and $\| \psi_{ref} - \psi_{ms} \|_1$ are slight worse than the results in \Cref{fig:comparsion-FEM_MsFEM}. This difference is due to the inclusion of a coarse grid of $N_H = 8$ in both tables.
  \begin{table}[htbp]
    \centering
    \caption{Numerical convergence rates of the error $| \lambda_{ref,l} - \lambda_{ms,l} |$ ($l = 1, \cdots, 5$).}
    \begin{tabular}{||c|c|c|c|c|c||}
      \hline
      $\lambda_h^l$ & $N_H = 16$ & $N_H = 32$ & $N_H = 64$ & $N_H = 128$ & order \\
      \hline
      2.762420126423838 & 4.9166e-04 & 4.2144e-06 & 4.7839e-08 & 6.8088e-10 & -6.48 \\
      2.762436019658617 & 4.9329e-04 & 4.2143e-06 & 4.7835e-08 & 6.8081e-10 & -6.48 \\
      7.988965439736671 & 3.4864e-02 & 1.6993e-04 & 1.7874e-06 & 2.4865e-08 & -6.78 \\
      7.991063271042746 & 3.5019e-02 & 1.7032e-04 & 1.7901e-06 & 2.4897e-08 & -6.78 \\
      12.596293528481384 & 1.6578e-01 & 9.4183e-04 & 9.1390e-06 & 1.2373e-07 & -6.77 \\
      \hline
    \end{tabular}
    \label{tab:5eigenvalues-convergence-double-well}
  \end{table}
  \begin{table}[htbp]
    \centering
    \caption{Numerical convergence rates of the error $\| \psi_{ref,l} - \psi_{ms,l} \|$ ($l = 1, \cdots, 5$).}
    \begin{tabular}{||c|c|c|c|c|c|c||}
      \hline
      $l$ & $N_H = 8$ & $N_H = 16$ & $N_H = 32$ & $N_H = 64$ & $N_H = 128$ & order \\
      \hline
      1 & 1.2924e-02 & 3.4672e-03 & 1.3316e-04 & 6.3567e-06 & 3.6648e-07 & -3.93 \\
      2 & 1.4693e-02 & 3.4861e-03 & 1.3316e-04 & 6.3565e-06 & 3.6646e-07 & -3.97 \\
      3 & 4.1625e-01 & 4.0202e-02 & 8.9026e-04 & 3.9427e-05 & 2.2222e-06 & -4.50 \\
      4 & 4.6330e-01 & 4.0305e-02 & 8.9142e-04 & 3.9462e-05 & 2.2237e-06 & -4.53 \\
      5 & 7.5693e-01 & 1.0805e-01 & 2.2233e-03 & 9.0568e-05 & 4.9787e-06 & -4.46 \\
      \hline
    \end{tabular}
    \label{tab:5eigenstates-L2-convergence-double-well}
  \end{table}
  \begin{table}[htbp]
    \centering
    \caption{Numerical convergence rates of the error $\| \psi_{ref,l} - \psi_{ms,l} \|_1$ ($l = 1, \cdots, 5$).}
    \begin{tabular}{||c|c|c|c|c|c|c||}
      \hline
      $l$ & $N_H = 8$ & $N_H = 16$ & $N_H = 32$ & $N_H = 64$ & $N_H = 128$ & order \\
      \hline
      1 & 5.8489e-02 & 3.0251e-02 & 2.9061e-03 & 3.0933e-04 & 3.6920e-05 & -2.79 \\
      2 & 6.1666e-02 & 3.0283e-02 & 2.9061e-03 & 3.0931e-04 & 3.6918e-05 & -2.80 \\
      3 & 1.6051e-00 & 2.9316e-01 & 1.8688e-02 & 1.8953e-03 & 2.2312e-04 & -3.29 \\
      4 & 1.6491e-00 & 2.9374e-01 & 1.8709e-02 & 1.8968e-03 & 2.2327e-04 & -3.30 \\
      5 & 2.7166e-00 & 7.2761e-01 & 4.4559e-02 & 4.2944e-03 & 4.9794e-04 & -3.22 \\
      \hline
    \end{tabular}
    \label{tab:5eigenstates-H1-convergence-double-well}
  \end{table}
\end{example}

\begin{example}
  In this case, we adopt a checkerboard potential as depicted in \Cref{fig:2d-checkboard-convergence}(A). Over the domain $D = [-0.5, 0.5]^2$, the potential is set to a checkboard with squares of size $2^{-4}$, which results in $16\times 16$ squares. The values of sub-squares alternate between 0 and 2. We then calculate the reference solution with a uniform mesh size $h = 1/512$. Here we check the convergence rates of the minimal eigenvalue and the ground state, and the results are shown in \Cref{fig:2d-checkboard-convergence}.

  It is shown that for the discontinuous potential, both the FEM and MsFEM manage to retain near-optimal convergence of the minimal eigenvalue. However, for the ground state calculations, the MsFEM successfully preserves the convergence rates while the FEM fails. This showcases the superior resilience of the MsFEM to approximate eigenfunctions for discontinuous potentials.
  \begin{figure}[htbp]
    \centering
    \subfloat[Potential.]{\includegraphics[width=1.6in]{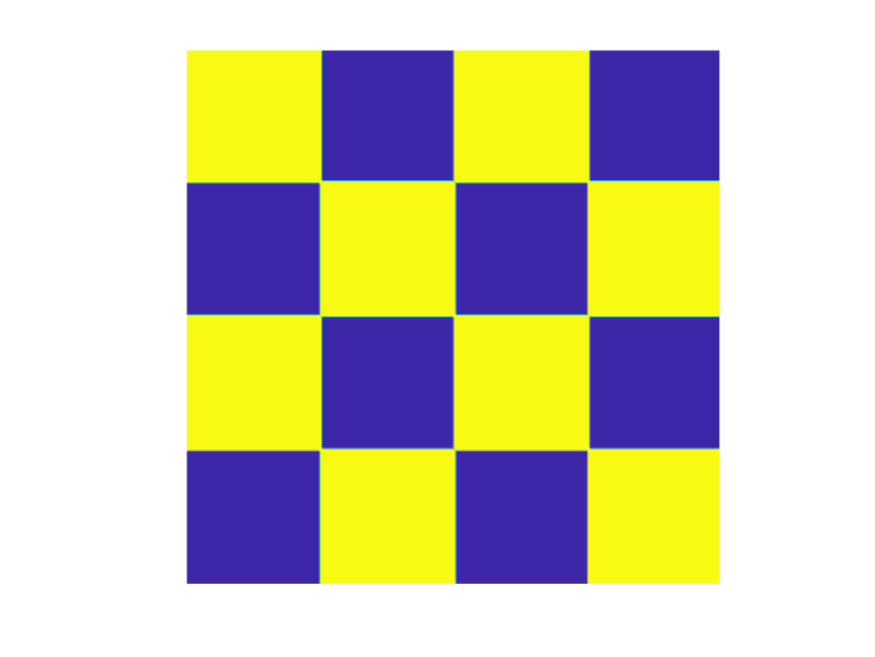}}
    \subfloat[Eigenvalue.]{\includegraphics[width=1.6in]{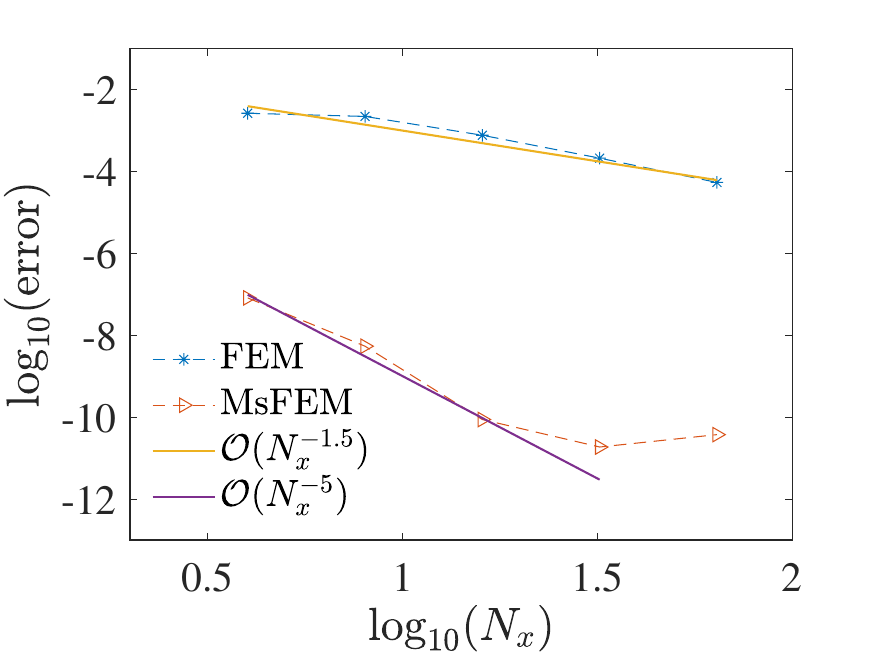}}
    \subfloat[Eigenfunction.]{\includegraphics[width=1.6in]{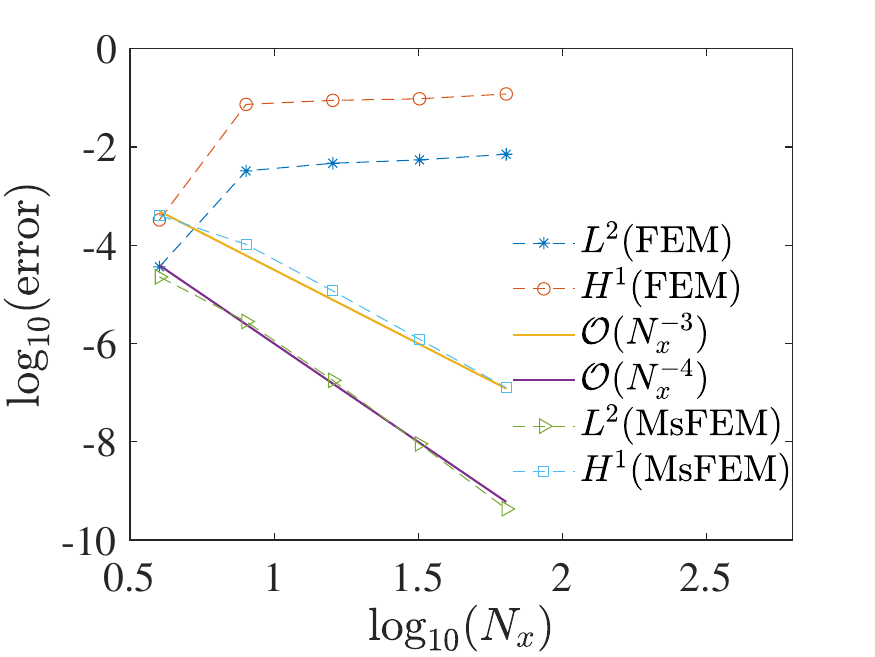}}
    \caption{The checkboard potential and the numerical convergence rates of the FEM and MsFEM methods.}
    \label{fig:2d-checkboard-convergence}
  \end{figure}
\end{example}

\subsection{Random potentials}
Next, we consider the parameterized potentials
\begin{equation}
    V(x, \boldsymbol{\omega}_s) = 1.0 + \sum_{j=1}^s \frac{\sin(j\pi x)}{1+(j\pi)^q}\omega_j,
    \label{equ:1D-random-potential}
\end{equation}
where $q$ controls the decaying rates of the high-frequency components.
For $q > 1$, we have for all $j \in \mathds{N}$, $\|v_j\|_{\infty} = \frac{1}{1+(j\pi)^q} < \frac{1}{(j\pi)^q}$ and hence $\sum_{j=1}^{\infty}\|v_j\|_{\infty} < \zeta(q)/\pi^q$. In turn, the value of $p$ in the \Cref{thm:truncated-approximation} can be in the interval $(1/q, 1)$.

The reference solutions are computed by
\begin{equation*}
  \mathds{E}[\lambda_k] = \frac{1}{N}\sum_{i=1}^{N}\lambda_k(\omega^i), \quad  \mathds{E}[\psi_k] = \frac{1}{N}\sum_{i=1}^{N}\psi_k(\omega^i),
\end{equation*}
where $(\lambda_k, \psi_k)$ are the FEM solution on a fine mesh. The empirical expectations of numerical solutions $(\mathds{E}[\lambda_{ms,k}], \mathds{E}[\psi_{ms,k}])$ are calculated similarly. Since the convergence rate of eigenvalues will be mainly concerned, we define the absolute error $$\mathrm{error}_k = | \mathds{E}[\lambda_{ms,k}] - \mathds{E}[\lambda_k] |,$$ where "$\mathrm{error}$" specifically represent the case of $k = 1$.

\begin{example}[Estimation of sample size for the POD basis.]
  \label{eg:estimation-Q}
  In the online stage, the multiscale basis associated with the random potentials is approximated by the POD basis. The samples for constructing the POD basis are crucial to the quality of the reduced basis. In this example, we choose different numbers of qMC and MC samples and record the error as the number of samples varies, to determine the appropriate number of random samples. We fix $q = 0$ and $N = 4000$ to generate the random potentials. For the 1D case, the coarse mesh size is $H = \frac 1{16}$, and we set $s = 64$ and compute the reference solution by the FEM with $N_h = 2048$ over the interval $[-1, 1]$. For the 2D case, the coarse mesh size is $H = \frac 1{32}$, and we set $s = 8$ and compute the reference solution by the FEM with $N_h = 128$ over the domain $[-\frac 12, \frac 12]^2$. In \Cref{tab:error-of-vary-Q}, we record the errors as the sampling number $Q$ varies. The results show that when $Q$ is of order 100, the qMC sample provides the best approximation.
  \begin{table}[htbp]
    \centering
    \caption{The error of the MsFEM-POD method with different sampling numbers in the offline stage.}
    \begin{tabular}{||c|c|c|c|c|c|c||}
      \hline
      $Q$ & 10 & 50 & 100 & 200 & 1000 \\
      \hline
      qMC, 1D & 2.0615e-03 & 1.0235e-03 & 1.3442e-03 & 1.2612e-03 & 1.1749e-03 \\
      MC, 1D & 2.8781e-03 & 1.8927e-03 & 1.8767e-04 & 1.6333e-03 & 1.2932e-03 \\
      \hline
      qMC, 2D & 1.6164e-04 & 3.6407e-04 & \textbf{7.3920e-07} & \textbf{7.1088e-07} & - \\
      MC, 2D & 1.1322e-04 & 3.7447e-04 & 3.7448e-04 & 3.7453e-04 & - \\
      \hline
    \end{tabular}
    \label{tab:error-of-vary-Q}
  \end{table}

  In addition, we plot the basis functions constructed by the optimal problems \eqref{equ:optimal-problem-objective}-\eqref{equ:optimal-problem} and \eqref{equ:optimal-problem2}, respectively, where 200 qMC samples are generated to construct the POD basis. We test the potentials parameterized by the random samples chosen in $\Omega_0$ and $\Omega/\Omega_0$, respectively. As shown in \Cref{fig:1d_basis_functions}, we get the accurate multiscale basis by solving the reduced optimal problems \eqref{equ:optimal-problem2}.
\begin{figure}[htbp]
  \centering
  \subfloat[1D basis functions and the error distribution over ${[-1, 1]}$.]{\includegraphics[width=1.6in]{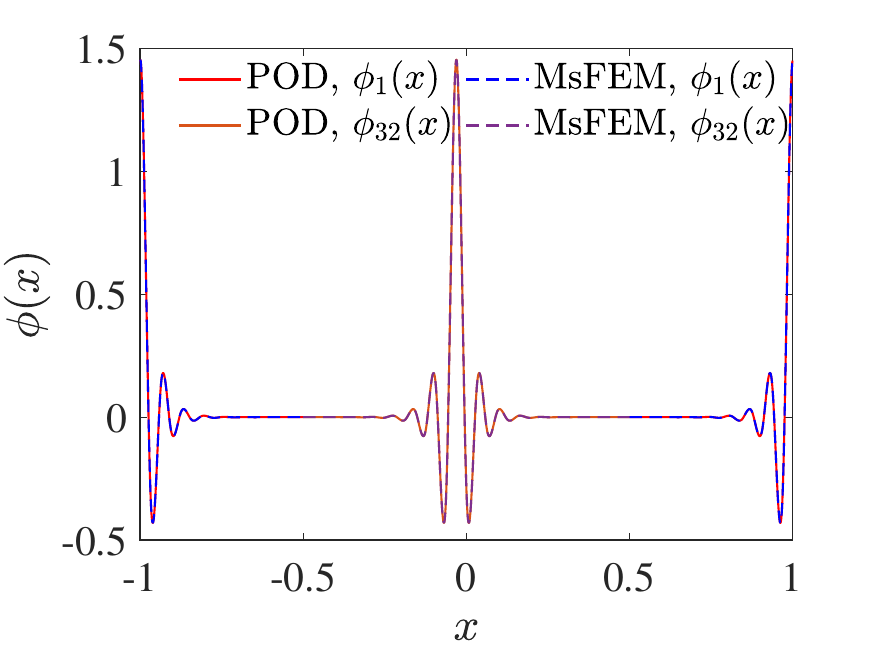}\includegraphics[width=1.6in]{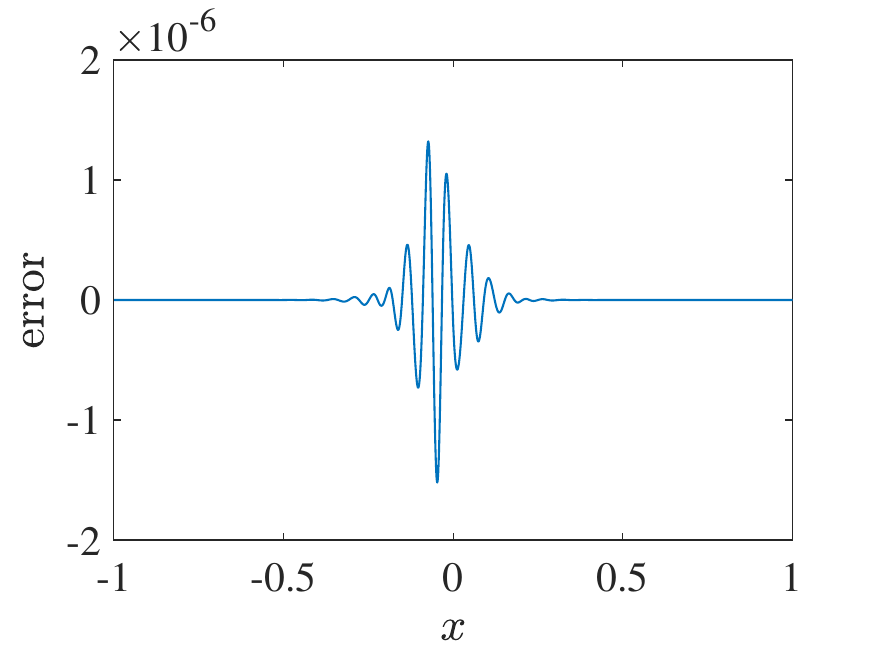}\includegraphics[width=1.6in]{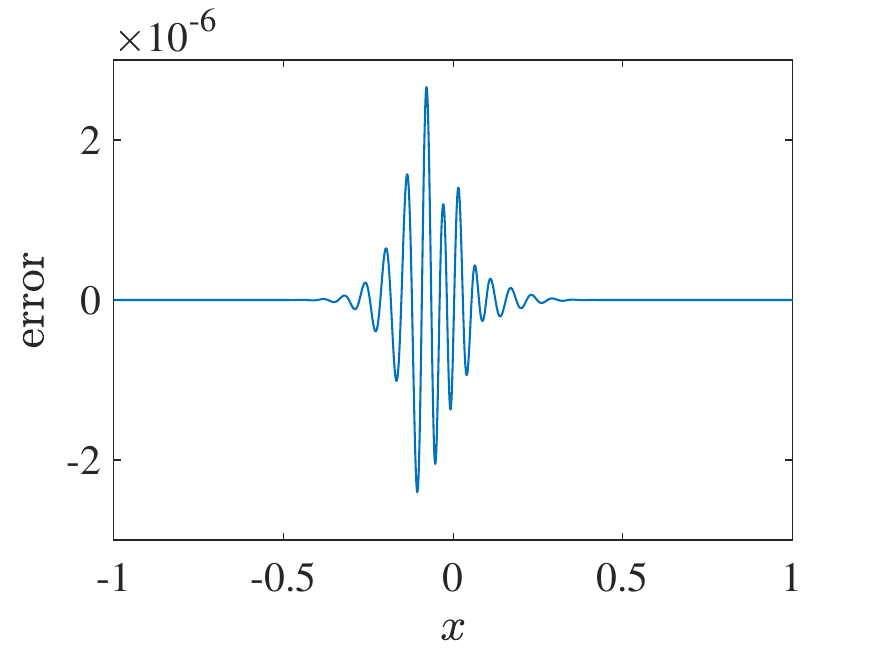}}\\
  \subfloat[2D basis functions and the error distribution over ${[-\frac 12, \frac 12]^2}$.]{\includegraphics[width=1.6in]{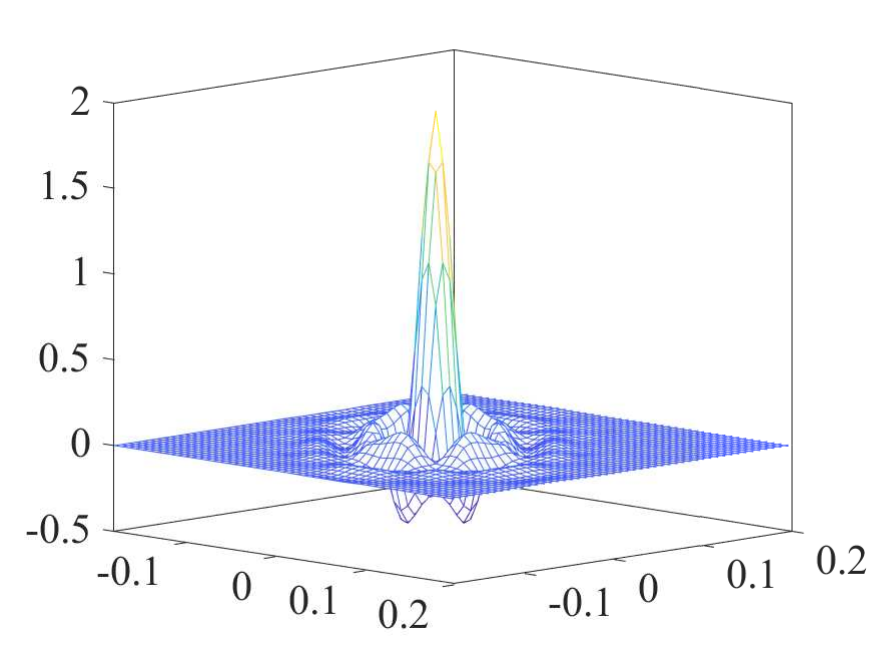}\includegraphics[width=1.6in]{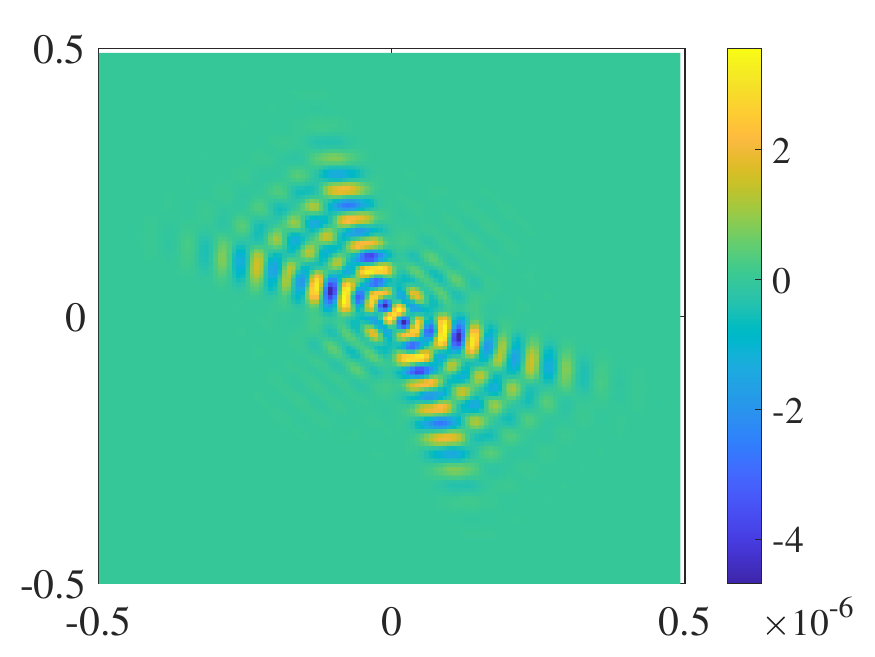}\includegraphics[width=1.6in]{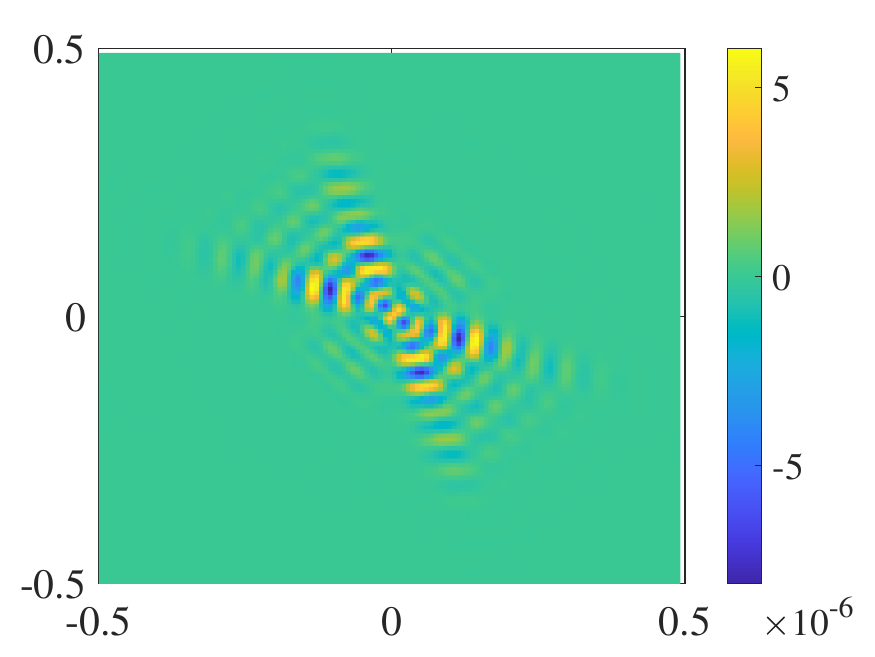}}\\
  \caption{The basis functions and the error between the multiscale basis solved by \eqref{equ:optimal-problem-objective}-\eqref{equ:optimal-problem} and \eqref{equ:optimal-problem2}. 1st column: sketches of basis functions. 2nd column: the case for $\omega \in \Omega_0$. 3rd column: the case for $\omega \in \Omega/\Omega_0$.}
  \label{fig:1d_basis_functions}
\end{figure}
\end{example}

Next, we check the convergence of the proposed MsFEM-POD method. We fix $Q = 200$ and $m_i = 3$ for all $i = 1, \cdots, N_H$ in the rest of examples. Notice that the POD error $\rho$ is not discussed here. Interested readers can refer to \cite{Kunisch2001} for more details.

\begin{example}
  In this example, we check the convergence rate of the MsFEM-POD with respect to $s$. Over the interval $[-1, 1]$, we take $N_h = 2048$ and $s = 512$ to generate the reference solution, where $8000$ qMC samples are generated. For the MsFEM and MsFEM-POD methods, we set $N_H = 64$. As shown in \Cref{fig:1d-convergence-wrt-s}, we record the error as varies $s = 2, 4, 8, 16, 32, 64, 128, 256$. Here different values $q = \frac 43, 3$ are tested.
  When $q = 3$, the reference solution is $\lambda = 0.985033892103644$, and the solution computed by the MsFEM is $0.999475730933365$. A significant error $5.8349$e-09 is then produced, which can be observed for $s \ge 16$ as in \Cref{fig:1d-convergence-wrt-s}. Similar errors can be observed with $q = \frac 43$. Besides, the POD error is also depicted when $q = \frac 43$ and $s = 256$.
  \begin{figure}[htbp]
    \centering
    \includegraphics[width=2.in]{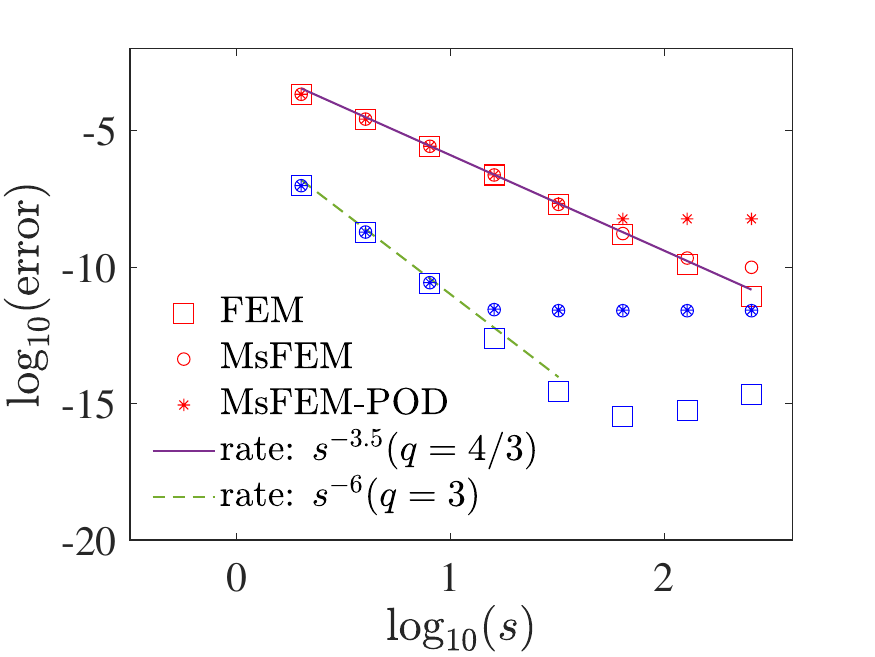}
    \caption{Numerical convergence rates with respect to $s$. The red and blue symbols denote the results corresponding to $q = 4/3$ and $q = 3$, respectively.}
    \label{fig:1d-convergence-wrt-s}
  \end{figure}

  Next, we verify the convergence of MsFEM-POD in the physical space. The reference solution is computed by the FEM with $q = \frac 43$, $s = 8$, $N = 8000$ and $N_h = 2048$. We vary $H = \frac 12, \frac 14, \cdots, \frac 1{64}$, and compare the convergence rates of FEM and MsFEM-POD as in~\Cref{fig:1D-convergence-CPU-time}(A). Meanwhile, the corresponding CPU time is also compared in \Cref{fig:1D-convergence-CPU-time}(B). The results demonstrate that the MsFEM-POD method offers an efficient approach for solving this class of random EVP.
  \begin{figure}[htbp]
    \centering
    \subfloat[Convergence rate.]{\includegraphics[width=2.in]{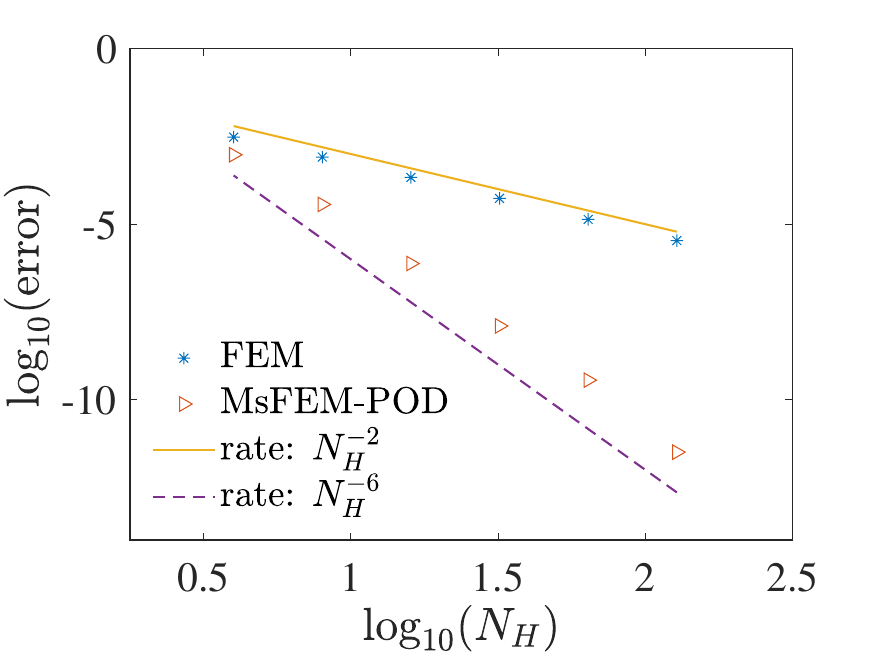}}
    \subfloat[CPU time.]{\includegraphics[width=2.in]{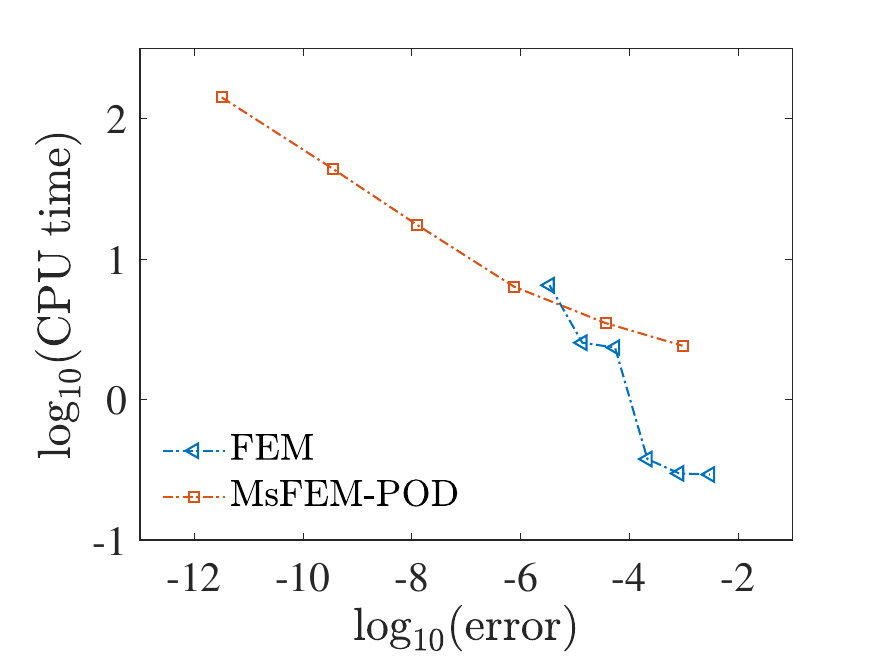}}
    \caption{Numerical convergence rates of FEM and MsFEM-POD in physic space and the comparison of the CPU time.}
    \label{fig:1D-convergence-CPU-time}
  \end{figure}

  At last, we compare the convergence rates of the qMC and MC methods. Both the FEM and MsFEM-POD are employed with the same computational setups. As shown in \Cref{fig:convergence-wrt-N}, the convergence rate of the qMC method reaches almost first-order in the random space.
  \begin{figure}[htbp]
    \centering
    \includegraphics[width=2.in]{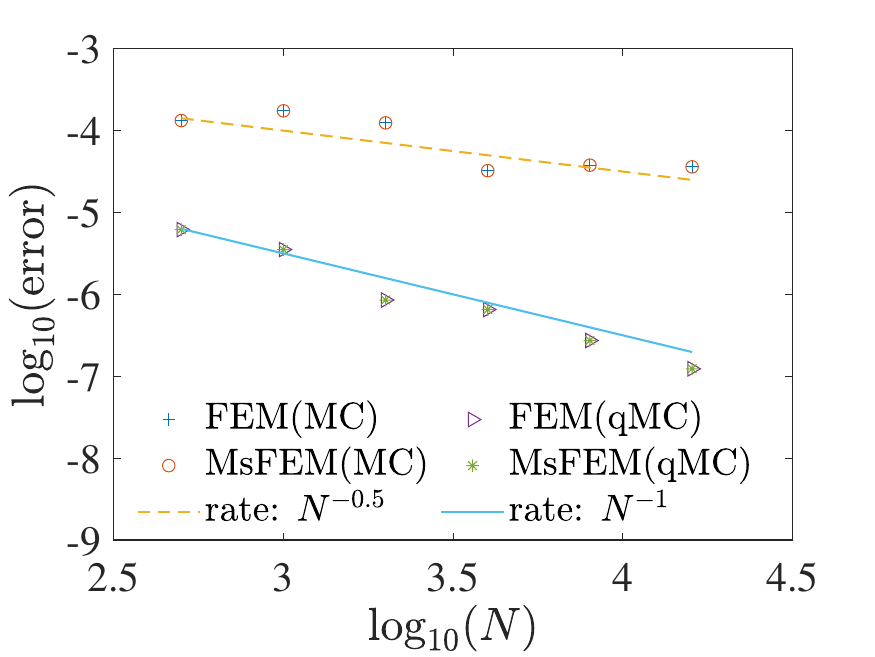}
    \caption{Numerical convergence rates of the FEM and the MsFEM-POD with respect to $N$. The "MsFEM" in the figure denotes the results provided by the MsFEM-POD method.}
    \label{fig:convergence-wrt-N}
  \end{figure}
\end{example}


\subsection{Localization of eigenfunctions}
At the end of this section, we employ the random potentials over the domain $[0, 1]^d$ ($d = 1,2$):
\begin{equation}
  V(x, \boldsymbol{\omega}_s) = v_0(x) + \sigma\sum_{j=1}^s\frac{1}{j^{q}}\sin(j\pi x)\omega_j,
  \label{equ:1d-potential-for-localization}
\end{equation}
where $\sigma$ denotes the strengthness of randomness. The 2D counterpart is
\begin{equation}
  V(\mathbf{x}, \boldsymbol{\omega}_s) = v_0(\mathbf{x}) + \sigma\sum_{j=1}^s\frac{1}{j^{q}}\sin(j\pi x)\sin(j\pi y)\omega_j.
  \label{equ:2d-potential-for-localization}
\end{equation}
Here we let $v_0(\mathbf{x})$ be a constant that ensures the minimal eigenvalues to be positive.
When $q \neq 0$, the high-frequency components of the potential are decaying with power rates. For $q = 0$, as $s \rightarrow \infty$, the potential converges to the spatially white noise. In the following experiments, we will check the reliability of the proposed method for both scenarios.

\begin{example}
  We set $q = 2$ and thus the amplitudes of high-frequency components are decaying very fast. Other parameters are $s = 32$, $h = 1/3200$, $\epsilon = 1.0$, and $N = 20000$.  For the MsFEM-POD method, we adopt $H = 1/10$ and compute all eigenvalues, while we compute the first 10 eigenvalues of the FEM approximated form. With 64 cores paralleling, the computational time of FEM is 341.17 seconds, while the MsFEM-POD method takes 29.72 seconds.
\begin{table}[htbp]
  \centering
  \caption{The comparison of mean and variance of first 5 eigenvalues computed by the FEM and MsFEM-POD methods.}
  \begin{tabular}{||c|c|c|c|c|c||}
    \hline
    & $\lambda_{1}$ & $\lambda_2$ & $\lambda_3$ & $\lambda_4$ & $\lambda_5$ \\
    \hline
    mean (FEM) & 0.9979 & 20.7075 & 20.7723 & 79.9490 & 79.9652 \\
    mean (MsFEM-POD) & 0.9979 & 20.7075 & 20.7724 & 79.9694 & 79.9856 \\
    error & 6.36e-08 & 5.31e-05 & 5.39e-05 & 2.04e-02 & 2.04e-02 \\
    \hline
    variance (FEM) & 0.1353 & 0.1359 & 0.1351 & 0.1353 & 0.1353 \\
    variance (MsFEM-POD) & 0.1353 & 0.1359 & 0.1351 & 0.1355 & 0.1354 \\
    error & 9.69e-11 & 1.49e-06 & 1.46e-06 & 1.32e-04 & 1.32e-04 \\
    \hline
  \end{tabular}
  \label{tab:1d-q2-mean-variance}
\end{table}

As illustrated in \Cref{tab:1d-q2-mean-variance}, the relative error of the mean between the FEM solution and the MsFEM-POD solution reaches an order of $10^{-4}$. Moreover, the MsFEM-POD method provides an extremely accurate solution for the minimal eigenvalue.
Besides, we record the means of the eigenvalues and the error as the $\epsilon$ varies. In \Cref{fig:1d_localization}, when we reduce the value of $\epsilon$, the accurate solution also can be produced by the MsFEM-POD method. This infers that for the random potential \eqref{equ:1d-potential-for-localization} with $q = 2$, the required dofs of the MsFEM-POD method are independent of the semiclassical parameter $\epsilon$.
\begin{figure}[htbp]
  \centering
  \includegraphics[width=2.in]{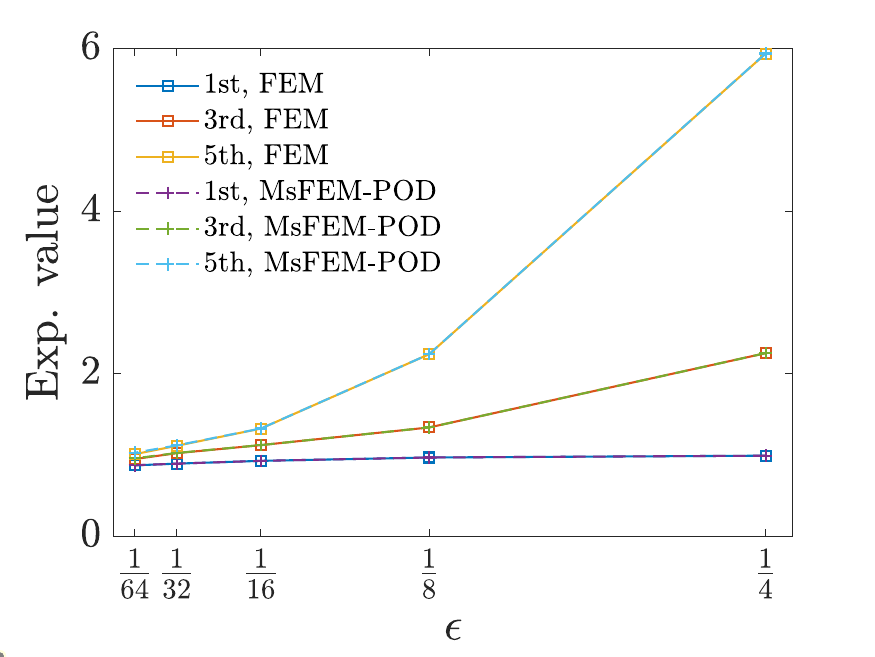}
  \includegraphics[width=2.in]{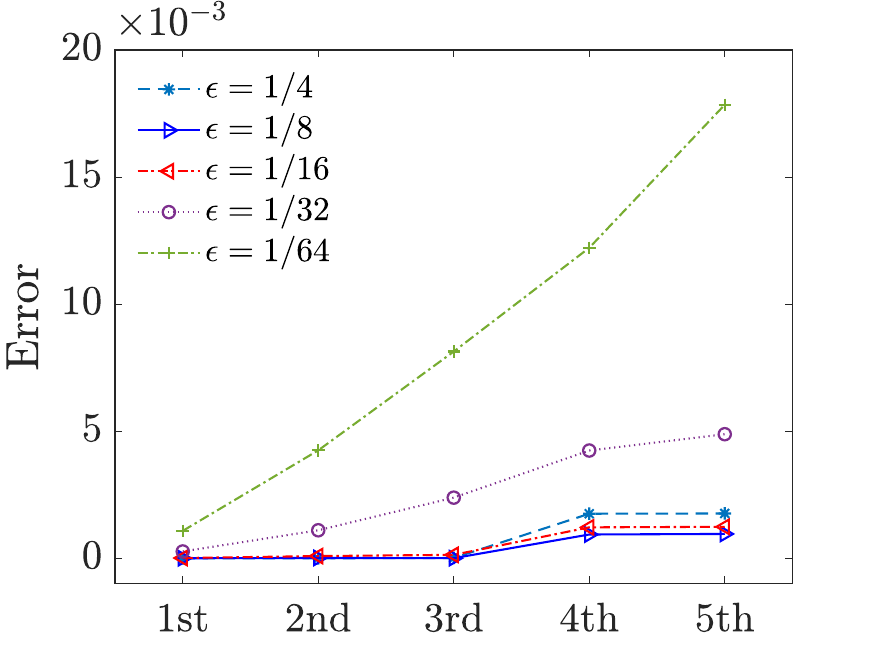}
  \caption{The first 5 eigenvalues computed by the FEM and MsFEM-POD for different semiclassical constant $\epsilon$.}
  \label{fig:1d_localization}
\end{figure}

We next consider the 2D case. We set $s = 32$, and the mesh size $h = \frac{1}{320}$ and $H = \frac{1}{10}$. The means and variances of the minimal eigenvalue as $\epsilon$ varies are recorded in~\Cref{tab:2d-mean-variance-eps-varies}. Meanwhile, we compare the ground states computed by the FEM and MsFEM-POD as shown in~\Cref{tab:2d-ground-states-eps-varies}. The numerical results indicate the effectiveness of our method.
\begin{table}[htbp]
  \centering
  \caption{The means and variances of the minimal eigenvalues computed by the FEM and MsFEM-POD methods for different $\epsilon$.}
  \begin{tabular}{||c|c|c|c|c|c||}
    \hline
    $\epsilon$ & $\frac 14$ & $\frac{1}{8}$ & $\frac{1}{16}$ & $\frac{1}{32}$ & $\frac{1}{64}$ \\
    \hline
    mean (FEM) & 0.9941 & 0.9777 & 0.9400  & 0.9031 & 0.8793 \\
    mean (MsFEM-POD) & 0.9941 & 0.9777 & 0.9400 & 0.9034 & 0.8807 \\
    error & 4.73e-06 & 5.25e-06 & 1.48e-05 & 2.73e-04 & 1.37e-03 \\
    \hline
    variance (FEM) & 1.38e-02 & 1.48e-02 & 1.89e-02 & 2.20e-02 & 2.34e-02 \\
    variance (MsFEM-POD) & 1.38e-02 & 1.48e-02 & 1.89e-02 & 2.20e-02 & 2.33e-02 \\
    error & 1.10e-06 & 1.77e-06 & 2.64e-06 & 2.00e-06 & 4.99e-05 \\
    \hline
  \end{tabular}
  \label{tab:2d-mean-variance-eps-varies}
\end{table}

\begin{figure}[htbp]
  \centering
  \subfloat[$\epsilon = \frac 14$.]{\includegraphics[width=1.6in]{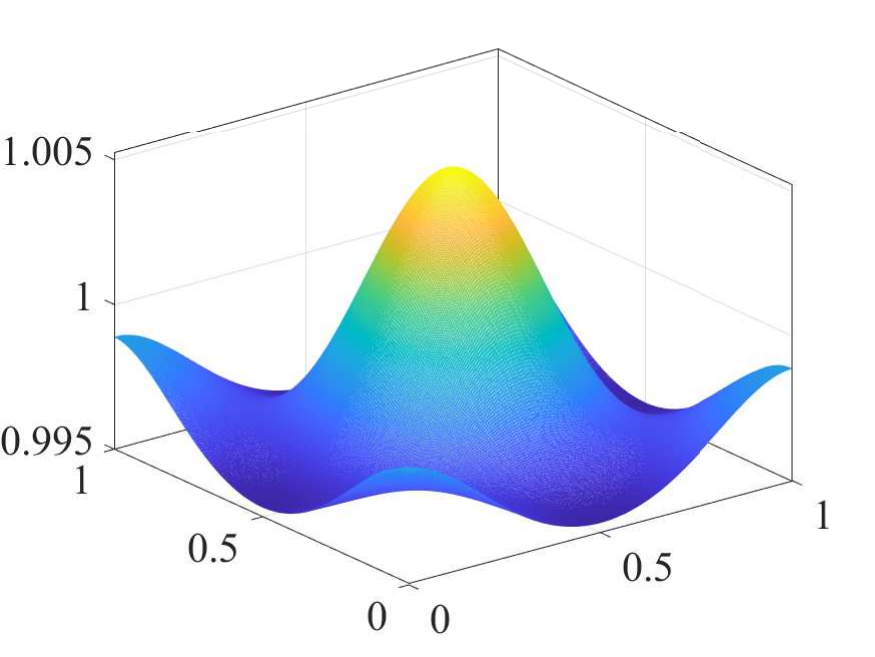}\includegraphics[width=1.6in]{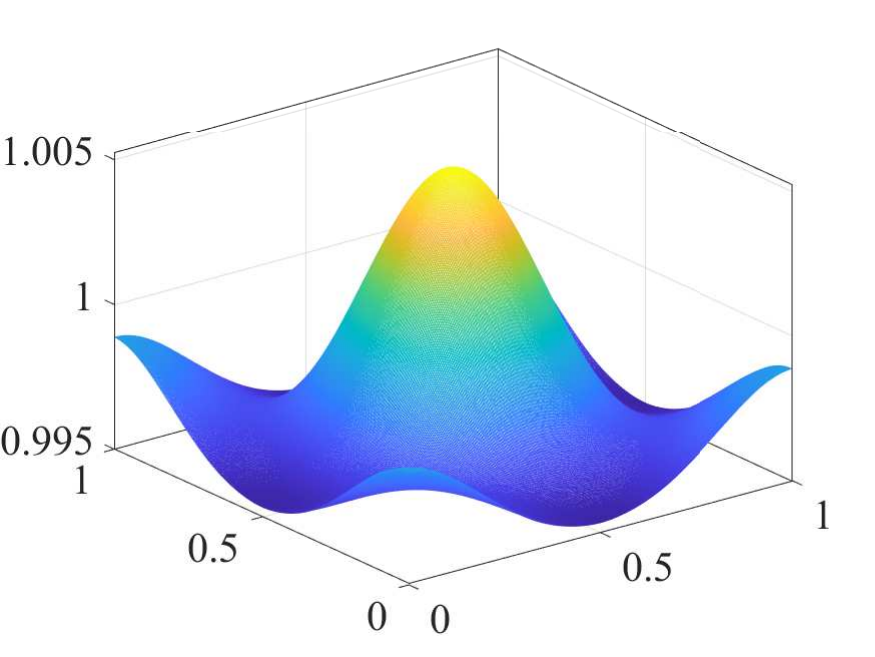}
  \includegraphics[width=1.6in]{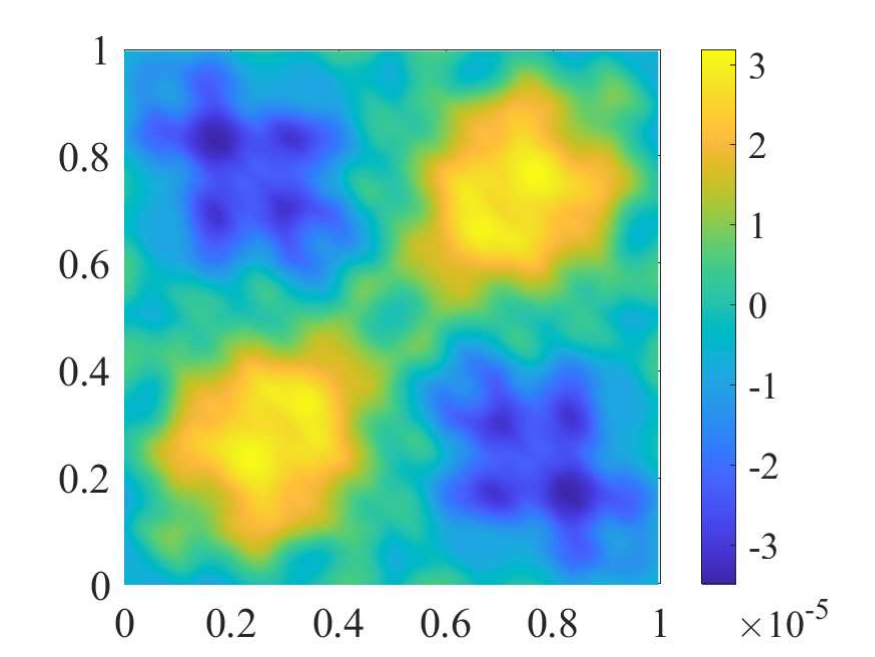}}\\
  \subfloat[$\epsilon = \frac {1}{64}$.]{\includegraphics[width=1.6in]{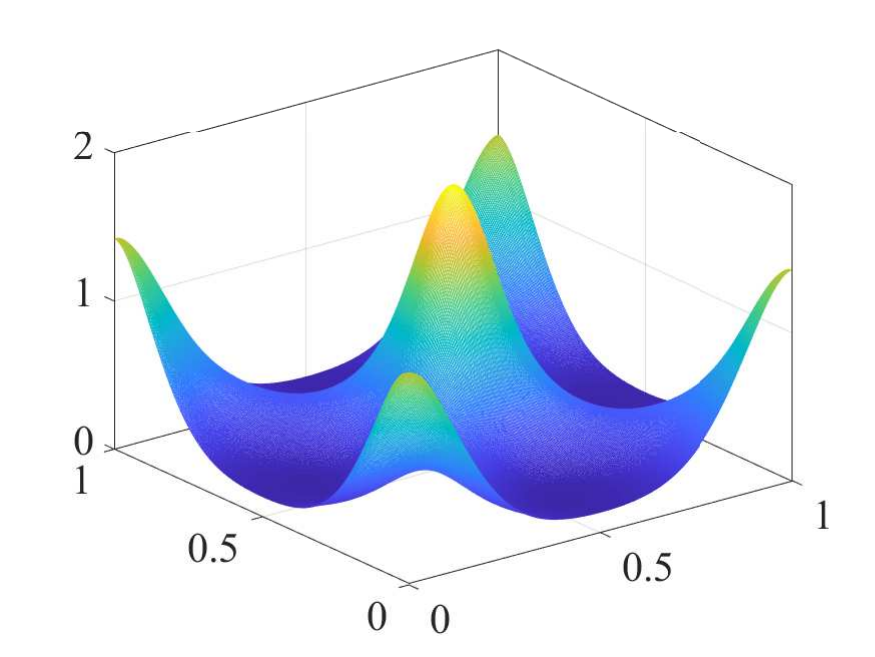}\includegraphics[width=1.6in]{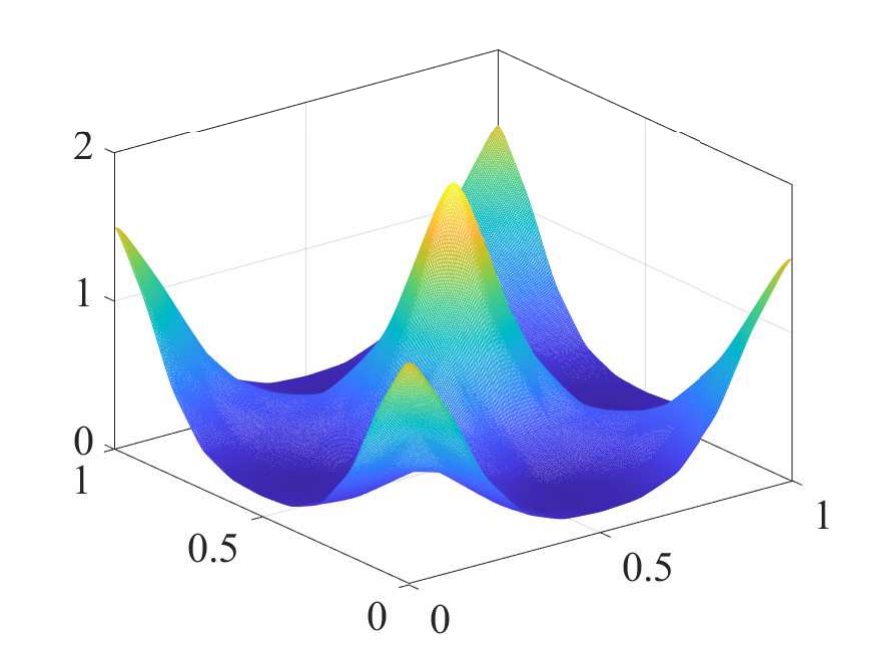}\includegraphics[width=1.6in]{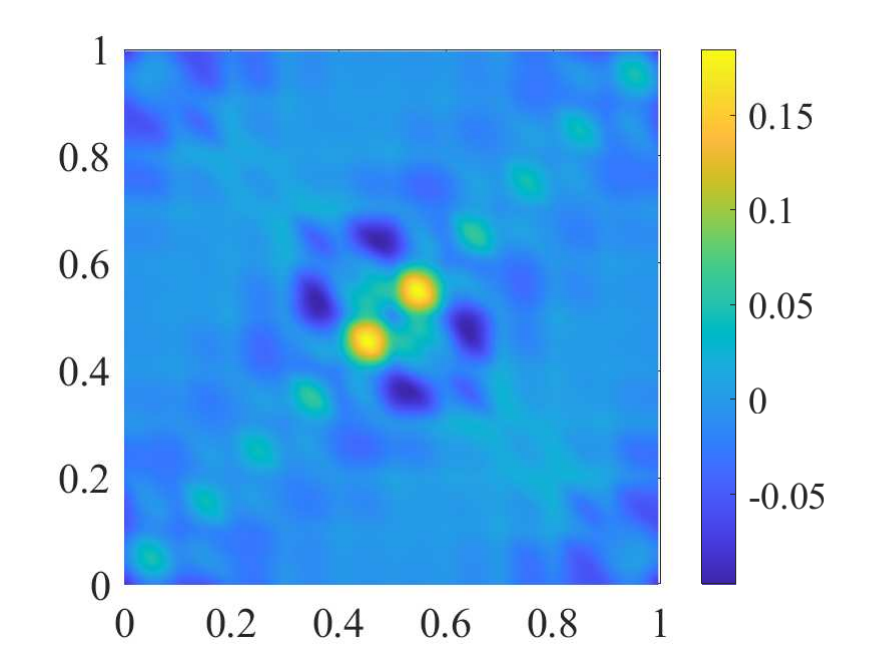}}
  \caption{The 2D ground states for different $\epsilon$. 1st column: FEM solution; 2nd column: MsFEM-POD solution; 3rd column: error distribution.}
  \label{tab:2d-ground-states-eps-varies}
\end{figure}
\end{example}

\begin{example}
  Here we consider $q = 0$ and simulate the spatial white noise. In this system, the localized eigenfunctions would be stabilized. For the 1D parameterized potential \eqref{equ:1d-potential-for-localization}, we fix $s = 256$, $\epsilon = \frac 1{16}$, and $h = \frac 1{15000}$. Numerical tests show that $H$ should be slightly smaller than $\epsilon$ but is independent of $s$. We set the coarse mesh $H = \frac 1{30}$ and obtain the localized eigenfunction as in \Cref{fig:1D-localization-eigenfunctions}.
  \begin{figure}[htbp]
    \centering
    \includegraphics[width=1.6in]{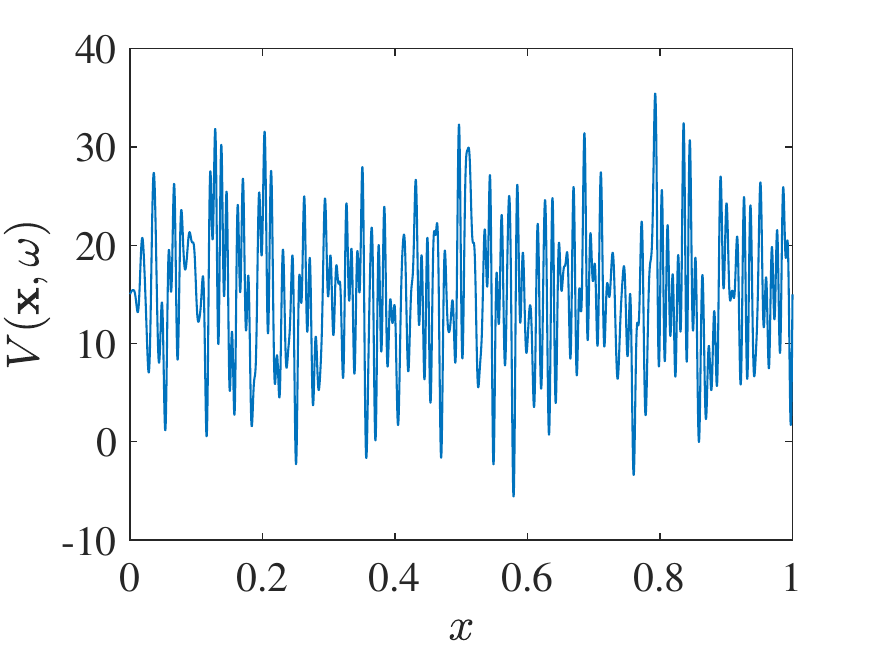}\includegraphics[width=1.6in]{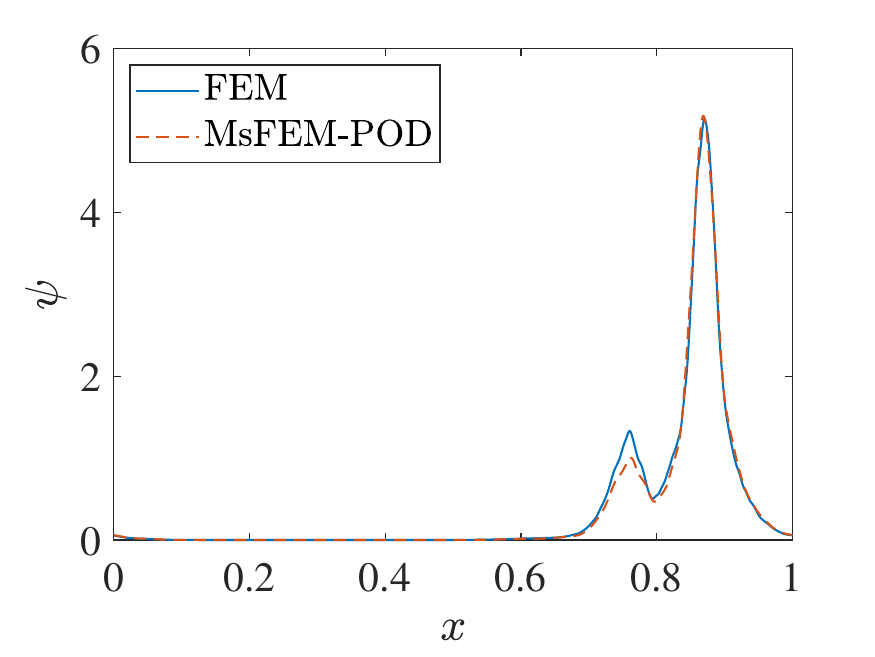}\includegraphics[width=1.6in]{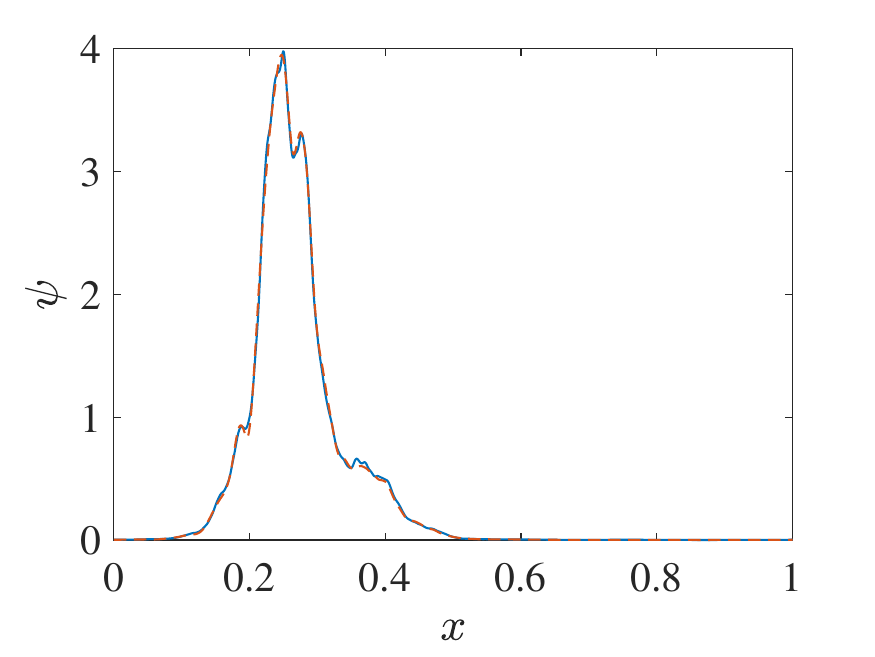}\\
    \includegraphics[width=1.6in]{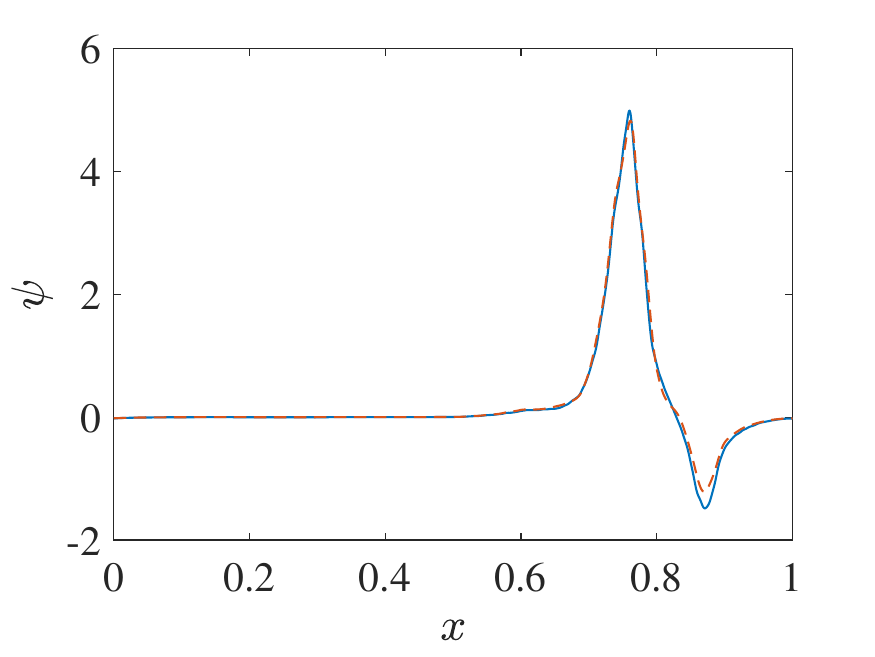}\includegraphics[width=1.6in]{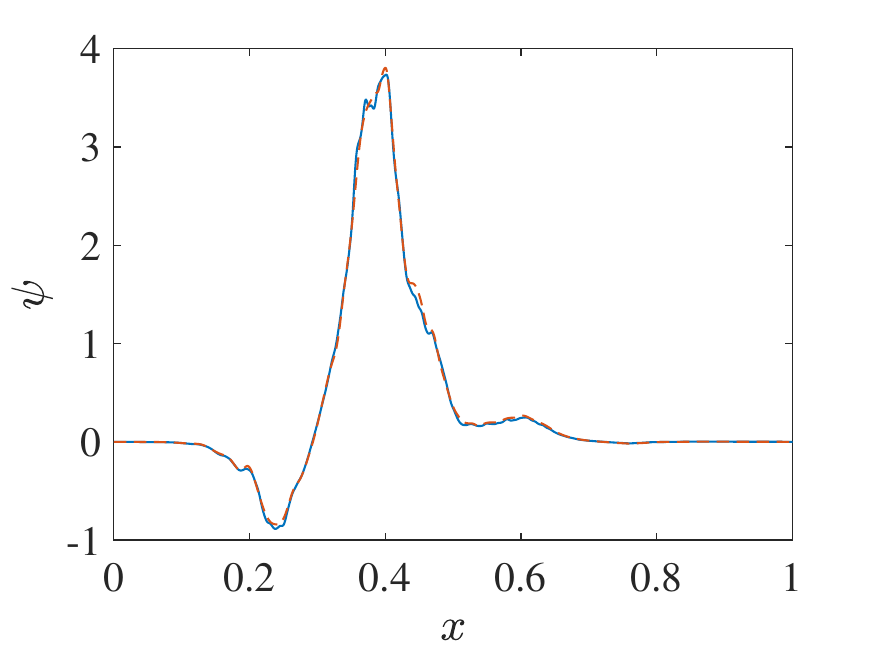}\includegraphics[width=1.6in]{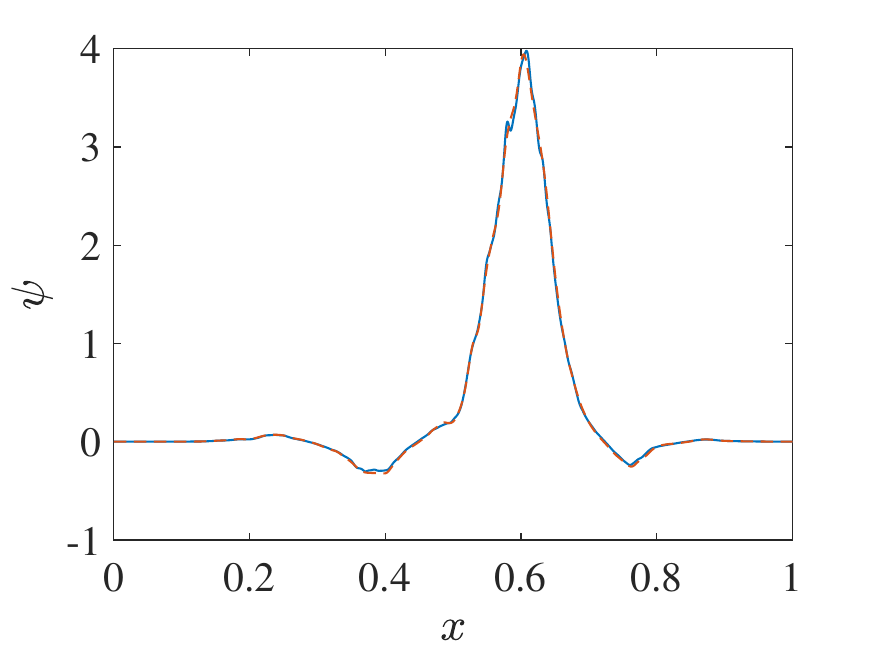}
    \caption{A realization of the random potential and the localized eigenfunctions corresponding to the first five minimal eigenvalues.}
    \label{fig:1D-localization-eigenfunctions}
  \end{figure}

Next, for the 2D problem, due to the memory limitation, we fix $s = 64$, and set $h = \frac 1{400}$ to ensure that the high-frequency features of the parameterized potential can be captured. The localization of the eigenfunctions is simulated with the coarse mesh size $H = \frac{1}{20}$ as in \Cref{fig:2d-localization-random-potential-q-0}. Here the results computed by the FEM are not depicted, but we depict the first five eigenvalues to demonstrate the reliability of the MsFEM-POD method as in \Cref{tab:2d-eigenvalues_error}.
\begin{figure}[htbp]
  \centering
  \includegraphics[width=1.6in]{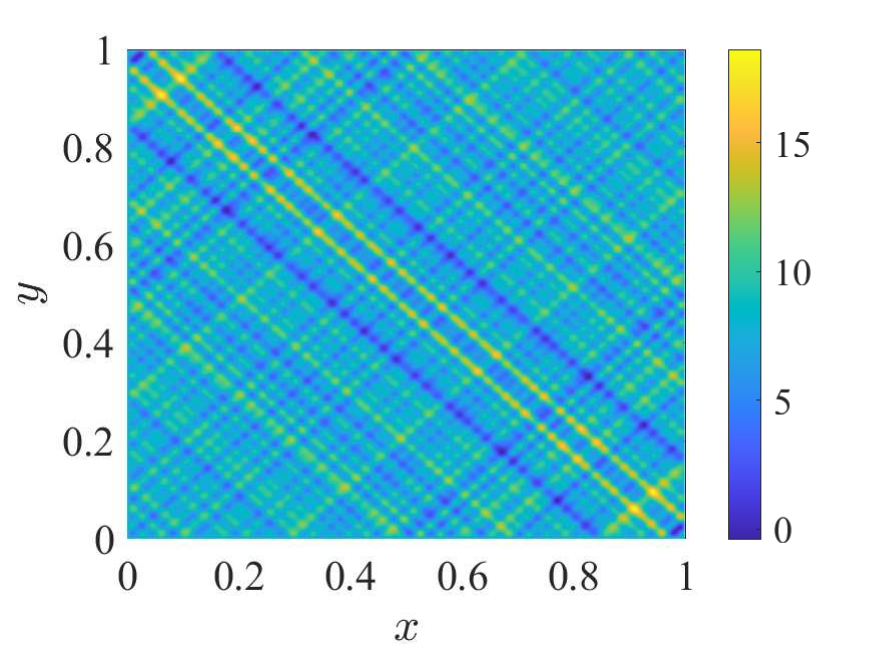}\includegraphics[width=1.6in]{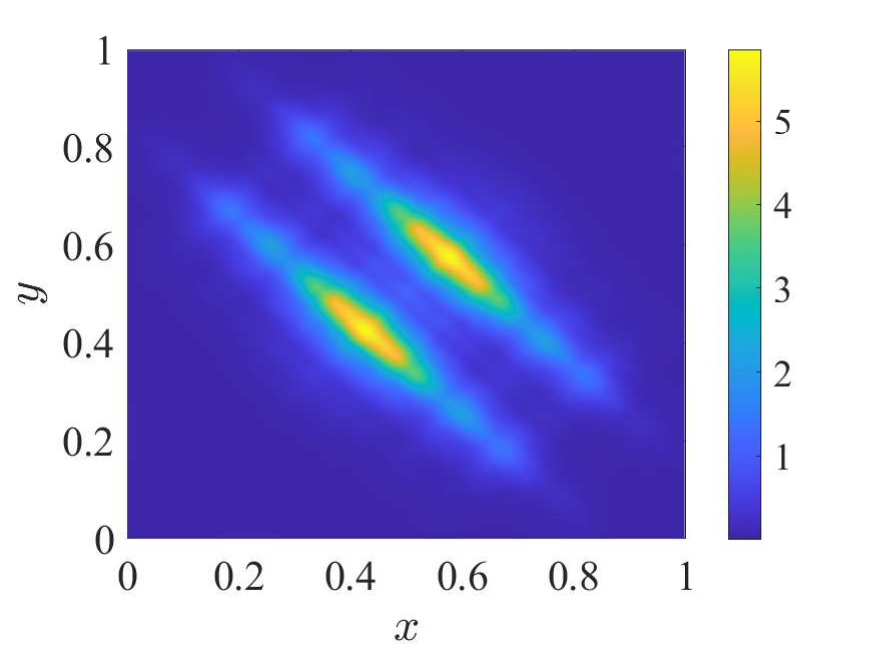}\includegraphics[width=1.6in]{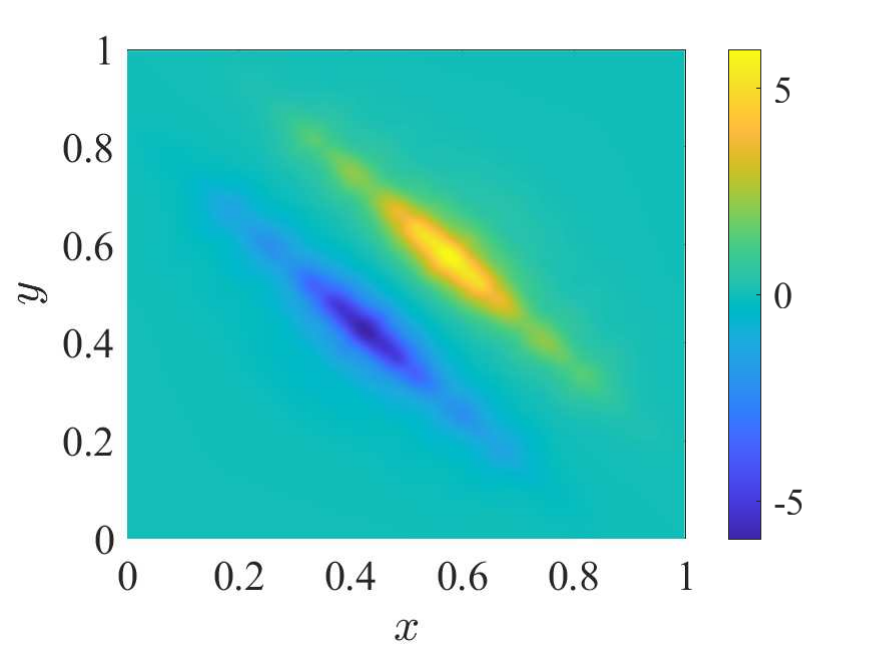}\\
  \includegraphics[width=1.6in]{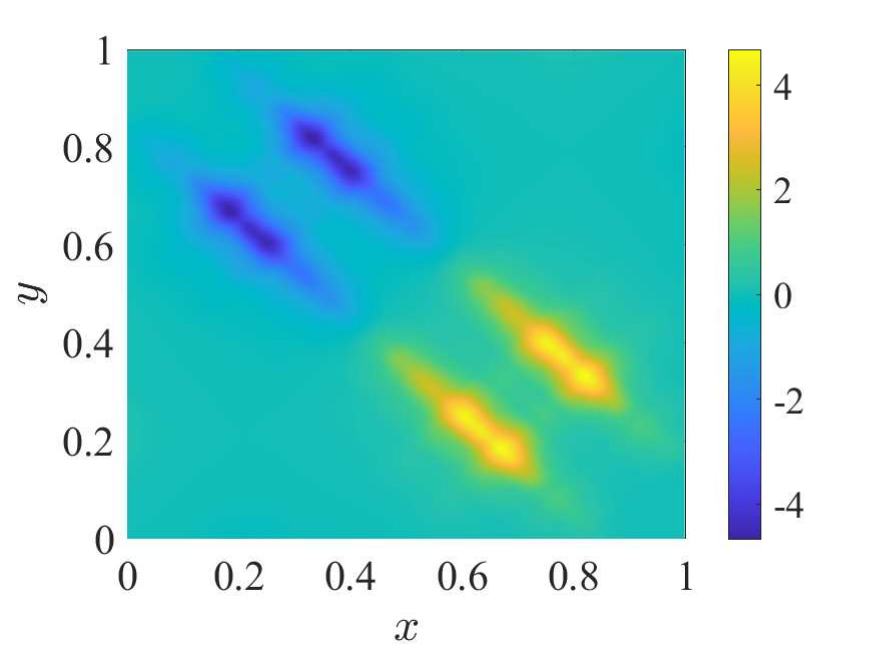}\includegraphics[width=1.6in]{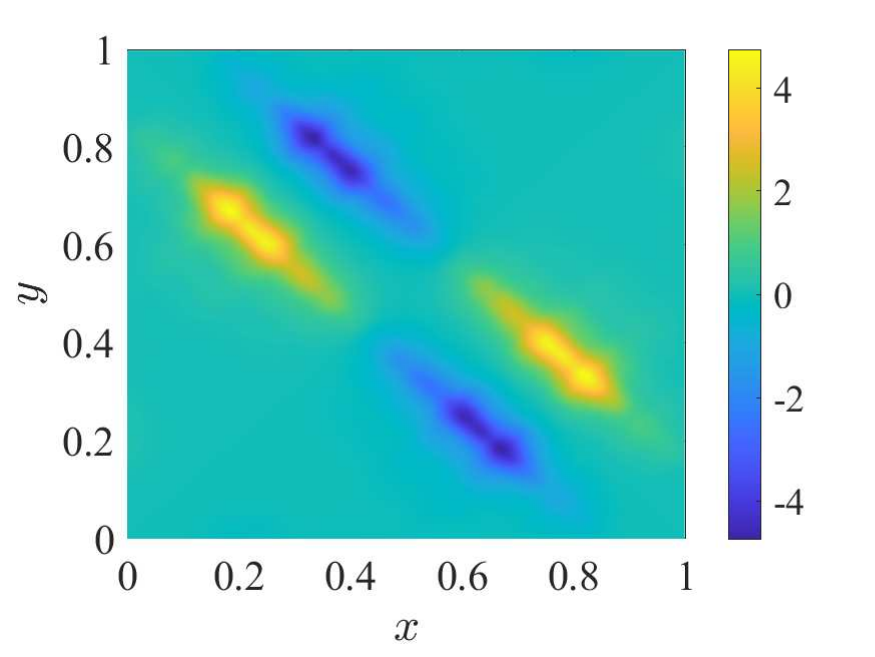}\includegraphics[width=1.6in]{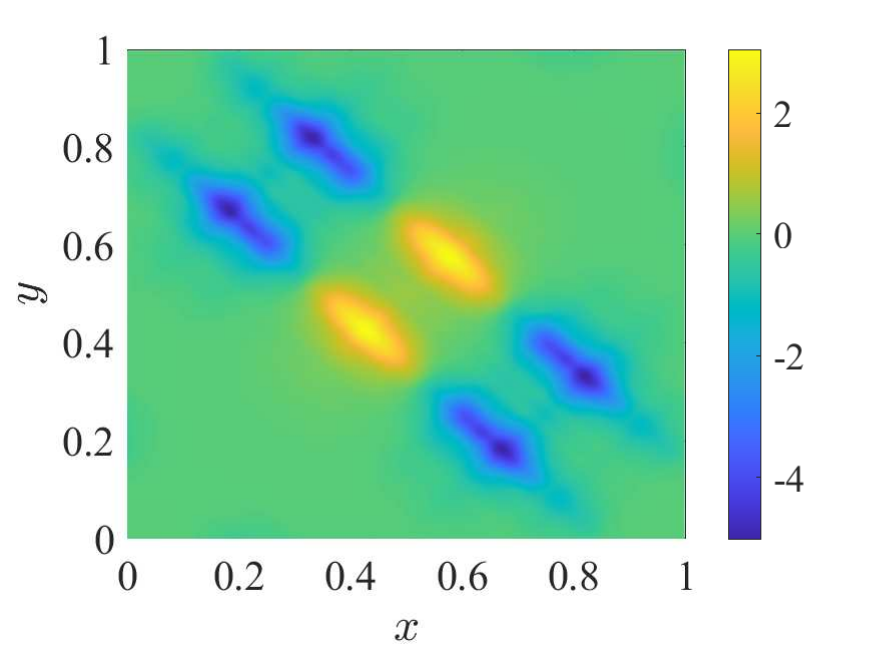}
  \caption{A realization of the 2D parameterized random potential and the localized eigenfunctions computed by the MsFEM-POD method. The corresponding eigenvalues are shown in~\Cref{tab:2d-eigenvalues_error}.}
  \label{fig:2d-localization-random-potential-q-0}
\end{figure}
\begin{table}[htbp]
  \centering
  \caption{The first five eigenvalues computed by the FEM and MsFEM-POD methods.}
  \begin{tabular}{|c|ccccc|}
    \hline
    FEM & 6.7399 & 6.7636 & 6.8224 & 6.8461 & 6.8542 \\
    MsFEM-POD & 6.7819 & 6.8055 & 6.8779 & 6.9018 & 6.9144 \\
    \hline
    absolute error & 4.1999e-02 & 4.1919e-02 & 5.5432e-02 & 5.5686e-02 & 6.0148e-02 \\
    relative error & 6.2314e-03 & 6.1977e-03 & 8.1249e-03 & 8.1340e-03 & 8.7753e-03 \\
    \hline
  \end{tabular}
  \label{tab:2d-eigenvalues_error}
\end{table}
\end{example}

\begin{remark}
    When we set $q = 0$ in the parameterized random potentials \eqref{equ:1d-potential-for-localization} and \eqref{equ:2d-potential-for-localization}, the bounds of the random potentials directly depend on the truncated dimension. For this class of problems, the conditions outlined in \Cref{assump:assumption-for-potentials}(2) and (3) cannot be sustained, resulting in the lack of convergent eigenvalues and eigenfunctions. Nevertheless, when the condition \Cref{assump:assumption-for-potentials}(1) is satisfied, i.e. $H < \epsilon$, the localization of eigenfunctions is simulated accurately with lower computational cost, which demonstrates the application potential of the proposed MsFEM-POD method on simulating complex quantum systems governed by semiclassical random Schr\"odinger operators.
\end{remark}

\section{Conclusions}
\label{sec:conclusions}
In this paper, we present a multiscale reduced method for solving the eigenvalue problem of the semiclassical random Schr\"odinger operators. The random potential of the Schr\"odinger operator is parameterized by a truncated series with random parameters. We introduce the MsFEM to approximate the resulting problem, where the order-reduced multiscale basis functions are constructed by an effective approach based on the POD method. Moreover, we prove an error estimate for the proposed method, where the approximation error is a combined form consisting of the model truncation error, the MsFEM approximation error, the POD truncation error, and the integral approximation error of the quasi-Monte Carlo method. We also conduct numerical experiments to validate the error estimate. Using the proposed method, we accurately simulate the Anderson localization of eigenfunctions for spatially random potentials. The results demonstrate that our approach provides a practical and efficient solution for simulating complex quantum systems governed by semiclassical random Schr\"odinger operators.

\section*{Acknowledgements}
The research of Z. Zhang is supported by the National Natural Science Foundation of China (projects 92470103 and 12171406), the Hong Kong RGC grant (projects 17307921 and 17304324), the Outstanding Young Researcher Award of HKU (2020-21), and seed funding from the HKU-TCL Joint Research Center for Artificial Intelligence. The simulations are performed using research computing facilities offered by Information Technology Services, University of Hong Kong.

\section*{Declaration of interest}
The authors report no conflict of interest.

\bibliographystyle{amsplain}
\bibliography{references}

\end{document}